\theoremstyle{plain} 
\newtheorem{theorem}{\indent\sc Theorem}[section] 
\newtheorem{lemma}[theorem]{\indent\sc Lemma}
\newtheorem{corollary}[theorem]{\indent\sc Corollary}
\newtheorem{proposition}[theorem]{\indent\sc Proposition}
\theoremstyle{definition} 
\newtheorem{remark}[theorem]{\indent\sc Remark}
\newtheorem{example}[theorem]{\indent\sc Example}
\newcommand{\ellOne}{
\begin{picture}(0,0)
\setlength{\unitlength}{1.08358pt}
{\thicklines
\put(6.05,0){\circle{12}}
\put(6.05,0){\circle{16}}
}
\qbezier(13.5,0)(18,0)(22,0)
\put(6.15,0){\makebox(0,0){$1$}}
\end{picture}}
\newcommand{\hidariashiEllOne}{
\begin{picture}(0,0)
\setlength{\unitlength}{1.08358pt}
{\thicklines
\put(7.55,0){\circle{12}}
\put(7.55,0){\circle{16}}
}
\put(7.65,0){\makebox(0,0){$1$}}
\end{picture}}
\newcommand{\migite}{
\begin{picture}(0,0)
\setlength{\unitlength}{1.08358pt}
{\thicklines
\put(6.05,0){\circle{12}}
}
\qbezier(12,0)(17,0)(22,0)
\end{picture}}
\newcommand{\hidariashi}{
\begin{picture}(0,0)
\setlength{\unitlength}{1.08358pt}
{\thicklines
\put(6.05,0){\circle{12}}
}
\end{picture}}
\newcommand{\hidariaOne}{
\begin{picture}(0,0)
\setlength{\unitlength}{1.08358pt}
{\thicklines
\put(6.05,0){\circle{12}}
}
\put(6,0){\makebox(0,0){$1$}}
\end{picture}}
\newcommand{\migitesan}{
\begin{picture}(0,0)
\setlength{\unitlength}{1.08358pt}
{\thicklines
\put(6.05,0){\circle{12}}
}
\qbezier(12,0)(17,0)(22,0)
\put(6,0){\makebox(0,0){$3$}}
\end{picture}}
\newcommand{\FLEight}{
\begin{picture}(141,50)
\setlength{\unitlength}{1.08358pt}
{\thicklines
\put(96,37.5){\circle{12}}
}
\qbezier(96,21.5)(96,26.5)(96,31.3)
%
\put(0,15){\migite}
\put(22.5,15){\migite}
\put(45,15){\migite}
\put(67.5,15){\migite}
\put(90,15){\migite}
\put(112.5,15){\migite}
\put(135,15){\hidariashi}
\end{picture}}
\newcommand{\Eight}{
\begin{picture}(141,50)
\setlength{\unitlength}{1.08358pt}
{\thicklines
\put(96,37.5){\circle{12}}
}
\qbezier(96,21.5)(96,26.5)(96,31.3)
%
\put(0,15){\migite}
\put(22.5,15){\migite}
\put(45,15){\migite}
\put(67.5,15){\migite}
\put(90,15){\migite}
\put(112.5,15){\migite}
\put(135,15){\hidariashi}
\end{picture}}
\newcommand{\EightM}{
\begin{picture}(141,50)
\setlength{\unitlength}{1.08358pt}
{\thicklines
\put(6.05,15){\circle{12}}
}
\qbezier(12,15)(17,15)(22,15)
\put(6,15){\makebox(0,0){$4$}}
{\thicklines
\put(73.5,37.5){\circle{12}}
}
\qbezier(73.5,21.5)(73.5,26.5)(73.5,31.3)
\put(22.5,15){\migite}
\put(45,15){\migite}
\put(67.5,15){\migite}
\put(90,15){\migite}
\put(112.5,15){\migite}
\put(135,15){\migite}
\put(157.5,15){\hidariashi}
\end{picture}}
\newcommand{\V}{
\begin{picture}(68,50)
\setlength{\unitlength}{1.08358pt}
{\thicklines
\put(28.5,37.5){\circle{12}}
}
\qbezier(28.5,21.5)(28.5,26.5)(28.5,31.3)
\put(0,15){\migite}
\put(22.5,15){\migite}
\end{picture}}
\def\address#1#2{\begingroup
\noindent\parbox[t]{7.8cm}{%
\small{\scshape\ignorespaces#1}\par\vskip1ex
\noindent\small{\itshape E-mail address}%
\/: #2\par\vskip4ex}\hfill%
\endgroup}%
\title{\uppercase{Pencils of genus two curves on rational surfaces}} 
\author{
\textsc{Shinya Kitagawa} 
}
\date{} 
\long\def\@makecaption#1#2{%
\vskip\abovecaptionskip
\sbox\@tempboxa{#1. #2}
\ifdim \wd\@tempboxa >\hsize%
#1 #2\par
\else
\global \@minipagefalse
\hb@xt@\hsize{\hfil\box\@tempboxa\hfil}%
\fi
\vskip\belowcaptionskip}
\begin{document}
\setlength{\baselineskip}{6.5mm}

\maketitle

\footnote{ 
2000 \textit{Mathematics Subject Classification}.
Primary 14J26; Secondary 14D06.
}
\footnote{ 
\textit{Key words and phrases}. 
Rational surfaces, fibrations, genus two curves.
}

\begin{abstract}
We consider relatively minimal fibrations of curves of genus two on rational surfaces 
whose Picard numbers are not maximal. 
By birational morphisms, such fibred surfaces are interpreted as pencils of plane curves. 
We show that only four are canonical, 
among a variety of possible models. 
For each canonical pencil, we give an example with trivial Mordell-Weil group. 
\end{abstract}
%
%
\section{Introduction}
The theory of the Mordell-Weil lattices are sufficiently developed by 
Oguiso and Shioda in \cite{OS} for minimal elliptic rational surfaces. 
In their work, the even unimodular root lattice $E_8$ of rank eight played very important role 
as the predominant frame. 
For example, it was shown that the Mordell-Weil group is trivial if and only if 
there exists a singular fibre of type $\mathrm{II}^*$ in the sense of Kodaira \cite{Kodai63} 
whose dual graph contains $E_8$ as a subgraph. 
The lattice $E_8$ also appears in another application by Shioda \cite{Shioda91} to describe 
a hierarchy of deformations of rational double points.

In this paper, we consider fibred rational surfaces of genus two over $\mathbb{C}$ 
in order to look for right candidates for the ``frame lattices'' in this case. 
Here, a fibred rational surface of genus two means a smooth projective rational surface 
$X$ together with a relatively minimal fibration $f:X\to \mathbb{P}^1$ 
whose general fibre $F$ is a smooth projective curve of genus two. 
It is shown in Proposition~\ref{section} that $f$ has a section. 
Hence we can always associate to $f$ the Mordell-Weil group (or lattice) 
by applying the general machinery of Shioda~\cite{Shioda99}. 
We regard a section of $f$ as a curve on $X$, and call it a $(-1)$-{\it section} of $f$ 
if the self-intersection number is equal to $(-1)$. 
To clarify the structure of the Mordell-Weil lattice, 
we choose a ruling on $X$ and its relatively minimal model $\Sigma_d$ 
carefully so that we get a natural $\mathbb{Z}$-basis of $\mathrm{NS}(X)$ 
(the N\'eron-Severi group) which gives us a simple presentation of $F$. 
This is done by choosing a birational morphism $X\to\Sigma_d$ 
which contracts step by step a $(-1)$-curve whose intersection number with $F$ 
is the smallest among all $(-1)$-curves.

Let $\rho(X)$ denote the Picard number of $X$. 
Then it can be shown that $\rho(X)\leq14$. 
When $\rho(X)=14$ and the Mordell-Weil group of $f$ is non-trivial, as an above procedure, 
it is not so hard to see that there are 
twelve $(-1)$-curves each of which meets $F$ at one point and one $(-1)$-curve meeting $F$ at two points 
such that, by contracting them all, we get a pencil of plane quartic curves whose base points 
are resolved to get $|F|$. 
If the group is trivial, then $X$ has a unique ruling and we can only take 
$\Sigma_2$ or $\Sigma_3$ as its minimal model, 
while there actually exists some $f$'s not admitting such a model 
when the Mordell-Weil group is non-trivial (cf.~\cite{Kitagawa5}). 
In other words, $\mathbb{P}^2$ with images become such models of more $f$'s 
than $\Sigma_2$ and $\Sigma_3$.

When $\rho(X) \leq 13$, by choosing birational morphisms like the one above, 
$\mathbb{P}^2$ with images of $|F|$'s become such models for all $f$'s, 
while $\Sigma_d$ with any images can not be models of some $f$'s for $d\not=1$ 
(cf.\ Proposition~\ref{Minimini} and Theorem~\ref{PlanePlan}). 
Furthermore, the main theorem describes the such models on $\mathbb{P}^2$ as follows:

%
%
\begin{theorem}[cf.\ {\upshape Theorem~\ref{MT}}]\label{IntMT}
Let $f:X\to\mathbb{P}^1$ be a fibred rational surface of genus two. 
Assume that $\rho(X)\not=14$. 
Then there exists a birational morphism $\upsilon_0:X\to\mathbb{P}^2$ 
such that $\upsilon_0(F)$ is one of the following$:$ 
\begin{itemize}
\item[$(\mathrm{A})$]
In the case where $\rho(X)=13$ and $f$ has a $(-1)$-section, 
$\deg\upsilon_0(F)=6$ and singularities of $\upsilon_0(F)$ are eight double points. 
\item[$(\mathrm{B}1)$]
In the case where $\rho(X)=12$ and $f$ has no $(-1)$-section, 
$\deg\upsilon_0(F)=7$ and singularities of $\upsilon_0(F)$ are 
one triple point and ten double points. 
\item[$(\mathrm{B}2)$]
In the case where $\rho(X)=12$ and $f$ has a $(-1)$-section, 
$\deg\upsilon_0(F)=9$ and singularities of $\upsilon_0(F)$ are 
eight triple points and two double points. 
\item[$(\mathrm{C})$]
In the case where $\rho(X)=11$ and $f$ has no $(-1)$-section,
$\deg\upsilon_0(F)=13$ and singularities of $\upsilon_0(F)$ are 
one quintuple point and nine quadruple points. 
\end{itemize}

Furthermore, $\deg\upsilon_0'(F)\geq\deg\upsilon_0(F)$, 
for any birational morphism $\upsilon_0':X\to\mathbb{P}^2$. 
If the equality sign holds, 
then the types of singularities of $\upsilon_0'(F)$ are the same as $\upsilon_0(F)$'s. 
\end{theorem}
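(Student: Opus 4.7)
The plan is to translate the theorem into a diophantine problem on $\mathrm{NS}(X)$ and solve it case by case. For any birational morphism $\upsilon_0 : X \to \mathbb{P}^2$, let $H = \upsilon_0^{*}\mathcal{O}_{\mathbb{P}^2}(1)$ and let $E_1,\dots,E_n$ ($n = \rho(X) - 1$) be the total transforms of the successive exceptional divisors, so that $(H,E_1,\dots,E_n)$ is the standard orthogonal basis of $\mathrm{NS}(X)$ with $H^2 = 1$, $E_i^2 = -1$, and $K_X = -3H + \sum E_i$. Writing $F = dH - \sum m_i E_i$, where $d = \deg \upsilon_0(F)$ and $m_i$ is the multiplicity of the intermediate strict transform of $F$ at the $i$-th centre, the conditions $F^2 = 0$ and the adjunction relation $K_X \cdot F = 2g - 2 = 2$ collapse to
\[
\sum_{i=1}^{n} m_i = 3d + 2, \qquad \sum_{i=1}^{n} m_i^2 = d^2.
\]

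\emph{Existence.} Proposition~\ref{Minimini} and Theorem~\ref{PlanePlan} guarantee that, when $\rho(X) \leq 13$, some birational morphism $X \to \mathbb{P}^2$ exists; take $\upsilon_0$ of minimal degree. Cauchy--Schwarz applied to the two displayed equations gives the first bound $(3d+2)^2 \leq n\, d^2$, yielding a lower estimate on $d$ purely in terms of $n$. When $f$ has no $(-1)$-section I would further argue that $m_i \neq 1$ for every $i$: if some $m_i$ equalled $1$, the exceptional divisor $E_i$ (or its strict transform under later blow-ups) would descend to an effective $(-1)$-curve meeting $F$ once, i.e.\ to a $(-1)$-section, contradicting the hypothesis. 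Combining these inequalities with integrality, a direct enumeration shows that in each of the four cases the diophantine system admits a unique (multiset-wise) minimising solution: $(d; m_i) = (6; 2^8, 1^4)$ in (A); $(7;3,2^{10})$ in (B1); $(9;3^8,2^2,1)$ in (B2); and $(13;5,4^9)$ in (C). Since each $m_i$ is by construction the multiplicity of the image of $F$ at the corresponding blown-up point, and since entries $m_i = 1$ correspond to smooth base points rather than singularities, the stated degrees and singularity types of $\upsilon_0(F)$ follow directly.

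\emph{Minimality.} Any other birational morphism $\upsilon_0' : X \to \mathbb{P}^2$ produces its own orthogonal basis $(H', E_1', \dots, E_n')$ of $\mathrm{NS}(X)$, and the associated $(d', m_i')$ satisfy the same two equations and the same hypothesis-driven exclusion $m_i' \neq 1$. The enumeration above applies verbatim, giving $d' \geq d$ with equality forcing the multisets $\{m_i'\}$ and $\{m_i\}$ to coincide, and hence the singularity types to match. I expect the principal obstacle to be the step ``no $(-1)$-section $\Rightarrow$ $m_i \neq 1$'': one must examine the effective exceptional classes on $X$ and rule out that a multiplicity-one centre could be absorbed into an infinitely near chain without producing any $(-1)$-curve of $F$-intersection $1$. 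The greedy contraction procedure from the introduction---which at every stage prioritises $(-1)$-curves of minimal $F$-intersection---is what makes this exclusion tractable: any hypothetical exceptional curve of $F$-intersection $1$ would be contracted first, so its absence in the reduction to $\mathbb{P}^2$ is equivalent to the hypothesis that $f$ has no $(-1)$-section.
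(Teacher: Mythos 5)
Your two equations $\sum m_i=3d+2$, $\sum m_i^2=d^2$ are correct, and the enumeration does work in cases $(\mathrm{A})$, $(\mathrm{B}1)$ and $(\mathrm{C})$ once one grants $m_i\geq 2$ in the sectionless cases. But the purely diophantine approach breaks down exactly where the geometry matters, most visibly in case $(\mathrm{B}2)$. There $n=\rho(X)-1=11$ and the hypothesis ``$f$ has a $(-1)$-section'' gives you no lower bound on the $m_i$ beyond $m_i\geq1$; the system then admits the solution $d=7$, $(m_i)=(3,2^{10})$ — numerically identical to case $(\mathrm{B}1)$ — so your enumeration yields only $\deg\upsilon_0'(F)\geq7$, not the required $\geq9$. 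Even at $d=9$ uniqueness fails numerically: $(4,3^5,2^5)$ also satisfies $\sum m_i=29$, $\sum m_i^2=81$ with eleven entries, so ``equality forces the multisets to coincide'' is false. The same problem appears in case $(\mathrm{A})$ versus a hypothetical ``$\rho=13$, no $(-1)$-section'' configuration (e.g.\ $d=9$, $(5,4,2^{10})$ passes both equations), so the exhaustiveness of the four cases is also not a consequence of the lattice computation. The paper resolves all of this through the \#-minimal model machinery: Lemma~\ref{NT} pins down the admissible models numerically, Lemma~\ref{Barabara} shows that for a \emph{given} $f$ all \#-minimal models have the same type (so a degree-$7$ plane model cannot coexist with a $(-1)$-section), and Lemma~\ref{MinOfAdjDeg} bounds the multiplicities of any other plane model by comparing the induced pencil of rational curves with the minimal one. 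None of these inputs is recoverable from $F^2=0$ and $K_X.F=2$ alone.

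There is a second, independent gap on the existence side. You take ``$\upsilon_0$ of minimal degree'' and then argue its invariants must be the minimal solution of the diophantine system, but nothing guarantees that the minimal degree actually attained by a birational morphism equals the minimal degree of a numerical solution: a class such as $13\ell-5e_1-4\sum e_i$ could a priori fail to be realized by any actual blow-down configuration, with every genuine morphism landing on a larger-degree model. Indeed the paper must work to exclude precisely such a competitor: the numerically consistent \#-minimal model $(\ref{a8})$, which corresponds to a degree-$15$ plane model with multiplicities $(5^8,4,3)$ for $\rho(X)=11$, is ruled out only by the analysis of the branch divisor of the relative canonical double cover (Lemma~\ref{Lem:a8}, resting on Horikawa's classification). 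Your proposal has no counterpart to this step, and without it the existence statement in case $(\mathrm{C})$ is unproved. Finally, your reduction ``no $(-1)$-section $\Rightarrow m_i\neq1$'' is fixable (via proximity inequalities and relative minimality one can chase an $m_i=1$ down to an irreducible $(-1)$-curve meeting $F$ once), but as written it is only a sketch; it is also the least serious of the three issues.
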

%
%

The last two statements in Theorem~\ref{IntMT} imply that four pencils in 
$(\mathrm{A})$, $(\mathrm{B}1)$, $(\mathrm{B}2)$ and $(\mathrm{C})$ are canonical. 
Furthermore, we expect the followings to be the frame lattices.

\begin{theorem}[cf.\ {\upshape \cite{Kitagawa6}}]
For fibred surfaces in cases $(\mathrm{A})$, $(\mathrm{B}1)$, $(\mathrm{B}2)$ and 
$(\mathrm{C})$ respectively, Mordell-Weil lattices of the maximal rank $2(\rho(X)-8)$ 
are isomorphic to a unimodular integral lattice whose extended Dynkin diagram is given by 
{\upshape Figures}~$\ref{fig:FL(A)}$, $\ref{fig:FL(B1)}$, $\ref{fig:FL(B2)}$ 
and $\ref{fig:FL(C)}$. 
\begin{figure}[hbtp]
\begin{center}
\begin{picture}(214.5,35)
\setlength{\unitlength}{1.08358pt}
%
%
\put(0,0){\migite}
\put(22.5,0){\hidariaOne}
\put(67.5,-15){\FLEight}
\end{picture}
\caption{}\label{fig:FL(A)}
\end{center}
\end{figure}
\begin{figure}[hbtp]
\begin{center}
\begin{picture}(192,18)
\setlength{\unitlength}{1.08358pt}
%
%
\put(0,0){\migite}
\put(22.5,0){\migite}
\put(45,0){\migite}
\put(67.5,0){\hidariaOne}
\put(112.5,0){\migite}
\put(135,0){\migite}
\put(157.5,0){\migite}
\put(180,0){\hidariaOne}
\end{picture}
\caption{}\label{fig:FL(B1)}
\end{center}
\end{figure}
\begin{figure}[hbtp]
\begin{center}
\begin{picture}(147,35)
\setlength{\unitlength}{1.08358pt}
%
%
\put(0,-15){\FLEight}
\end{picture}
\caption{}\label{fig:FL(B2)}
\end{center}
\end{figure}
\begin{figure}[hbtp]
\begin{center}
\begin{picture}(190,18)
\setlength{\unitlength}{1.08358pt}
%
%
\put(0,0){\migite}
\put(22.5,0){\hidariaOne}
\put(67.5,0){\migite}
\put(90,0){\hidariaOne}
\put(135,0){\migite}
\put(157.5,0){\hidariaOne}
\end{picture}
\caption{}\label{fig:FL(C)}
\end{center}
\end{figure}
Here the numbers in the circles denote the self-pairings of elements except for roots, 
and a line between two circles shows 
that the pairing of the corresponding two elements is equal to $(-1)$.

Furthermore, any other Mordell-Weil lattice of a fibred surface as above is isomorphic to 
the dual lattice of at most a sublattice of the maximal one. 
\end{theorem}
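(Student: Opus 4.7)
The plan is to use the canonical birational contraction $\upsilon_0 : X \to \mathbb{P}^2$ supplied by Theorem~\ref{IntMT} in each of the four cases, work with the corresponding explicit $\mathbb{Z}$-basis of $NS(X)$, and appeal to the general theory of Mordell-Weil lattices for higher-genus fibrations of Shioda~\cite{Shioda99}. Concretely, the basis consists of the pullback $L$ of a line in $\mathbb{P}^2$ together with the total transforms $E_1, \ldots, E_{\rho(X)-1}$ of the exceptional divisors in the order dictated by the multiplicity sequence of $\upsilon_0(F)$; in each case the class of the generic fibre admits a transparent expression, for instance $F \equiv 6L - 2\sum_{i=1}^{8} E_i - \sum_{j=9}^{12} E_j$ in case $(\mathrm{A})$, and analogous formulae in $(\mathrm{B}1)$, $(\mathrm{B}2)$ and $(\mathrm{C})$.

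Next, I would identify the trivial lattice $T \subset NS(X)$ spanned by $F$, a chosen section, and the components of reducible fibres. The maximal rank $2(\rho(X)-8)$ is attained precisely when $f$ admits no reducible fibre, in which case $T$ has the minimal possible rank $16 - \rho(X)$. The Mordell-Weil lattice is then identified, up to sign of pairing, with the orthogonal complement $T^{\perp}$ inside $NS(X)$. Explicit generators of $T^{\perp}$ are produced by tracking the exceptional divisors meeting $F$ with equal multiplicity, together with the $(-1)$-sections predicted by Theorem~\ref{IntMT}; their pairwise intersection numbers are collected into a Gram matrix which is then matched against Figures~\ref{fig:FL(A)}--\ref{fig:FL(C)}. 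Unimodularity follows from the identity $|\det T| \cdot |\det T^{\perp}| = |\det NS(X)| = 1$ combined with a direct verification that $|\det T| = 1$ in the maximal case.

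For the second assertion, any non-maximal Mordell-Weil lattice arises from a specialisation in which one or several fibres become reducible. Each such reducible fibre enlarges $T$ by adjoining the classes of its components, forcing $T^{\perp}$ to shrink to a finite-index sublattice of the maximal lattice; Shioda's formalism identifies the new Mordell-Weil lattice as the dual (with sign-reversed pairing) of this sublattice, which delivers the final statement.

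The main obstacle is the explicit identification of the root versus non-root elements inside $T^{\perp}$ needed to reproduce exactly the extended Dynkin diagrams of Figures~\ref{fig:FL(A)}--\ref{fig:FL(C)}. This is most delicate in case $(\mathrm{B}2)$, where the eight triple and two double base points generate a large and tightly interlocked configuration of exceptional curves, and in case $(\mathrm{C})$, where the unique quintuple point produces an infinitely-near cascade whose intersection pattern with $F$ must be disentangled. A secondary difficulty is case $(\mathrm{B}1)$, in which no $(-1)$-section exists, so that the ``zero section'' demanded by Shioda's formalism must be taken with higher self-intersection and the compatibility of the resulting height pairing with this choice must be separately verified.
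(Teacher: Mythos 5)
The paper itself contains no proof of this theorem: it is stated with the citation ``cf.~\cite{Kitagawa6}'', which is listed in the bibliography as \emph{in preparation}, so there is no argument in the present text to compare yours against. Judged on its own, your outline takes the natural route (Shioda's identification of the Mordell--Weil lattice with data inside $\mathrm{NS}(X)$, computed in the $\mathbb{Z}$-basis supplied by $\upsilon_0$), but it has a genuine gap at its centre.

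Your claim that the maximal rank $2(\rho(X)-8)$ is attained ``precisely when $f$ admits no reducible fibre, in which case $T$ has the minimal possible rank $16-\rho(X)$'' is internally inconsistent and, as stated, false. If $f$ had no reducible fibre, the trivial lattice would have rank $2$ (spanned by $F$ and the zero section), and Shioda's rank formula $\operatorname{rank}\mathrm{MW}=\rho(X)-2-\sum_v(m_v-1)$ would give rank $\rho(X)-2$, which strictly exceeds $2(\rho(X)-8)$ for $\rho(X)\leq 13$. The real content of the theorem is that this cannot happen: one must show that every fibration in cases $(\mathrm{A})$--$(\mathrm{C})$ satisfies $\sum_v(m_v-1)\geq 14-\rho(X)=(K_X+F)^2$, i.e.\ carries at least $(K_X+F)^2$ ``extra'' fibre components, with equality characterising the maximal case; only then does $T$ attain rank $16-\rho(X)$. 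This lower bound does not follow from the blow-up description of $\mathrm{NS}(X)$. It requires Horikawa's classification of the degenerate fibres of a genus-two fibration --- the singularity types $(0),(\mathrm{I}_k),\ldots,(\mathrm{V})$ of the branch divisor and the index formula $(\ref{THindex})$ --- together with a count showing that a fibre of positive Horikawa index $h$ has at least $h+1$ irreducible components. Your proposal has no mechanism producing the correction term $-(14-\rho(X))$, so it cannot establish even the upper bound on the rank, let alone pin down the Gram matrices of the figures. Two smaller slips: for unimodular $\mathrm{NS}(X)$ the correct identity is $|\det T|\cdot|\det T^{\perp}|=[\mathrm{NS}(X):T\oplus T^{\perp}]^{2}$, not $=1$ (your intended implication $\det T=\pm1\Rightarrow\det T^{\perp}=\pm1$ does survive); and in your final paragraph, degenerating further fibres enlarges $T$ and therefore \emph{lowers} the rank of $T^{\perp}$, so the non-maximal Mordell--Weil lattice is the dual of a sublattice of smaller rank of the maximal one, not of a finite-index sublattice.
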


In particular, the maximal Mordell-Weil lattice is $E_8$ in the case $(\mathrm{B}2)$. 
On the other hand, that in the case $(\mathrm{B}1)$, or as in Figure~$\ref{fig:FL(B1)}$ 
is an odd lattice. 
Therefore, 
the existence of a $(-1)$-section affects the structure of $f$ essentially. 
In the last of this paper, for each case in Theorem~\ref{IntMT}, 
we describe an example which is extremal in the sense that Mordell-Weil group of $f$ is trivial. 
The sum of all dual graphs of reducible fibres of $f$ in the example, 
or 
as in Figures~\ref{fig:H1V3}, \ref{fig:H2VV}, \ref{fig:H2II1} and \ref{fig:H3VVV} respectively 
contains the extended Dynkin diagram 
as in Figures~$\ref{fig:FL(A)}$, $\ref{fig:FL(B1)}$, $\ref{fig:FL(B2)}$ and $\ref{fig:FL(C)}$.
These are the same as in the elliptic case, and we are interested in 
singularities obtained from the reducible fibres by contracting irreducible components.

A sketch of the proof of Theorem~\ref{IntMT} is as follows: 
For a fibred rational surface $f:X\to\mathbb{P}^1$ of genus two with $\rho(X)\leq13$, 
we consider a \#-minimal model of the reduction of $(X,F)$. 
The method as in \cite[\S2]{Kitagawa3} leaves 
five numerical possibilities for the \#-minimal models. 
However, we can exclude one of them in the course of the study of the branch divisor, 
associated with the relative canonical map classified by Horikawa \cite{Hori77}. 
Now, a procedure taking the \#-minimal model 
gives a birational morphism $X\to\mathbb{P}^2$ naturally. 
We pay a special attention to $\rho(X)$ and the existence of a $(-1)$-section of $f$. 
Although a \#-minimal model is not unique in general, 
the four cases in Theorem~\ref{IntMT} correspond in a one-to-one manner to the four types of \#-minimal models. 
By comparing base-point-free pencils of rational curves on $X$ with a minimal one, 
we have the last two statements. 
%
%

\medskip

A{\sc cknowledgment}. 
The author would like to express his heartfelt gratitude to 
Professor Kazuhiro Konno for his valuable advice, guidance and encouragement. 
Thanks are also due to Professors Tetsuji Shioda, Tadashi Ashikaga 
and Doctors Atsushi Ikeda, Mizuho Ishizaka, Takeshi Harui for interesting discussions. 

%

%
%
%
%
%
%
%
%
%
%
\section{Preliminaries}\label{Sec:RA}
Let $X$ be a smooth projective rational surface defined over $\mathbb{C}$ and 
$f:X\to \mathbb{P}^1$ a relatively minimal fibration 
whose general fibre $F$ is a smooth projective curve of genus $g\geq2$. 
Then $K_X+F$ is nef and it follows that 
the self-intersection number of a section of $f$ is negative. 
Furthermore, from Noether's formula and the genus formula, the Picard number $\rho(X)$ of $X$ is as follows: 
\begin{align}
\rho(X) =4g+6 - (K_X+F)^2\leq 4g+6. \label{Eq:PicNum}
\end{align}
Put $\Sigma_d=\mathbb{P}(\mathcal{O}_{\mathbb{P}^1}\oplus\mathcal{O}_{\mathbb{P}^1}(d))\to\mathbb{P}^1$. 
Let $\Delta_0$ be a section of $\Sigma_d$ with $\Delta_0^2=-d$ and $\Gamma$ a fibre. 
When $\rho(X)=4g+6$, that is $(K_X+F)^2=0$, we obtain $f:X\to\mathbb{P}^1$ 
from a suitable subpencil $\Lambda_f$ of $|2\Delta_0+(d+g+1)\Gamma|$ with $0\leq d \leq g+1$ 
by blowing $\Sigma_d$ up at the $(4g+4)$ base points 
(cf.\ \cite[Theorem~4.1]{SS} and \cite[Theorem~2.4]{Viet00}, see also \cite{Kitagawa5}). 
In particular, $f$ always has a $(-1)$-{\it section} $E$, i.e., a $(-1)$-curve with $F.E=1$.

Assume that $(K_X+F)^2>0$. 
We briefly review basic notation and results of such fibred surfaces $f:X\to \mathbb{P}^1$ 
according to \cite{KK} and \cite{Kitagawa3}.

Suppose that there exists a $(-1)$-curve $E$ with $(K_X+F).E=0$ and 
let $\mu_1:X\to X_1$ be its contraction. 
Since $F.E=1$, $F_1:=(\mu_1)_*F$ is smooth on $X_1$. 
Furthermore, we have $\mu_1^*(K_{X_1}+F_1)=K_X+F$. 
If there exists a $(-1)$-curve $E_1$ with $(K_{X_1}+F_1).E_1=0$, then, 
by contracting it, we get the pair $(X_2,F_2)$ with $F_2$ smooth 
and $K_{X_2}+F_2$ pulls back to $K_X+F$. 
We can continue the procedure until we arrive at a pair $(X_n,F_n)$ 
such that we cannot find a $(-1)$-curve $E_n$ with $(K_{X_n}+F_n).E_n=0$. 
We put $Y:=X_n$ and $G:=F_n$. 
If $\mu:X\to Y$ denotes the natural map, then $\mu^*(K_Y+G)=K_X+F$ and 
$G=\mu_*F$ is a smooth curve isomorphic to $F$. 
The original fibration $f:X\to \mathbb{P}^1$ corresponds to a pencil 
$\Lambda_f\subset |G|$ with at most simple 
(but not necessarily transversal) base points. 
Remark that 
\begin{align}
G^2=\#\mathrm{Bs}\Lambda_f\geq0, \ \ \ {K_X}^2={K_Y}^2-G^2.\label{poG}
\end{align}
Since $K_X+F$ is nef and big, 
$Y$ is the minimal resolution of singularities of the surface ${\rm Proj}(R(X,K_X+F))$, 
which has at most rational double points from \cite[Lemma~1.2]{KK}, 
where $R(X,K_X+F)=\bigoplus_{n\geq 0} H^0(X,n(K_X+F))$. 
Therefore, such a model is uniquely determined. 
We call the pair $(Y,G)$ the {\it reduction} of $(X,F)$.

If $Y=\mathbb{P}^2$, then $G$ is a smooth plane curve of degree $b$ with $b\geq4$. 
In this case, we have $g=(b-1)(b-2)/2$ and 
\begin{align}
\label{eq:Plane}
(K_X+F)^2=(K_Y+G)^2=(b-3)^2=4g+5-b^2\leq 4g-11. 
\end{align}
In the case of $Y=\Sigma_d$, from \cite[Lemma~2.5]{Kitagawa3}, we have 
\begin{align}
\label{eq:Hd}
(K_X+F)^2=2c(g-c-1)/(c+1)<2g-2 
\end{align}
with the Clifford index $c$ of $F$, which is a non-negative integer.

Assume that $Y$ is neither $\mathbb{P}^2$ nor $\Sigma_d$. 
Then we can find at least one base-point-free pencil of rational curves on $Y$. 
We choose among them a pencil $|\Gamma_Y|$ of rational curves with ${\Gamma_Y}^2=0$ 
in such a way that $a:=(K_Y+G).\Gamma_Y$ is minimal. 
%
%
Note that $a>0$ from \cite[Lemma~1.2]{KK}. 
We have $G.\Gamma_Y=a+2$ since $K_Y.\Gamma_Y=-2$. 
Let $\psi:Y\to \mathbb{P}^1$ be the morphism defined by $|\Gamma_Y|$. 
We take a relatively minimal model of $Y$ with respect to $\psi$ 
and consider the image of $G$. 
Then we perform a succession of elementary transformations 
(\cite{Hartshorne}) at singular points of the image curve 
to arrive at a particular relatively minimal model $(Y^\#,G^\#)$, 
called a \#-minimal model in \cite{Iitaka}, enjoying several nice properties which we collect below. 
The natural map $\nu:(Y,G)\to (Y^\#,G^\#)$ is a minimal succession of 
blowing-ups which resolves the singular points of $G^\#$. 
We assume that $Y^\#\simeq \Sigma_d$ and $G^\#\sim(a+2)\Delta_0+b\Gamma$, 
where the symbol $\sim$ means the linear equivalence of divisors. 
Let $p_i$, $1\leq i\leq N$, be the singular points of $G^\#$ including infinitely near ones, 
and let $m_i$ be the multiplicity of $G^\#$ at $p_i$. 
Assume for simplicity that $m_1\geq m_2\geq \cdots \geq m_N\geq 2$. 
Since $|\Gamma_Y|$ is chosen so that $(K_Y+G).\Gamma_Y$ is minimal, we can assume that 
the following are satisfied (see \cite{Hartshorne} and \cite{Iitaka}): 

\medskip

(\#1) $b\geq (a+2)d$ when $d>0$, and $b\geq a+2$ when $d=0$, 

\medskip

%

(\#2) $m_1\leq (a+2)/2$ and $m_1\leq \min\{(a+2)/2,\ b-(a+2)\}$ when $d=1$.

\medskip

\noindent
For another birational morphism $\nu':(Y, G)\to(\Sigma_{d'}, G')$ such that 
$a':=(K_Y+G).(\nu')^*\Gamma'$ is not minimal, 
we also call $(\Sigma_{d'}, G')$ a \#-minimal model of $(Y, G)$ if $(\Sigma_{d'}, G')$ satisfies 
(\#1) and (\#2) with $m'_1\geq m'_2\geq\cdots\geq m'_N\geq2$. 
%
%
%
%
%
%
In fact, $a\geq 2$ holds from $(\#2)$. 
We say that $(Y^\#,G^\#)$ is of {\it general type} if $2b-(a+2)d\geq 2(a+2)$.
Otherwise, i.e., when $d=1$ and $2b< 3(a+2)$, 
the pair is called of {\it special type}.
If this is the case, by contracting the minimal section, we get 
a model of $G$ which is a plane curve of degree $b$ with a $(b-a-2)$-ple 
point and $N$ other singular points of respective multiplicities $m_i$ $(\leq b-a-2)$. 
We note that a \#-minimal model of special type does not appear when $a=2$.

At first, consider the case where $(Y^\#,G^\#)$ is of general type. 
Set $\breve{b}=b-(d+2)(a+2)/2$, 
where $\breve{b}\geq\max\{0, (d-2)(a+2)/2\}$, 
and where $\breve{b}\in\mathbb{Z}$ or $\breve{b}\in\mathbb{Z}[1/2]$ according as $a$ is even or odd. 
Then 
\begin{align*}
G^\#\sim(a+2)\Delta_0+(\breve{b}+(d+2)(a+2)/2)\Gamma.
\end{align*}
From a standard calculation, we have 
\begin{align}
(K_Y+G)^2&=2a(a+\breve{b})-\sum_{i=1}^{N}(m_i-1)^2,		\label{Eq:AdjDegGenS}	\\
g&=(a+1)(a+1+\breve{b})-\frac{1}{2}\sum_{i=1}^{N}m_i(m_i-1),	\label{Eq:gGenS}	\\
G^2&=2(a+2)(a+2+\breve{b})-\sum_{i=1}^{N}m_i^2.		\label{Eq:GSqGenS}
\end{align}
%
%
%
Compare $(\ref{Eq:AdjDegGenS})$ with $(\ref{Eq:gGenS})$ and 
consider $2(m_i-1)/m_i=(m_i-1)^2/((1/2)m_i(m_i-1))$, 
which is a monotonically increasing function for $m_i$. 
If we know only $\mathfrak{n}$ values of $m_1,\ldots,m_\mathfrak{n}$ among the multiplicities, 
then, by mimicking \cite[Lemmas~2.6 and 2.8]{Kitagawa3}, 
we have a lower bound of $(K_Y+G)^2$ as follows: 
If $m_{\mathfrak{n}+1}\leq\mathfrak{m}\leq m_\mathfrak{n}$ for an integer $\mathfrak{m}$, then 
\begin{align*}
(K_Y+G)^2\geq &\frac{2(\mathfrak{m}-1)}{\mathfrak{m}}
\left( g-(a+1)(a+1+\breve{b})+\frac{1}{2}\sum_{i=1}^\mathfrak{n}m_i(m_i-1)\right)
\\
&+2a(a+\breve{b})-\sum_{i=1}^\mathfrak{n}(m_i-1)^2.
\end{align*}
When the equality sign holds, 
$m_{\mathfrak{n}+1}=\cdots=m_N=\mathfrak{m}$.
%
Furthermore, we have the following: 
\begin{lemma}\label{frak}
Keep the notation and assumptions as above. 
Let $\mathfrak{n}$ be a non-negative integer and $\mathfrak{m}$ an integer. 
If $a$ is even and $m_{\mathfrak{n}+1}\leq\mathfrak{m}$, then
\begin{align*}
(K_Y+G)^2\geq\frac{2(\mathfrak{m}-1)}{\mathfrak{m}}
\left(g-(a+1)(a+1+\breve{b})+\frac{\mathfrak{n}a(a+2)}{8}\right)
+2a(a+\breve{b})-\frac{\mathfrak{n}a^2}{4}.
\end{align*}
When the equality sign holds, 
$m_\mathfrak{n}=(a+2)/2$ and $m_N=\mathfrak{m}$.
\end{lemma}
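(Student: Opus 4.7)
The plan is to mimic the proof of the preceding (unnumbered) inequality, using the universal bound $m_i \le (a+2)/2 =: M$ (valid for all $i$ by condition $(\#2)$, and $M$ is an integer because $a$ is even) as a surrogate for exact knowledge of $m_1, \ldots, m_\mathfrak{n}$. Observe that $\mathfrak{n}a(a+2)/8 = \mathfrak{n}M(M-1)/2$ and $\mathfrak{n}a^2/4 = \mathfrak{n}(M-1)^2$ are exactly the values the sums $\sum_{i=1}^{\mathfrak{n}}\tfrac{1}{2}m_i(m_i-1)$ and $\sum_{i=1}^{\mathfrak{n}}(m_i-1)^2$ would take if every one of $m_1,\ldots,m_\mathfrak{n}$ equalled $M$; so the inequality amounts to replacing the ``known'' contributions in the preceding inequality by their values at $m_i = M$.

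First I would use the monotonicity of $m \mapsto 2(m-1)/m$ and the hypothesis $m_i \le m_{\mathfrak{n}+1} \le \mathfrak{m}$ for $i > \mathfrak{n}$ to obtain $(m_i-1)^2 \le \tfrac{\mathfrak{m}-1}{\mathfrak{m}}\,m_i(m_i-1)$, exactly as in the preceding inequality. Summing over $i > \mathfrak{n}$, using \eqref{Eq:gGenS} to substitute $\sum_{i>\mathfrak{n}}m_i(m_i-1) = 2(a+1)(a+1+\breve{b}) - 2g - \sum_{i=1}^{\mathfrak{n}}m_i(m_i-1)$, and inserting the result into \eqref{Eq:AdjDegGenS} gives
\[
(K_Y+G)^2 \;\geq\; \frac{2(\mathfrak{m}-1)}{\mathfrak{m}}\bigl(g - (a+1)(a+1+\breve{b})\bigr) + 2a(a+\breve{b}) + \sum_{i=1}^{\mathfrak{n}}\frac{(m_i-1)(\mathfrak{m}-m_i)}{\mathfrak{m}}.
\]

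The remaining task is to bound each summand from below by $(M-1)(\mathfrak{m}-M)/\mathfrak{m}$; summing will then produce exactly the stated bound after substituting $M = (a+2)/2$ to recover the coefficients $\mathfrak{n}a(a+2)/8$ and $\mathfrak{n}a^2/4$. A direct expansion yields the identity
\[
(m_i-1)(\mathfrak{m}-m_i) - (M-1)(\mathfrak{m}-M) = (M - m_i)\,(m_i + M - \mathfrak{m} - 1),
\]
and since $m_i \le M$ the first factor is non-negative. I would then verify, by a short case analysis on the size of $\mathfrak{m}$ relative to $M$ together with the bound $m_i \ge 2$, that the second factor is also non-negative in the relevant regime, yielding the desired term-wise estimate.

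For the equality clause, tracing back: equality in the step-wise bound for $i > \mathfrak{n}$ forces $m_i = \mathfrak{m}$ (as $m_i \ge 2$), hence in particular $m_N = \mathfrak{m}$; equality in each term-wise inequality forces $(M - m_i)(m_i + M - \mathfrak{m} - 1) = 0$, which (combined with $m_i \le M$ and the generic non-vanishing of the second factor) forces $m_i = M$ for $i \le \mathfrak{n}$, so that $m_\mathfrak{n} = (a+2)/2$. The main obstacle is precisely the sign-check on $m_i + M - \mathfrak{m} - 1$ in the term-wise inequality: it is immediate when $\mathfrak{m} \le M+1$ because $m_i \ge 2$, and also when $\mathfrak{m} \le m_\mathfrak{n}$ because then $m_i \ge \mathfrak{m}$ gives $m_i + M - \mathfrak{m} - 1 \ge M - 1 > 0$; pinning down the remaining cases is the delicate step and likely uses the monotonicity $m_1 \ge \cdots \ge m_N$ together with the aggregate constraint \eqref{Eq:gGenS}.
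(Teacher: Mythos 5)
The paper states Lemma~\ref{frak} without proof, and the intended argument is exactly the one you give: substitute the universal bound $m_i\le M:=(a+2)/2$ from $(\#2)$ into the displayed inequality preceding the lemma, after noting that $\mathfrak{n}a(a+2)/8$ and $\mathfrak{n}a^2/4$ are precisely the values of $\sum_{i\le\mathfrak{n}}\tfrac12 m_i(m_i-1)$ and $\sum_{i\le\mathfrak{n}}(m_i-1)^2$ when every $m_i=M$. Your re-derivation of that inequality (which indeed only needs $m_i\le\mathfrak{m}$ for $i>\mathfrak{n}$, not the upper constraint $\mathfrak{m}\le m_{\mathfrak{n}}$), the reduction to the term-wise estimate, the factorisation $(m_i-1)(\mathfrak{m}-m_i)-(M-1)(\mathfrak{m}-M)=(M-m_i)(m_i+M-\mathfrak{m}-1)$, and the equality analysis are all correct.

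The one step you leave open --- the sign of $m_i+M-\mathfrak{m}-1$ when $\mathfrak{m}$ exceeds both $M+1$ and $m_{\mathfrak{n}}$ --- cannot be filled, because in that regime the asserted inequality is not a consequence of $(\ref{Eq:AdjDegGenS})$, $(\ref{Eq:gGenS})$ and $(\#2)$ at all: for instance $a=4$, $\breve{b}=0$, $N=3$, $m_1=m_2=m_3=2$, $\mathfrak{n}=2$, $\mathfrak{m}=100$ gives $g=22$ and $(K_Y+G)^2=29$, while the right-hand side equals $(198/100)\cdot 3+24=29.94$. The lemma therefore carries a tacit upper bound on $\mathfrak{m}$ of exactly the kind you isolated. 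Every invocation in the paper takes $\mathfrak{m}\in\{(a+2)/2,\ a/2,\ (a-2)/2\}$, i.e.\ $\mathfrak{m}\le M$, where your observation $m_i+M-\mathfrak{m}-1\ge m_i-1>0$ settles both the inequality and the equality clause (the strict positivity forces $m_i=M$ for $i\le\mathfrak{n}$ at equality). So your proof is complete on the entire range where the lemma is true and where it is used; no further work is needed.
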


By putting $\mathfrak{n}=0$ and $\mathfrak{m}=(a+2)/2$ in Lemma~\ref{frak}, 
we have $(K_Y+G)^2\geq2a(g-1+\breve{b})/(a+2)$ if $a$ is even. 
In \cite[Lemma~2.6]{Kitagawa3}, for another case, we had the following: 
If $(Y^\#,G^\#)$ is of general type and $a$ is odd, then 
\begin{align}
(K_Y+G)^2\geq2g(a-1)/(a+1)+2. \label{eq:AdjDegGen}
\end{align}
When the equality sign holds, $\breve{b}=0$, $N\geq1$ and $m_N=(a+1)/2$.

Next, we consider the case where $(Y^\#,G^\#)$ is of special type. 
Let $m_0=b-(a+2)$. 
Then 
\begin{align*}
G^\#\sim(a+2)\Delta_0+(a+2+m_0)\Gamma,\ \ \ 2\leq m_0<(a+2)/2
\end{align*}
and
\begin{align}
G^2=(a+2)(a+2+2m_0)-\sum_{i=0}^Nm_i^2\geq(a+2)^2+2m_0(a+2)-Nm_0^2.\label{Eq:GSqSpS}
\end{align}
Furthermore, \cite[Lemma~2.7]{Kitagawa3} showed the following:
If $g<a(a+3)/2$ and $a$ is even, then 
\begin{align}
(K_Y+G)^2\geq2g(a-2)/a+4. \label{eq:AdjDegSpEven}
\end{align}
When the equality sign holds, 
$N>4+4/(a-2)$ and $m_N=a/2$. 
If $g<(a+1)(a+2)/2$ and $a$ is odd, then 
\begin{align}
(K_Y+G)^2\geq2g(a-1)/(a+1)+1. \label{eq:AdjDegSpOdd}
\end{align}
When the equality sign holds, 
$N\geq5$ and $m_N=(a+1)/2$.

In the last of this section, we give an upper bound of $(K_X+F)^2$ as follows:

\begin{proposition}\label{Prop:UBAdjDeg}
Let $X$ be a smooth rational surface and 
$f:X\to\mathbb{P}^1$ a relatively minimal fibration of genus $g\geq2$. 
Then $(K_X+F)^2\leq4g-5$ holds. 
\end{proposition}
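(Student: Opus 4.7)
The plan is to translate the inequality into the cleaner form $K_X^2 \leq -1$ and then rule out $K_X^2 \geq 0$ by a short Riemann--Roch argument. To effect the translation, I would compute $(K_X+F)^2 = K_X^2 + 2K_X.F + F^2 = K_X^2 + 4(g-1)$, using $F^2 = 0$ and the adjunction formula $K_X.F = 2g-2$; thus $(K_X+F)^2 \leq 4g-5$ is equivalent to $K_X^2 \leq -1$, which by Noether's formula amounts to $\rho(X) \geq 11$ (consistent with the Picard number discussion in the introduction).

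Under the contrary hypothesis $K_X^2 \geq 0$, the plan is to show that $|{-}K_X|$ is non-empty. Riemann--Roch gives $\chi(-K_X) = \chi(\mathcal{O}_X) + K_X^2 = 1 + K_X^2 \geq 1$. Because $X$ is rational, its plurigenera all vanish, so $h^0(2K_X) = 0$; Serre duality then yields $h^2(-K_X) = h^0(2K_X) = 0$. Hence $h^0(-K_X) \geq \chi(-K_X) \geq 1$, so $-K_X$ is represented by an effective divisor $D$.

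To close the argument, I would invoke the nefness of the fibre class: for any irreducible curve $C$ on $X$, either $C$ is contained in a fibre of $f$, in which case $F.C = 0$, or $f|_C$ is surjective, in which case $F.C = \deg(f|_C) > 0$. Intersecting $D$ with $F$ then gives $-K_X.F = D.F \geq 0$, so $K_X.F \leq 0$; this contradicts $K_X.F = 2g-2 > 0$ for $g \geq 2$. The argument requires no case analysis of the \#-minimal model, and in particular does not rely on the bounds (\ref{eq:AdjDegGen})--(\ref{eq:AdjDegSpOdd}); the only non-routine ingredient is the vanishing of the plurigenera of $X$, which is what delivers the effectiveness of $-K_X$ and hence the contradiction.
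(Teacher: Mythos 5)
Your proof is correct, and it takes a genuinely different and substantially more elementary route than the paper. You reduce the claim, via adjunction and $F^2=0$, to $K_X^2\leq-1$, and then rule out $K_X^2\geq0$ by Riemann--Roch plus the vanishing of the plurigenera: an effective anticanonical divisor would force $K_X.F\leq0$ against $K_X.F=2g-2>0$. Every step checks out ($\chi(-K_X)=1+K_X^2$, $h^2(-K_X)=h^0(2K_X)=0$, nefness of the fibre class), and your translation $K_X^2\leq-1\Leftrightarrow\rho(X)\geq11$ is consistent with $(\ref{Eq:PicNum})$. The paper instead argues through its reduction and \#-minimal model machinery: after disposing of the cases $Y=\mathbb{P}^2$ and $Y=\Sigma_d$ via $(\ref{eq:Plane})$ and $(\ref{eq:Hd})$, it uses the identity $(K_X+F)^2+G^2+N=4g+4$ from $(\ref{Eq:RedForm})$ and shows $G^2+N\geq9$ by a case analysis on general versus special type, exploiting the multiplicity constraints (\#1) and (\#2). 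Your argument is shorter, avoids all of that apparatus, and does not even use relative minimality of $f$; what the paper's route buys is that it is phrased in exactly the framework (reductions, \#-minimal models, the quantities $G^2$ and $N$) that the subsequent classification in Lemma~\ref{NT} depends on, so the same bookkeeping is reused rather than set up twice. As a self-contained proof of the proposition, yours is preferable.
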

%
%
\begin{proof}
We shall assume that $(K_X+F)^2>0$. 
Let $(Y,G)$ be the reduction of $(X,F)$. 
From $(\ref{eq:Plane})$ and $(\ref{eq:Hd})$, 
we have that $Y$ is neither $\mathbb{P}^2$ nor $\Sigma_d$. 
Furthermore, we take a \#-minimal model $(Y^\#, G^\#)$ is of $(Y, G)$. 
Remark that ${K_Y}^2={K_{Y^\#}}^2-N=8-N$. 
From $(\ref{poG})$ and the genus formula, we have 
\begin{align}
(K_X+F)^2+G^2+N=4g+4.\label{Eq:RedForm}
\end{align}
Hence, we only have to show that $G^2+N\geq9$. 
Assume that $N\leq8$. 
We suppose that $(Y^\#,G^\#)$ is of general type. 
Then we have $G^2\geq(8-N)(a+2)^2/4\geq8-N$ from $(\#2)$ and $(\ref{Eq:GSqGenS})$. 
If $G^2=8-N$ holds, then $a$ is even and $(\breve{b},m_N,N)=(0,(a+2)/2,8)$. 
However, it implies $g=1$, which is a contradiction. 
If $(Y^\#,G^\#)$ is of special type, then we have 
$G^2\geq3a+4\geq13$ 
from the assumption $N\leq8$, $(\#2)$ and $(\ref{Eq:GSqSpS})$. 
\end{proof}
%
%
%
%
%
%
%
%
%
%
\section{Canonical pencils of genus two curves}
From now on, we concentrate on relatively minimal fibrations $f:X\to\mathbb{P}^1$ of 
curves of genus two on smooth projective rational surfaces with $(K_X+F)^2>0$. 
We recall $(\ref{Eq:PicNum})$. 
We restate Theorem~\ref{IntMT} as follows:

%
%
\begin{theorem}\label{MT}
Let $f:X\to\mathbb{P}^1$ be a fibred rational surface of genus two. 
Assume that $(K_X+F)^2>0$. 
Then there exists a birational morphism $\upsilon_0:X\to\mathbb{P}^2$ 
such that the linear equivalence class of $F$ is one of the following$:$ 
\begin{align*}
(\mathrm{A})\ \ \ &6\ell-2\sum_{i=1}^8e_i-\sum_{i=9}^{12}e_i	&&\mbox{with }(K_X+F)^2=1,\\
(\mathrm{B}1)\ \ \ &7\ell-3e_1-2\sum_{i=2}^{11}e_i			&&\mbox{with }(K_X+F)^2=2,\\
(\mathrm{B}2)\ \ \ &9\ell-3\sum_{i=1}^8e_i-2e_9-2e_{10}-e_{11}	&&\mbox{with }(K_X+F)^2=2,\\
(\mathrm{C})\ \ \ &13\ell-5e_1-4\sum_{i=2}^{10}e_i			&&\mbox{with }(K_X+F)^2=3.
\end{align*}
Here $\ell$ is the pull-back to $X$ of a line on $\mathbb{P}^2$ 
and $e_i$ is that of a $(-1)$-curve contracted by $\upsilon_0$.

In particular, $f$ has a $(-1)$-section if and only if $\upsilon_0$ gives 
$(\mathrm{A})$ or $(\mathrm{B}2)$. 
Furthermore, $\deg\upsilon_0'(F)\geq\deg\upsilon_0(F)$, 
for any birational morphism $\upsilon_0':X\to\mathbb{P}^2$. 
If the equality sign holds, then the types of 
singularities of $\upsilon_0'(F)$ are the same as $\upsilon_0(F)$'s. 
\end{theorem}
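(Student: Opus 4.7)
My plan is to analyze any birational morphism $\upsilon_0' : X \to \mathbb{P}^2$ through its induced data in $\mathrm{NS}(X)$, and to compare with the canonical $\upsilon_0$ of Theorem~\ref{MT}. Factor $\upsilon_0'$ as $n = \rho(X) - 1$ successive blow-downs and let $\{L', E_1', \ldots, E_n'\}$ be the resulting orthogonal $\mathbb{Z}$-basis of $\mathrm{NS}(X)$, where $L' = (\upsilon_0')^* \mathcal{O}_{\mathbb{P}^2}(1)$, $(L')^2 = 1$, and $(E_i')^2 = -1$ for the total transforms $E_i'$ of the exceptional curves. Writing $F \equiv d' L' - \sum_i \nu_i' E_i'$ with $d' = \deg \upsilon_0'(F)$ and $\nu_i' = F \cdot E_i' \geq 0$, the identities $F^2 = 0$ and $K_X \cdot F = 2g - 2 = 2$ give
\begin{equation*}
\sum_i \nu_i' = 3d' + 2, \qquad \sum_i (\nu_i')^2 = (d')^2,
\end{equation*}
and the multiset $\{\nu_i' \mid \nu_i' \geq 2\}$ records the multiplicities of the singularities of $\upsilon_0'(F)$. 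After deleting indices with $\nu_i' = 0$, which only strengthens the bounds below, I may assume all $\nu_i' \geq 1$.

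Cauchy--Schwarz then yields $(3d'+2)^2 \leq n(d')^2$, i.e., $(n-9)(d')^2 \geq 12 d' + 4$. For $\rho(X) = 11$ ($n = 10$) this forces $d' \geq 13$, and a short enumeration of positive integer partitions satisfying both identities gives $(\nu_i') = (5, 4^9)$ uniquely, matching case (C). For $\rho(X) = 13$ ($n = 12$) the bound is $d' \geq 5$, with no integer solution at $d' = 5$ and the unique solution $(2^8, 1^4)$ at $d' = 6$, matching case (A). For $\rho(X) = 12$ ($n = 11$) the bound is $d' \geq 7$, with unique solution $(3, 2^{10})$ at $d' = 7$ (all entries $\geq 2$), and solutions containing some $\nu_i' = 1$ appearing first at $d' = 9$ where $(3^8, 2^2, 1)$ arises, matching (B1) and (B2) respectively. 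To discriminate (B1) from (B2), I would show that $f$ admits a $(-1)$-section if and only if the partition associated with some minimum-degree $\upsilon_0'$ contains at least one $\nu_i' = 1$: one direction is elementary, since a terminal $E_i'$ with $\nu_i' = 1$ is itself a $(-1)$-curve meeting $F$ once; the converse exploits the freedom to reorder the blow-downs so that a given $(-1)$-section of $f$ appears as a terminal $E_i'$.

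Combining these numerical minima with the canonical $\upsilon_0$ constructed in the main part of Theorem~\ref{MT}, which realizes each pattern, we conclude $\deg \upsilon_0'(F) \geq \deg \upsilon_0(F)$, with equality forcing $(\nu_i')$ to coincide with the canonical multiset and hence identical singularity types for $\upsilon_0'(F)$. The principal obstacle will be excluding the alternative numerical partition $(4, 3^5, 2^5)$ at $d' = 9$, $n = 11$---which satisfies both identities but has no entry equal to $1$---in the presence of a $(-1)$-section: one must verify, via the Diophantine equations $a^2 - \sum b_i^2 = -1$, $-3a + \sum b_i = -1$, $d' a - \sum \nu_i' b_i = 1$ defining any $(-1)$-section class $a L' - \sum b_i E_i'$, that no such class exists against this partition, thereby pinning down the singularity type of any minimum-degree $\upsilon_0'(F)$.
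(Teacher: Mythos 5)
Your numerical framework is sound as far as it goes: the identities $\sum_i\nu_i'=3d'+2$ and $\sum_i(\nu_i')^2=(d')^2$, the Cauchy--Schwarz bound $(n-9)(d')^2\geq 12d'+4$, and the partition enumerations at the minimal degrees are all correct. But the proposal has a circularity at its core: you invoke ``the canonical $\upsilon_0$ constructed in the main part of Theorem~\ref{MT}'' to realize each pattern, yet the existence of a birational morphism $X\to\mathbb{P}^2$ achieving the numerical minimum is precisely the hard content of the theorem. Your argument shows only that \emph{if} the minimal degree is attained then the multiplicities are forced; it does not show the minimum is attained, nor even that $X$ admits any birational morphism to $\mathbb{P}^2$ at all (a priori its minimal models could all be $\Sigma_d$ with $d\geq 2$). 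The paper spends most of its effort exactly here: Lemma~\ref{NT} pins down the \#-minimal models to five numerical types and shows a $\Sigma_1$-model always exists via elementary transformations, and Lemma~\ref{Lem:a8} uses Horikawa's analysis of the branch divisor of the relative canonical double cover to kill the fifth type $(\ref{a8})$ --- which would otherwise produce a surface with $\rho(X)=11$ whose only plane models have degree $15$ with singularities $5^8,4,3$ (also satisfying your two identities), so that your minimum $d'=13$ would never be achieved. Nothing in your lattice computation can detect or exclude this.

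Two further gaps. First, your enumeration at $n=11$ skips $d'=8$, where $(3^4,2^7)$ is the unique solution of both identities; to get from (B1) at $d'=7$ to (B2) at $d'=9$ you must exclude $d'=7$ \emph{and} $d'=8$ for surfaces with a $(-1)$-section, and this rests entirely on the forward direction of your claimed equivalence (``$(-1)$-section $\Rightarrow$ some minimum-degree model has a part $\nu_i'=1$''), for which ``reorder the blow-downs so the section appears as a terminal $E_i'$'' is not a proof: a given $(-1)$-curve on $X$ need not be contractible within \emph{any} minimum-degree morphism to $\mathbb{P}^2$, and contracting it first may only extend to morphisms of larger degree. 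The paper instead derives the dichotomy in Lemma~\ref{Barabara} from base-point-free pencils: in case (C), $F.C=((\ell-e_1)-4K_X).C\geq 4$ for every $(-1)$-curve $C$, and similarly $F.C\geq 2$ in case (B1), while irreducibility of $e_{11}$ (resp.\ $e_{12}$) gives the section in (B2) (resp.\ (A)). Second, you leave the exclusion of $(4,3^5,2^5)$ at $d'=9$ as an open obstacle; the paper disposes of it (and of all analogous spurious partitions) by Lemma~\ref{MinOfAdjDeg}: projection from a singular point of $\upsilon_0'(F)$ induces a base-point-free pencil $R$ of rational curves with $F.R\geq F.\Gamma_X$, whence every multiplicity is at most $\deg\upsilon_0'(F)-F.\Gamma_X$ ($=3$ in case (B2)), which rules out the quadruple point at once. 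You would need to supply genuine substitutes for all three of these ingredients before the numerical skeleton becomes a proof.
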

%
%

\subsection{Proof of Theorem~\ref{MT}}
Let $X$ be a smooth rational surface and 
$f:X\to\mathbb{P}^1$ a relatively minimal fibration of genus two. 
Assume that $(K_X+F)^2>0$. 
Let $(Y, G)$ be the reduction of $(X,F)$. 
Clearly, $Y$ is not $\mathbb{P}^2$. 
We also have $Y\not=\Sigma_d$ from \cite{Martens96} or $(\ref{eq:Hd})$. 
We firstly determine 
numerical possibilities of \#-minimal models of $(Y, G)$. 
Remark that $(K_Y+G)^2=(K_X+F)^2\leq3$ from Proposition~\ref{Prop:UBAdjDeg}. 
Keep the same notation as in \S\ref{Sec:RA}. 
Then we have the following:

%
%
\begin{lemma}\label{NT}
Let $f:X\to\mathbb{P}^1$ be a relatively minimal fibration of genus two 
on a smooth rational surface with $(K_X+F)^2>0$ and 
$(Y,G)$ the reduction of $(X,F)$. 
Then a \#-minimal model $(Y^\#,G^\#)$ of $(Y,G)$ is of general type. 
Furthermore, $(Y^\#,G^\#)$ satisfies one of the following$:$ 
\begin{align}
(a,\breve{b},N,m_1,\ldots, m_7,(K_X+F)^2)			&=(2,0,7,2,\ldots, 2,1),	\label{a2I}\\
(a,\breve{b},N,m_1,\ldots, m_{10},(K_X+F)^2)		&=(2,1,10,2,\ldots, 2,2),	\label{a2II}\\
(a,\breve{b},N,m_1,\ldots, m_7,m_8,m_9,(K_X+F)^2)	&=(4,0,9,3,\ldots, 3,2,2,2),\label{a4}\\
(a,\breve{b},N,m_1,\ldots, m_9,(K_X+F)^2)			&=(6,1,9,4,\ldots, 4,3),	\label{a6}\\
(a,\breve{b},N,m_1,\ldots, m_7,m_8,m_9,(K_X+F)^2)	&=(8,0,9,5,\ldots, 5,4,3,3).\label{a8}
\end{align}
In particular, $0\leq d\leq2$ holds, and the case of $d=1$ induces the other cases 
by performing an elementary transformation. 
\end{lemma}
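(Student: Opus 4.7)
My plan is to combine Proposition~\ref{Prop:UBAdjDeg}, which for $g=2$ yields $(K_X+F)^2\le 3$, with the identity $(K_X+F)^2+G^2+N=12$ from (\ref{Eq:RedForm}) and the lower bounds of Section~\ref{Sec:RA}, and then enumerate the finitely many admissible numerics.

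First I would rule out the special-type alternative. For $a\ge 4$ even, (\ref{eq:AdjDegSpEven}) gives $(K_X+F)^2\ge 8-8/a\ge 4$; for $a\ge 5$ odd, (\ref{eq:AdjDegSpOdd}) gives $(K_X+F)^2\ge 4(a-1)/(a+1)+1>3$; and for $a=3$ the latter becomes $(K_X+F)^2\ge 3$, with equality forcing $m_0=2$, $b=7$ and $m_i=2$ for all $i\ge 1$, so $N=12$ follows from the genus formula and (\ref{Eq:GSqSpS}) then yields $G^2=-3<0$, contradicting $G^2\ge 0$. Since $a=2$ does not admit special type, $(Y^\#,G^\#)$ must be of general type. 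The inequality (\ref{eq:AdjDegGen}) rules out odd $a$ in the general-type case as well, since it forces $(K_X+F)^2\ge 4$ for $a\ge 3$.

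For $a$ even in general type, Lemma~\ref{frak} with $\mathfrak n=0$ and $\mathfrak m=(a+2)/2$ specialises to $(K_X+F)^2\ge 2a(1+\breve b)/(a+2)$, which together with $(K_X+F)^2\le 3$ restricts $(a,\breve b)$ to a short explicit list. Writing $\mu_i=m_i-1\in\{1,\dots,a/2\}$, (\ref{Eq:AdjDegGenS}) and (\ref{Eq:gGenS}) pin down the moments $\sum\mu_i=(K_X+F)^2+4a-2+2\breve b$ and $\sum\mu_i^2=2a^2+2a\breve b-(K_X+F)^2$, so Cauchy--Schwarz supplies the lower bound $N\ge(\sum\mu_i)^2/\sum\mu_i^2$, while (\ref{Eq:RedForm}) supplies the upper bound $N\le 12-(K_X+F)^2$. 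These two pincers reduce $N$ to at most a couple of values in each admissible triple $(a,\breve b,(K_X+F)^2)$.

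I expect the delicate step to be excluding $a\ge 10$. For $\breve b\ge 1$ the Lemma~\ref{frak} bound already exceeds $3$; for $\breve b=0$ and $(K_X+F)^2=3$ the pincers force $N=9$, and after the change of variables $\nu_i=a/2-\mu_i\in\{0,\dots,a/2-1\}$ one needs nine non-negative integers with $\sum\nu_i=a/2-1$ and $\sum\nu_i^2=(a/2-1)^2-4$. The maximum value of $\sum\nu_i^2$ compatible with the prescribed linear sum is $(a/2-1)^2$, attained only by the singleton $(a/2-1,0,\dots,0)$, and the next achievable value is $(a/2-2)^2+1$, attained by $(a/2-2,1,0,\dots,0)$; the gap between them is $a-4\ge 6$ for $a\ge 10$, so the target is strictly trapped between two consecutive attainable values and no configuration exists. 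Analogous gap arguments handle $(K_X+F)^2\in\{1,2\}$, and inspection of the surviving small list $(a,\breve b)\in\{(2,0),(2,1),(4,0),(6,1),(8,0)\}$ recovers exactly the five numerical types (\ref{a2I})--(\ref{a8}). The last assertion $0\le d\le 2$ follows from $\breve b\ge\max\{0,(d-2)(a+2)/2\}$ combined with $\breve b\le 1$ in every surviving case, and a single elementary transformation centred at a point off or on the minimal section of $\Sigma_1$ interchanges $d=1$ models with those on $\Sigma_0$ or $\Sigma_2$.
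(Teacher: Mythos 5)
Your proposal is correct and recovers all five numerical types, but the combinatorial core is organised genuinely differently from the paper's. The exclusion of special type and of odd $a$ in general type coincides with the paper's (via $(\ref{eq:AdjDegSpEven})$, $(\ref{eq:AdjDegSpOdd})$ and $(\ref{eq:AdjDegGen})$); in your $a=3$ special-type step the precise value of $G^2$ depends on whether one sums from $i=0$ or $i=1$ in $(\ref{Eq:GSqSpS})$ (you get $-3$, a literal reading of that display gives $-7$), but it is negative either way, which is all that is needed against $(\ref{poG})$. For even $a$ in general type the paper re-applies Lemma~\ref{frak} with successively chosen $(\mathfrak{n},\mathfrak{m})$ to force $m_8=a/2$ and then $m_7=(a+2)/2$, reads off $G^2=3-\sum_{i\geq9}m_i$ to get $N=9$ and $m_9\leq3$, and pins $a\in\{4,8\}$ from the genus formula; you instead extract the two moment identities for $\mu_i=m_i-1$ (which I checked), squeeze $N$ between the Cauchy--Schwarz bound and $12-(K_X+F)^2$, and eliminate $a\geq10$ by showing the required $\sum\nu_i^2=s^2-4$ falls strictly in the gap of length $2s-2=a-4$ between the largest and second-largest sums of squares attainable with $\sum\nu_i=s=a/2-1$; this gap argument is sound, and the analogous bounds do dispose of $(K_X+F)^2\in\{1,2\}$ and of the stray triples $(a,\breve{b})=(2,2),(4,1),(6,0)$. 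Your route buys a uniform treatment of all even $a$ at once, at the cost of more integer bookkeeping; the paper's buys shorter displayed inequalities at the cost of several ad hoc choices of $(\mathfrak{n},\mathfrak{m})$. Your derivation of $d\leq2$ from $\breve{b}\geq(d-2)(a+2)/2$ together with $\breve{b}\leq1$ is a legitimate alternative to the paper's use of $\Delta_0.G^\#\geq0$. The one place you are thinner than the paper is the final elementary-transformation claim: the paper also checks that when $d=2$ no singular point of $G^\#$ lies on $\Delta_0$, so that the transformed pair is again a \#-minimal model with the same numerics; you should supply that verification.
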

%
%
\begin{proof}
Suppose that $(Y^\#,G^\#)$ is of special type. 
From $(\ref{eq:AdjDegSpEven})$ or $(\ref{eq:AdjDegSpOdd})$, 
we have $(K_Y+G)^2\geq4$ or $3+2(a-3)/(a+1)$ according as $a$ is even or not. 
Furthermore, if $(K_Y+G)^2=3$ holds, then we have $a=3$ and $m_0=\cdots=m_N=2$. 
However, it contradicts $(\ref{Eq:GSqSpS})$ and $(\ref{Eq:RedForm})$. 
Hence, $(Y^\#,G^\#)$ is of general type.

When $a$ is odd, from $(\ref{eq:AdjDegGen})$, we have $(K_Y+G)^2\geq4$, 
which is a contradiction. 
Put $a=2$. 
Then $m_1=\cdots=m_N=2$ and $N=3\breve{b}+7$ from $(\#2)$ and $(\ref{Eq:gGenS})$. 
The assumption that $(K_X+F)^2\geq1$ holds, $(\ref{poG})$ and $(\ref{Eq:RedForm})$ 
imply $N\leq11$. 
Hence we have $(\ref{a2I})$ and $(\ref{a2II})$. 
From now on, we concentrate on the case where $a\geq4$ and $a$ is even. 
By putting $\mathfrak{n}=0$ and $\mathfrak{m}=(a+2)/2$ in Lemma~\ref{frak}, 
we have 
\begin{align}
(K_Y+G)^2\geq 3+2a(\breve{b}-1)/(a+2)+(a-6)/(a+2). \label{bprime1}
\end{align}
Hence we have $a=4$ or $6$ when $\breve{b}\geq1$. 
Since the equality $(K_Y+G)^2=3$ in $(\ref{bprime1})$ with $\breve{b}=1$ holds, 
Lemma~\ref{frak} and $(\ref{Eq:gGenS})$ imply that $(Y^\#, G^\#)$ satisfy $(\ref{a6})$ 
when $a=6$ and $\breve{b}\geq1$. 
If $a=4$ and $\breve{b}\geq1$, then we also have $\breve{b}=1$ and $(K_Y+G)^2=3$, 
though the equality in $(\ref{bprime1})$ does not hold. 
Furthermore, we have $N\leq9$ from $(\ref{poG})$ and $(\ref{Eq:RedForm})$. 
Then $(\#2)$ and $(\ref{Eq:gGenS})$ imply $g\geq30-3N\geq3$, which is absurd.

We assume that $\breve{b}=0$. 
It follows from $(\#2)$ and $(\ref{Eq:gGenS})$ that $m_8\leq a/2$ and $N\geq9$. 
By putting $\mathfrak{n}=7$ and $\mathfrak{m}=(a-2)/2$ in Lemma~\ref{frak}, 
we have $(K_Y+G)^2\geq4$. 
Hence $m_8=a/2$. 
If $m_7\leq a/2$ holds, then we have $(K_Y+G)^2\geq3+(a-4)/a$ 
by putting $\mathfrak{n}=6$ and $\mathfrak{m}=a/2$ in Lemma~\ref{frak}. 
Here $(K_Y+G)^2=3$ implies that $(a,m_6,m_7)=(4,3,2)$ and $N\leq9$. 
In this case, $(\ref{Eq:gGenS})$ implies $g=13-N\geq4$, which is absurd. 
Thus $m_7=(a+2)/2$. 
Then we have $G^2=3-\sum^N_{i=9}m_i$ from $(\ref{Eq:gGenS})$ and $(\ref{Eq:GSqGenS})$. 
Therefore $N=9$ and $m_9\leq3$ hold. 
From $(\ref{Eq:gGenS})$, we have $(\ref{a4})$ or $(\ref{a8})$ according as $m_9=2$ or $3$.

We show the last statement in Lemma~\ref{NT}. 
For example, we consider $(\ref{a6})$. 
Then $G^\#\sim8\Delta_0+(9+4d)\Gamma$ holds 
and we have $d\leq2$ since $\Delta_0.G^\#=9-4d\geq0$. 
We concentrate on the case of $d=2$. 
Then, $G^\#$ has no singular points on the minimal section $\Delta_0$, 
since the intersection number of the strict transforms of $G^\#$ and $\Delta_0$ 
is also non-negative. 
We consider the fibre $\Gamma_1$ of $Y^\#$ passing through a singular point $p_1$ of $G^\#$. 
Perform the elementary transformation $\tau:(Y^\#, G^\#)\dashrightarrow({Y^\#}', {G^\#}')$ at $p_1$, 
then the composite of $\nu:(Y,G)\to (Y^\#, G^\#)$ and $\tau$ is also a birational morphism 
$\nu':(Y,G)\to ({Y^\#}', {G^\#}')$ contracting the strict transform of $\Gamma_1$ 
instead of the exceptional curve. 
Here ${Y^\#}'=\Sigma_1$ and $({Y^\#}', {G^\#}')$ is a \#-minimal model satisfying $(\ref{a6})$. 
Reversibly, perform the elementary transformation 
$\tau^{-1}:({Y^\#}', {G^\#}')\dashrightarrow(Y^\#, G^\#)$ 
at the point corresponding to the strict transform of $\Gamma_1$, 
then the original one is obtained from the \#-minimal model $({Y^\#}', {G^\#}')$. 
The case of $d=0$ is similar and simpler. 
\end{proof}
%
%

Next, we observe genus two fibrations in the view of double coverings according to \cite{Hori77}. 
Remark that the fixed part $Z$ of $|K_X+F|$ is vertical with respect to $f:X\to\mathbb{P}^1$ 
and $H^0(X,K_X+F-Z)\simeq H^0(F,K_F)$ by the restriction map (cf.\ \cite[Lemma~1.1]{KK}). 
Consider the rational map $f\times\Phi_{|K_X+F-Z|}:X\to W^\natural$ of degree two, 
where $W^\natural\simeq\varSigma_0$. 
Stein factorization of the morphism obtained by blowing up at the indeterminacy 
uniquely determines a finite double cover $\pi^\natural:X^\natural\to W^\natural$. 
Then, by \cite[Theorem~1]{Hori77}, 
the branch locus $B^\natural$ has only those singularities of types 
$(0)$, $(\mathrm{I}_k)$, $(\mathrm{II}_k)$, $(\mathrm{III}_k)$, $(\mathrm{IV}_k)$ and $(\mathrm{V})$ 
which are listed in \cite[Lemma~6]{Hori77}. 
Furthermore, 
\begin{align}
B^\natural\sim6\varDelta_0+(\epsilon+(K_X+F)^2+2)\varGamma,\ \ \ 
\epsilon=\sum_k\{n(\mathrm{I}_k)+n(\mathrm{III}_k)\}+n(\mathrm{V})\label{LECofB}
\end{align}
and 
\begin{align}
(K_X+F)^2=\sum_k\{(2k-1)(n(\mathrm{I}_k)+n(\mathrm{III}_k))+2k(n(\mathrm{II}_k)+n(\mathrm{IV}_k))\}+n(\mathrm{V}),
\label{THindex}
\end{align}
where $n(*)$ denotes the number of singularities of type $(*)$, 
from \cite[Theorem~3]{Hori77}. 
Let $\widetilde{\pi}:\widetilde{X}\to\widetilde{W}$ be the finite double cover obtained from 
the canonical resolution (as in \cite[\S3]{Hori77}) of $\pi^\natural:X^\natural\to W^\natural$. 
Remark that $\sigma:\widetilde{W}\to W^\natural$ factors through the composite 
$\sigma^\natural:W^\flat\to W^\natural$ of the blow-ups at exactly $2(K_X+F)^2$ points 
such that the finite double covering $X^\flat$ of $W^\flat$ branched along $B^\flat$ 
has at most rational double points as its singularities. 
Let $E_i$, $1\leq i\leq2(K_X+F)^2$ be the pull-back to $W^\flat$ 
of a $(-1)$-curve contracted by $\sigma^\natural$. 
We denote by $\sigma^\flat$ the composite of blow-ups 
such that $\sigma=\sigma^\natural\circ\sigma^\flat$. 
Then $\Phi_{|\varGamma|}\circ\sigma\circ\widetilde{\pi}:\widetilde{X}\to\mathbb{P}^1$ 
is a genus two fibration which contains exactly $(K_X+F)^2$ disjoint $(-1)$-curves in some fibres. 
In fact, the relatively minimal model is the original fibration $f:X\to\mathbb{P}^1$. 
We denote by $\varphi:\widetilde{X}\to X$ 
the composite of the blow-downs which contract the $(K_X+F)^2$ disjoint $(-1)$-curves. 
These rational maps gives a commutative diagram in Figure~\ref{fig:CD}. 
\begin{figure}[hbtp]
\begin{center}
\begin{picture}(270,170)
\setlength{\unitlength}{1.08358pt}
%
%
\put(230,155){\makebox(0,0){$X$}}
\put(235.5,93.5){\makebox(0,0){$f\times\Phi_{|K_X+F-Z|}$}}
\qbezier(227.5,148.253)(218.125,132.015)(208.75,115.777)
\qbezier(205.625,110.365)(196.25,94.127)(186.875,77.889)
\qbezier(183.75,72.476)(174.375,56.238)(165,40)
\qbezier[100](165,40)(165.696,43.206)(164.589,45.289)
\qbezier[100](165,40)(167.429,42.206)(169.786,42.289)
\put(143.5,70){\makebox(0,0){$\pi^\natural$}}
\qbezier(153.5,38.5)(153.5,68.5)(153.5,98.5)
\qbezier[100](153.5,38.5)(154.5,41)(156.5,42)
\qbezier[100](153.5,38.5)(152.5,41)(150.5,42)
\put(123,138){\makebox(0,0){$\varphi$}}
\qbezier(27.5,108.5)(125.128,130.144)(222.76,151.788)
\qbezier[100](222.76,151.788)(220.103,152.223)(218.693,153.959)
\qbezier[100](222.76,151.788)(220.535,150.271)(219.992,148.102)
\put(17.5,106){\makebox(0,0){$\widetilde{X}$}}
\qbezier(25,105)(50,105)(75,105)
\qbezier[100](75,105)(72.5,106)(71.5,108)
\qbezier[100](75,105)(72.5,104)(71.5,102)
\put(85,106.5){\makebox(0,0){$X^\flat$}}
\qbezier(82.5,38.5)(82.5,68.5)(82.5,98.5)
\qbezier[100](82.5,38.5)(83.5,41)(85.5,42)
\qbezier[100](82.5,38.5)(81.5,41)(79.5,42)
\qbezier(95,105)(120,105)(145,105)
\qbezier[100](145,105)(142.5,106)(141.5,108)
\qbezier[100](145,105)(142.5,104)(141.5,102)
\put(155,106.5){\makebox(0,0){$X^\natural$}}
\put(7.5,70){\makebox(0,0){$\widetilde{\pi}$}}
\qbezier(17.5,40)(17.5,69)(17.5,98)
\qbezier[100](17.5,40)(18.5,42.5)(20.5,43.5)
\qbezier[100](17.5,40)(16.5,42.5)(14.5,43.5)
\put(17.5,31.35){\makebox(0,0){$\widetilde{W}$}}
\put(50,43){\makebox(0,0){$\sigma^\flat$}}
\qbezier(25,30)(50,30)(75,30) 
\qbezier[100](75,30)(72.5,31)(71.5,33)
\qbezier[100](75,30)(72.5,29)(71.5,27)
\put(85,31.5){\makebox(0,0){$W^\flat$}}
\put(120,43){\makebox(0,0){$\sigma^\natural$}}
\qbezier(95,30)(120,30)(145,30) 
\qbezier[100](145,30)(142.5,31)(141.5,33)
\qbezier[100](145,30)(142.5,29)(141.5,27)
\put(155,31.5){\makebox(0,0){$W^\natural$}}
\qbezier[100](153,23.5)(151.581,21.212)(149.438,20.574)
\qbezier[100](153,23.5)(153.551,20.864)(155.347,19.532)
\qbezier(20,23.5)(20,5.5)(85,5.5)
\qbezier(153,23.5)(153,5.5)(85,5.5)
\put(85,-5){\makebox(0,0){$\sigma$}}
\end{picture}
\caption{}\label{fig:CD}
\end{center}
\end{figure}
The pull-back by $\widetilde{\pi}$ of the strict transform by $\sigma$ of $\epsilon$ fibres 
which form $B^\natural$'s singularities of types $(\mathrm{I}_k)$, $(\mathrm{III}_k)$ and $(\mathrm{V})$ 
are $\epsilon$ double $(-1)$-curves on $\widetilde{X}$. 
Let $\mathcal{E}$ be the sum of the $\epsilon$ $(-1)$-curves. 
In fact, $\varphi(\mathcal{E})$ forms simple base points of $|K_X+F-Z|$. 
From \cite[Lemmas~10 and 13]{Hori77}, we have 
\begin{align*}
(\sigma^\flat\circ\widetilde{\pi})^*(-K_{W^\flat})&\sim\varphi^*(2K_X-({K_X}^2)F),\\
(\sigma\circ\widetilde{\pi})^*\varDelta_0&\sim\varphi^*(K_X+F-Z)-\mathcal{E}
\end{align*}
and $(\sigma\circ\widetilde{\pi})^*\varGamma\sim\varphi^*F$. 
In fact, from Castelnuovo's rationality criterion, the branch divisor satisfies 
\begin{align}
\label{eq:Rationality}
h^0(W^\flat, -K_{W^\flat}+{K_X}^2( \sigma^\natural )^*\varGamma )=0.
\end{align}
Conversely, for a desired value of $(K_X+F)^2$ or $K_X^2$, we obtain a relatively minimal fibration 
$f:X\to\mathbb{P}^1$ of genus two curves from a reducible divisor $B^\natural$ on 
$W^\natural:=\mathbb{P}^1\times\mathbb{P}^1$ with $(\ref{LECofB})$ and $(\ref{THindex})$ 
by taking the finite double covering $\pi^\natural:X^\natural\to W^\natural$ 
whose branch divisor is $B^\natural$. 
Then $(\ref{eq:Rationality})$ implies that $X$ is rational from Castelnuovo's rationality criterion.

%
%
\begin{lemma}\label{Lem:a8}
A \#-minimal model satisfying $(\ref{a8})$ does not occur. 
\end{lemma}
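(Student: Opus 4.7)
The plan is to assume for contradiction that a \#-minimal model $(Y^\#,G^\#)$ satisfies $(\ref{a8})$, and derive an impossibility from the Horikawa double-cover description of the fibration.

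First, I would record the invariants forced by the data $(a,\breve b,N,m_1,\dots,m_9)=(8,0,9,5,5,5,5,5,5,5,4,3)$: formulas $(\ref{Eq:AdjDegGenS})$--$(\ref{Eq:GSqGenS})$ give $(K_Y+G)^2 = 3$, $g = 2$ and $G^2 = 0$. Because $G^2 = \#\mathrm{Bs}\,\Lambda_f = 0$, the pencil $\Lambda_f$ is base-point-free on $Y$, so $\mu\colon X\to Y$ is an isomorphism and $(X,F)=(Y,G)$. In particular $K_X^2 = K_{Y^\#}^2 - N = 8-9 = -1$ and $\rho(X)=11$.

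Next I would turn to the branch divisor. With $(K_X+F)^2 = 3$, formula $(\ref{THindex})$ admits only three numerical distributions of Horikawa singularities, all forcing $\epsilon\in\{1,3\}$, and $(\ref{LECofB})$ then places $B^\natural$ either in $|6\varDelta_0+6\varGamma|$ or in $|6\varDelta_0+8\varGamma|$ on $W^\natural\simeq\varSigma_0$. In each configuration the singularities lie on at most three vertical fibres, and the $2(K_X+F)^2 = 6$ infinitely near base points blown up by $\sigma^\natural$ cluster inside those fibres in a way dictated by the normal forms of \cite[Lemma~6]{Hori77}.

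The decisive step is Castelnuovo's criterion $(\ref{eq:Rationality})$. Substituting $K_X^2 = -1$, it demands $h^0(W^\flat,\,-K_{W^\flat}-(\sigma^\natural)^*\varGamma) = 0$; equivalently, the six infinitely near points blown up by $\sigma^\natural$ must impose independent conditions on the linear system $|2\varDelta_0+\varGamma|$ on $\varSigma_0$, which has projective dimension $5$. I would verify, one Horikawa configuration at a time, that the infinitely near structure of the $(I_k)/(III_k)/(V)$ clusters forces the six points to lie on a common bidegree-$(2,1)$ curve on $\varSigma_0$; the resulting positivity of $h^0$ contradicts the rationality of $X$, and case $(\ref{a8})$ is thereby excluded.

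The hardest part will be this final incidence check. Although the distributions of singularities are few, unwinding the precise infinitely near structure of each Horikawa type in \cite[Lemma~6]{Hori77} and matching it against the limited geometry of bidegree-$(2,1)$ curves on $\varSigma_0$ requires careful bookkeeping, and is where the asymmetry between $a=8$ and the smaller admissible cases $a=2,4,6$ really shows up.
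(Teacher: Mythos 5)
Your preliminary computations are correct and match the paper's setup: for the data $(\ref{a8})$ one indeed gets $(K_Y+G)^2=3$, $g=2$, $G^2=0$, hence $\mu$ an isomorphism, $K_X^2=-1$, $\rho(X)=11$, and the parity argument giving $\epsilon\in\{1,3\}$ is also sound. The gap is in the decisive step, and it is not merely a matter of unfinished bookkeeping: every quantity you propose to feed into the contradiction --- $(K_X+F)^2=3$, $K_X^2=-1$, $\rho(X)=11$, the admissible Horikawa distributions, and the rationality condition $(\ref{eq:Rationality})$ --- is \emph{identical} for the case $(\ref{a6})$, which does occur (it is case $(\mathrm{C})$ of Theorem~\ref{MT}, realized explicitly in Examples~\ref{Ex:Cmts} and \ref{Ex:CTrMWG}). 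Any argument built only from these invariants would exclude $(\ref{a6})$ as well, so it cannot be correct. In particular there is no reason the six points blown up by $\sigma^\natural$ should lie on a common curve of class $2\varDelta_0+\varGamma$; for an actual fibration $X$ is rational by hypothesis, so $(\ref{eq:Rationality})$ holds automatically, and you have supplied no mechanism by which the multiplicity data $(5,\dots,5,4,3)$ constrains the position of those six points.

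What is missing is a bridge carrying the specific $(\ref{a8})$ configuration into the double-cover picture, and this is exactly what the paper's proof supplies. It takes the $(-1)$-curve $e_{10}$ over the triple point $p_9$ --- a curve that exists only in the $(\ref{a8})$ configuration, with $e_{10}.F=3$ and hence $e_{10}.(2K_X+F)=1$ --- and pushes its strict transform forward to $W^\flat$. The intersection numbers with $\varGamma$ and $\varDelta_0$ force the image to be an irreducible reduced curve of class $3\varDelta_0+\varGamma$ or $3\varDelta_0+2\varGamma$ minus exceptional classes, and in either case its degree against $-K_{W^\flat}$ is at least $2$, contradicting the value $1$ computed upstairs by the projection formula. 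If you want to salvage your approach, you must likewise isolate a curve (or curve class) on $X$ that exists precisely because of the $(\ref{a8})$ data and trace it through $\sigma\circ\widetilde{\pi}$; the rationality criterion alone will not do it.
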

%
%
\begin{proof}
Suppose that there exists a relatively minimal fibration $f:X\to\mathbb{P}^1$ of genus two 
on a smooth rational surface such that a \#-minimal model $(Y^\#, G^\#)$ 
of $(X,F)$ satisfies $(\ref{a8})$. 
Remark that singular points $p_1,\ldots,p_8$ of $G^\#$ 
are not any infinitely near point of the triple point $p_9$. 
Let $e_{10}$ be the $(-1)$-curve corresponding by $\nu$ to $p_9$. 
In particular, the strict transform $\widehat{e_{10}}$ to $\widetilde{X}$ of $e_{10}$ is 
also smooth and irreducible. 
By applying the projection formula, we have 
\begin{align}
(\sigma^\flat\circ\widetilde{\pi})_*\widehat{e_{10}}.(-K_{W^\flat})=
\widehat{e_{10}}.\varphi^* (2K_X+F)=e_{10}.(2K_X+F)=1.\label{e10-K}
\end{align}
We remark that $e_{10}$ is not a component of any fibre of $f$. 
Therefore, we have 
\begin{align*}
(\sigma\circ\widetilde{\pi})_*\widehat{e_{10}}.\varDelta_0=e_{10}.(K_X+F)-e_{10}.Z-\widehat{e_{10}}.\mathcal{E}\leq e_{10}.(K_X+F)=2
\end{align*}
by applying the projection formula. 
Furthermore, $(\sigma\circ\widetilde{\pi})_*\widehat{e_{10}}.\varGamma=3$ holds. 
This implies that $\sigma\circ\widetilde{\pi}$ is birational on $\widehat{e_{10}}$. 
Therefore, $(\sigma\circ\widetilde{\pi})_*\widehat{e_{10}}$ is also irreducible and reduced. 
We also have $(\sigma\circ\widetilde{\pi})_*\widehat{e_{10}}\sim3\varDelta_0+\varGamma$ or 
$3\varDelta_0+2\varGamma$. 
In the former case, 
\begin{align*}
(\sigma^\flat\circ\widetilde{\pi})_*\widehat{e_{10}}\sim3(\sigma^\natural)^*\varDelta_0+(\sigma^\natural)^*\varGamma-\sum_{i=1}^6m_iE_i
\end{align*}
with $m_i=0$ or $1$. 
Then $(\sigma^\flat\circ\widetilde{\pi})_*\widehat{e_{10}}.(-K_{W^\flat})=8-\sum_{i=1}^6m_i\geq2$, 
which contradicts $(\ref{e10-K})$. 
Next, consider the latter case, and put 
\begin{align*}
(\sigma^\flat\circ\widetilde{\pi})_*\widehat{e_{10}}\sim3(\sigma^\natural)^*\varDelta_0+2(\sigma^\natural)^*\varGamma-\sum_{i=1}^6m_iE_i
\end{align*}
with $0\leq m_i\leq2$. 
Since the arithmetic genus of $(\sigma\circ\widetilde{\pi})_*\widehat{e_{10}}$ is two, 
$\#\{m_i|m_i=2\}=2$ holds. 
This implies $(\sigma^\flat\circ\widetilde{\pi})_*\widehat{e_{10}}.(-K_{W^\flat})=10-\sum_{i=1}^6m_i\geq2$, 
which is also a contradiction to $(\ref{e10-K})$. 
\end{proof}

For a relatively minimal fibration $f:X\to\mathbb{P}^1$ of curves of genus two 
on a smooth rational surface with $(K_X+F)^2>0$, 
we can take a \#-minimal model $(Y^\#, G^\#)$ of the reduction $(Y, G)$ 
in order that $Y^\#=\Sigma_1$ from Lemma~\ref{NT}. 
Let $\nu_0:Y\to\mathbb{P}^2$ be the composite of $\nu:Y \to Y^\#$ and 
the blow-down $Y^\#\to\mathbb{P}^2$ contracting the minimal section $\Delta_0$. 
We denote by $\upsilon_0:X\to\mathbb{P}^2$ 
the composite of the natural morphism $\mu:X \to Y$ and $\nu_0:Y\to\mathbb{P}^2$. 
When $(Y^\#, G^\#)$ satisfies $(\ref{a2I})$, $(\ref{a2II})$, $(\ref{a4})$ and $(\ref{a6})$ respectively, 
$\upsilon_0$ is a birational morphism giving $(\mathrm{A})$, $(\mathrm{B}1)$, $(\mathrm{B}2)$ 
and $(\mathrm{C})$ in {\upshape Theorem~\ref{MT}}. 
Conversely, a birational morphism $\upsilon_0:X\to\mathbb{P}^2$ as in {\upshape Theorem~\ref{MT}} 
gives a \#-minimal model of the reduction of $(X, F)$ by blowing $(\upsilon_0(X), \upsilon_0(F))$ 
up at a singular point of $\upsilon_0(F)$ with a maximal multiplicity.

\begin{lemma}\label{Barabara}
When $\upsilon_0:X\to\mathbb{P}^2$ gives $(\mathrm{B}1)$ and $(\mathrm{C})$ 
in {\upshape Theorem~\ref{MT}} respectively, $F.C\geq2$ and $4$ for all $(-1)$-curves $C$ on $X$. 
In particular, $f:X\to\mathbb{P}^1$ has a $(-1)$-section if and only if 
$\upsilon_0$ gives $(\mathrm{A})$ or $(\mathrm{B}2)$ in {\upshape Theorem~\ref{MT}}. 
Furthermore, for a given $f$, all of \#-minimal models of the reduction 
satisfy one of the four $(\ref{a2I})$, $(\ref{a2II})$, $(\ref{a4})$ and $(\ref{a6})$. 
\end{lemma}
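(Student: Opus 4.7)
The first assertion is a purely numerical claim about $(-1)$-curves on $X$. My plan is to write any $(-1)$-curve $C$ in the basis given by $\upsilon_0$ as $C\sim a\ell-\sum_{i}b_{i}e_{i}$. The adjunction identities $C^{2}=K_X.C=-1$, together with $K_X\sim-3\ell+\sum_{i}e_{i}$, yield the Diophantine relations $\sum_{i}b_{i}=3a-1$ and $\sum_{i}b_{i}^{2}=a^{2}+1$; relative minimality of $f$ forbids $F.C=0$. When $a=0$, these relations force $C=e_{j}$ for some $j$, so $F.e_{j}$ equals the coefficient of $-e_{j}$ in the stated class of $F$, visibly $\geq2$ in (B1) and $\geq4$ in (C). For $a\geq1$ I plan to compute $F.C$ directly from the class of $F$, obtaining $F.C=a-b_{1}+2$ in (B1) and $F.C=a-b_{1}+4$ in (C). Setting $F.C$ equal to each hypothetical small value determines $b_{1}$; the residual identity $\sum_{i\geq2}b_{i}^{2}=(a^{2}+1)-b_{1}^{2}$ is then strictly negative for $a\geq1$, a contradiction. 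This yields the sharp bounds $F.C\geq2$ in (B1) and $F.C\geq4$ in (C).

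One direction of the \emph{iff} is then immediate. For the converse, I plan to produce a $(-1)$-section in (A) and (B2) directly. The coefficient $m_{j}$ of $-e_{j}$ in $F$ equals the multiplicity of the strict transform of $F$ at the center $p_{j}$ of the $j$-th blow-up building $\upsilon_0$. The standard blow-up inequality $\mathrm{mult}_{p_{k}}\widetilde{F}\leq I_{p_{k}}(\widetilde{F},E_{j})\leq \widetilde{F}.E_{j}=m_{j}$, valid when $p_{k}$ lies on the exceptional divisor $E_{j}$ of an earlier blow-up, forces the sequence $\{m_{j}\}$ to be non-increasing along every chain of the blow-up forest of $\upsilon_0$. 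Consequently $S:=\{j:m_{j}=1\}$ is closed under taking descendants in this forest; since $S$ is non-empty in both (A) ($|S|=4$) and (B2) ($|S|=1$), a finite descent forces $S$ to contain a leaf $j^{\ast}$. For such a leaf no subsequent blow-up is centered on $E_{j^{\ast}}$, so the class $e_{j^{\ast}}$ is represented by an irreducible $(-1)$-curve on $X$ with $F.e_{j^{\ast}}=m_{j^{\ast}}=1$, the desired $(-1)$-section.

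The final assertion will then follow by combining the previous two steps with Lemma~\ref{NT} and Lemma~\ref{Lem:a8}. Since $\rho(X)=14-(K_X+F)^{2}$ is an invariant of $f$, so is $(K_X+F)^{2}$, and Lemma~\ref{NT} (with (\ref{a8}) excluded by Lemma~\ref{Lem:a8}) leaves only (\ref{a2I}) as a possible numerical type when $(K_X+F)^{2}=1$, only (\ref{a6}) when $(K_X+F)^{2}=3$, and either (\ref{a2II}) or (\ref{a4}) when $(K_X+F)^{2}=2$. The construction of $\upsilon_0$ from a \#-minimal model recalled just before Lemma~\ref{Barabara} matches (\ref{a2I})$\leftrightarrow$(A), (\ref{a2II})$\leftrightarrow$(B1), (\ref{a4})$\leftrightarrow$(B2) and (\ref{a6})$\leftrightarrow$(C), and the two middle options are separated by the intrinsic property ``$f$ admits a $(-1)$-section'' established in the second paragraph. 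Hence all \#-minimal models of $(Y,G)$ share the same numerical type from (\ref{a2I})--(\ref{a6}). The main technical obstacle will be verifying cleanly the non-increasing-multiplicity step of the second paragraph, which is a geometric fact about the blow-up sequence defining $\upsilon_0$ rather than a formal consequence of the class formula; once it is in place, the remaining combinatorics is straightforward.
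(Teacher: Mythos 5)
Your proposal is correct, but the route to the first (and hardest) assertion differs from the paper's. The paper observes that $F+4K_X\sim\ell-e_1$ in case $(\mathrm{C})$ (resp.\ $F+2K_X\sim\ell-e_1$ in case $(\mathrm{B}1)$) and that $|\ell-e_1|$ is a \emph{base-point-free} pencil because the quintuple (resp.\ triple) point $\upsilon_0(e_1)$ is not infinitely near any other singular point of $\upsilon_0(F)$; nefness of $\ell-e_1$ then gives $F.C\geq-4K_X.C=4$ (resp.\ $\geq2$) in one line. You instead derive the same inequality $(\ell-e_1).C=a-b_1\geq0$ purely from the Diophantine constraints $\sum b_i=3a-1$, $\sum b_i^2=a^2+1$, $a\geq0$: if $b_1\geq a+1$ then $b_1^2>(a^2+1)$ for $a\geq1$, and the case $a=0$ forces $C=e_j$. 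This is slightly longer but buys independence from the geometric input that $\upsilon_0(e_1)$ is an ``isolated'' singular point, so it is arguably more robust. For the existence of a $(-1)$-section in cases $(\mathrm{A})$ and $(\mathrm{B}2)$, the paper notes that $e_{11}$ (in $(\mathrm{B}2)$) must be irreducible because $F.(e_{11}-e_i)<0$ for $i<11$ while $F$ is nef, and says case $(\mathrm{A})$ is ``the same''; your descent through the forest of infinitely near points to a leaf of $S=\{j:m_j=1\}$ is a cleaner way to handle $(\mathrm{A})$, where $F.(e_{12}-e_i)=0$ for $i\in\{9,10,11\}$ and the paper's literal argument does not immediately apply to $e_{12}$ alone. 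Your third paragraph (separating $(\ref{a2II})$ from $(\ref{a4})$ by the intrinsic invariants $(K_X+F)^2$ and the existence of a $(-1)$-section) coincides with the paper's argument. The multiplicity-non-increasing step you flag as the main obstacle is indeed the standard fact that $\operatorname{mult}_{p_k}\widetilde{F}\leq\widetilde{F}.E_j=m_j$ for $p_k\in E_j$, together with $m_k\geq1$ for every centre appearing in the classes $(\mathrm{A})$ and $(\mathrm{B}2)$, so it closes without difficulty.
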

\begin{proof}
Assume that $\upsilon_0:X\to\mathbb{P}^2$ gives $(\mathrm{C})$ in Theorem~\ref{MT}. 
Since $\upsilon_0(e_1)$ is not any infinitely near point of 
another singular point of $\upsilon_0(F)$, we have a base-point-free pencil $|\ell-e_1|$, 
where $\ell$ and $e_1$ denote the same in Theorem~\ref{MT}. 
It implies that $(\ell-e_1).C\geq0$ for all $(-1)$-curves $C$ on $X$. 
Hence, we have $F.C=((\ell-e_1)-4K_X).C\geq-4K_X.C=4$. 
Therefore, $f$ indeed have no $(-1)$-section. 
The case of $(\mathrm{B}1)$ in Theorem~\ref{MT} is the quite same argument. 
When $\upsilon_0$ gives $(\mathrm{B}2)$ in Theorem~\ref{MT}, 
$e_{11}$ must be irreducible, since $F.(e_{11}-e_i)<0$ for all $i<11$. 
Thus $e_{11}$ is a $(-1)$-section of $f$. 
In the same way, $f$ has at least one $(-1)$-section 
when $\upsilon_0$ gives $(\mathrm{A})$ in Theorem~\ref{MT}.

Consider $\rho(X)$. 
From Lemmas~\ref{NT} and \ref{Lem:a8}, 
all of \#-minimal models of the reduction of $(X,F)$ with $(K_X+F)^2=1$ and $3$ 
satisfy $(\ref{a2I})$ and $(\ref{a6})$, respectively. 
Furthermore, in the case of $(K_X+F)^2=2$, a \#-minimal model satisfies 
$(\ref{a4})$ or $(\ref{a2II})$ according as $f$ has a $(-1)$-section or not. 
\end{proof}

Remark that a \#-minimal model of the reduction of $(X, F)$ is not unique in general 
for a relatively minimal fibration $f:X\to\mathbb{P}^1$ of genus two 
on a rational surface with $(K_X+F)^2>0$. 
Hence a birational morphism $\upsilon_0:X\to\mathbb{P}^2$ as before Lemma~\ref{Barabara} 
is also not unique. 
However, from Lemma~\ref{Barabara}, the linear equivalence class of $F$ is one of the four 
in Theorem~\ref{MT} by 
any birational morphism $\upsilon_0:X\to\mathbb{P}^2$ 
which gives a \#-minimal model of the reduction of $(X, F)$.

Let $\Gamma_X$ be the pull-back of $\Gamma_Y$ to $X$. 
In fact, $|\Gamma_X|$ is that of $|\Gamma|$ on $Y^\#$. 
Indeed, the base-point-free pencil $|\Gamma_X|$ of rational curves 
has a minimality for $F$ as follows:

\begin{lemma}\label{MinOfAdjDeg}
Keep the notation as above. 
Let $R$ be a general member of a base-point-free pencil of rational curves on $X$. 
Then $F.R \geq F.\Gamma_X$. 
Furthermore, $F.R=F.\Gamma_X$ if and only if $R \sim \Gamma_X$ 
when $\upsilon_0$ gives $(\mathrm{B}1)$ or $(\mathrm{C})$ in {\upshape Theorem~\ref{MT}}. 
\end{lemma}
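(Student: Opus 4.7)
The plan is to transport the problem from $X$ to $\mathbb{P}^2$ through the birational morphism $\upsilon_0:X\to\mathbb{P}^2$ supplied by Theorem~\ref{MT} and let the explicit class of $F$ do the work. First, since $R$ is smooth, irreducible and rational with $R^2=0$, while every $\upsilon_0$-exceptional irreducible curve is a $(-n)$-curve with $n\geq1$, $R$ cannot be contracted by $\upsilon_0$. Hence, in the basis $\ell,e_1,\ldots,e_n$ of $\mathrm{NS}(X)$ associated to $\upsilon_0$, one may write $R\sim d_0\ell-\sum_j m_j e_j$ with $d_0=\deg\upsilon_0(R)\geq1$ and $m_j\geq0$ equal to the multiplicity of $\upsilon_0(R)$ at the (possibly infinitely near) point $p_j$. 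From $R^2=0$ and $K_X.R=-2$ these invariants satisfy $\sum_j m_j^2=d_0^2$ and $\sum_j m_j=3d_0-2$; in particular $m_1\leq d_0$.

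Next I would substitute the explicit class of $F$ from Theorem~\ref{MT} into $F.R$ and apply these two identities. A one-line simplification in each case yields $F.R=4+\sum_{i=9}^{12}m_i$ in (A), $F.R=4+(d_0-m_1)$ in (B1), $F.R=6+m_9+m_{10}+2m_{11}$ in (B2), and $F.R=8+(d_0-m_1)$ in (C). Since every remainder on the right is non-negative, the inequality $F.R\geq a+2=F.\Gamma_X$ follows in all four cases.

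For the ``only if'' direction only (B1) and (C) require argument. In both, equality forces $m_1=d_0$, and then $\sum_j m_j^2=d_0^2$ yields $m_j=0$ for every $j\neq1$, so that $R\sim d_0(\ell-e_1)$. To finish I would identify $\ell-e_1$ with $\Gamma_X$: taking $Y^\#=\Sigma_1$, the point to which $\Delta_0$ contracts under the blow-down $\Sigma_1\to\mathbb{P}^2$ has multiplicity $G^\#.\Delta_0$ in $\upsilon_0(F)$, equal to $3$ in (B1) and $5$ in (C), which matches the coefficient of $e_1$ in the class of $F$; since $\Gamma_X$ is the pull-back of a fiber of the ruling of $\Sigma_1$, this forces $\Gamma_X\sim\ell-e_1$. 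Finally, $|d_0\Gamma_X|$ is the pull-back of $|\mathcal{O}_{\mathbb{P}^1}(d_0)|$ by the morphism defined by $|\Gamma_X|$, so every member is a sum of $d_0$ fibers of $|\Gamma_X|$; irreducibility of the general $R$ then forces $d_0=1$ and $R\sim\Gamma_X$. The one conceptual point is this identification of $\ell-e_1$ with $\Gamma_X$, which genuinely fails in (A) and (B2) because there $\Delta_0$ contracts to one of several interchangeable singular points of $\upsilon_0(F)$---and that failure is exactly what allows the equality $F.R=F.\Gamma_X$ to occur without $R\sim\Gamma_X$ in those two cases.
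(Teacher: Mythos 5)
Your argument is correct and is in substance the paper's own proof: the paper condenses your case-by-case expansion into the identity $F\sim-\tfrac{a+2}{2}K_X+\Gamma_X$ with $\Gamma_X\sim\ell-e_1$ in cases $(\mathrm{B}1)$ and $(\mathrm{C})$, so that $F.R=(a+2)+\Gamma_X.R$ with $\Gamma_X.R=d_0-m_1\geq0$, and treats the equality case by observing that $\Gamma_X.R=0$ forces the irreducible curve $R$ with $R^2=0$ to be a whole fibre of the ruling $|\Gamma_X|$. Your remainder terms are exactly these intersection numbers (and the analogous ones $e_i.R$ in cases $(\mathrm{A})$ and $(\mathrm{B}2)$), so the two proofs coincide up to notation.
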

\begin{proof}
Let $\upsilon_0:X\to\mathbb{P}^2$ be a birational morphism giving $(\mathrm{C})$ in Theorem~\ref{MT}. 
From the assumption $R^2=0$, 
we have $F.R=(-4K_X+\Gamma_X).R=8+\Gamma_X.R\geq8=F.\Gamma_X$. 
Here $F.R=F.\Gamma_X$ if and only if $\Gamma_X.R=0$, which implies that 
a fibre of the ruling defined by $|\Gamma_X|$ contains $R$. 
Hence we have $R \sim \Gamma_X$. 
We see the case of $(\mathrm{B}1)$ in Theorem~\ref{MT} by the quite same argument. 
The other cases are also similar. 
\end{proof}

We finally show the last two statements in Theorem~\ref{MT} as follows:  
Let $\upsilon_0:X\to\mathbb{P}^2$ be a birational morphism as in {\upshape Theorem~\ref{MT}}. 
We restrict ourselves to the case of $(\mathrm{C})$, 
since the other cases are similar and simpler. 
At first, we suppose that there exists a birational morphism $\upsilon_0':X\to\mathbb{P}^2$ 
causing $\deg\upsilon_0'(F)\leq12$. 
Remark that a linear projection whose centre is a singular point of $\upsilon_0'(F)$ 
induces a ruling on $X$. 
Therefore, multiplicities of singular points of $\upsilon_0'(F)$ are at most 
$(\deg\upsilon_0'(F)-8)$ from Lemma~\ref{MinOfAdjDeg}. 
We have a \#-minimal model which is different from $(\ref{a6})$ 
by blowing $(\upsilon_0'(X), \upsilon_0'(F))$ up at a singular point of $\upsilon_0'(F)$. 
It contradicts Lemma~\ref{Barabara}.

Next, we assume that there exists another birational morphism 
$\upsilon_0':X\to\mathbb{P}^2$ making $\deg\upsilon_0'(F)=13$. 
In the same way as above, multiplicities of singular points of $\upsilon_0'(F)$ are at most five. 
On the other hand, from Lemma~\ref{Barabara}, they are at least four. 
Furthermore, $\rho(X)=11$ implies 
that the number of singular points of $\upsilon_0'(F)$ is also exactly ten. 
Since the genus of $F$ is two, 
singularities of $\upsilon_0'(F)$ must be the same as $\upsilon_0(F)$'s by considering $(\ref{Eq:gGenS})$.

%
%

\smallskip

This completes the proof of Theorem~\ref{MT}. 
\hspace*{\fill}$\square$

\begin{remark}
Let $f:X\to\mathbb{P}^1$ be a fibred rational surface of genus two and 
$\widetilde{X}$ as before {\upshape Lemma~\ref{Lem:a8}}, 
which is uniquely determined by the relative canonical map. 
Assume that $\rho(X)\not=14$ or Mordell-Weil group of $f$ is non-trivial. 
Then, from {\upshape Theorem~\ref{MT}} and \cite{Kitagawa5}, 
$\widetilde{X}$ can be obtained by blowing $\mathbb{P}^2$ up at thirteen points. 
\end{remark}
%
%

\subsection{\#-minimal models}\label{Sec:OM}
Let $f:X\to\mathbb{P}^1$ be as in Theorem~\ref{MT} and $(Y, G)$ the reduction of $(X, F)$. 
By contracting a $(-1)$-curve whose intersection number with $G$ is the smallest among 
all of $(-1)$-curves on $Y$, we have a pair $(Y_1, G_1)$ as the image of $(Y, G)$. 
By performing the same procedure for $(Y_1, G_1)$, we get a pair $(Y_2, G_2)$. 
In this way, $Y_{(\rho(Y)-2)}$ becomes a relatively minimal model $\Sigma_d$ of a ruling on $X$ 
such that the image of $F$ is simple in the view of its singular points, though there exist 
many rulings on $X$ and relatively minimal models in general.

In fact, such a model is essentially a \#-minimal model of $(Y, G)$ as follows: 
Let $\nu:Y\to\Sigma_d$ be the birational morphism giving the above model. 
When $d=0$ and $\Gamma.\nu_*G>\Delta_0.\nu_*G$, for simplicity, 
change the considered ruling for the other one. 
This is non-essential since contracted $(-1)$-curves are kept. 
First of all, we remark that $\nu_*G$ has singularities. 
Let $p_i$ be the singular points of $\nu_*G$ including infinitely near ones, 
and let $m_i$ be the multiplicity of $\nu_*G$ at $p_i$. 
We may assume that $m_i \geq m_{i+1}$ for simplicity. 
Put $a=\nu^*\Gamma.(K_Y+G)=\Gamma.(K_{\Sigma_d}+\nu_*G)$ and $b=(a+2)d+\Delta_0.\nu_*G$. 
Then $(\#1)$ is satisfied. 
We suppose that $(\Gamma.\nu_*G)/2<m_1$, which induces a contradiction as follows: 
We consider the fibre $\Gamma_1$ of $\Sigma_d$ 
passing through a singular point $p_1$ of $\nu_*G$. 
Furthermore, we perform the elementary transformation 
$\tau:\Sigma_d\dashrightarrow \Sigma_{d+1}$ or $\Sigma_{d-1}$ at $p_1$ 
according as $p_1$ lies on $\Delta_0$ or not. 
Then the composite $\tau\circ\nu:Y\to\Sigma_{d+1}$ or $\Sigma_{d-1}$ is also 
a birational morphism contracting the strict transform $\widehat{\Gamma_1}$ of $\Gamma_1$ 
instead of the exceptional curve. 
The intersection number of $\widehat{\Gamma_1}$ and the image of $G$ is 
$(\nu_*G.\Gamma-m_1)$, which contradicts the procedure of $\nu$. 
Similarly, we have $m_1\leq b-(a+2)$ when $d=1$. 
Hence, $(\Sigma_d, \nu_*G)$ must be a \#-minimal model of $(Y,G)$. 
From Lemmas~\ref{NT}, \ref{Lem:a8} and \ref{Barabara}, it satisfies 
one of the four $(\ref{a2I})$, $(\ref{a2II})$, $(\ref{a4})$ and $(\ref{a6})$. 
In fact, the converse holds as follows:



\begin{proposition}\label{Minimini}
Let $f:X\to\mathbb{P}^1$ denote a fibred rational surface of genus two. 
Assume that $(K_X+F)^2>0$. 
Let $(Y, G)$ be the reduction of $(X, F)$ and $\nu:Y\to\Sigma_d$ a birational morphism. 
When $d=0$ and $\Gamma.\nu_*G>\Delta_0.\nu_*G$, 
change the considered ruling for the other one. 
Then, $\nu$ is associated with the procedure as at the start of {\upshape \S\ref{Sec:OM}} 
if and only if 
$(\Sigma_d, \nu_*G)$ is a \#-minimal model of $(Y, G)$, which satisfies 
one of the four $(\ref{a2I})$, $(\ref{a2II})$, $(\ref{a4})$ and $(\ref{a6})$. 
In particular, $d\leq2$ and 
any birational morphism $\upsilon_0:X\to\mathbb{P}^2$ in {\upshape Theorem~\ref{IntMT}} 
or $\ref{MT}$ contracts step by step a $(-1)$-curve whose intersection number with $F$ 
is the smallest among all $(-1)$-curves. 
\end{proposition}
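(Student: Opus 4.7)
The forward implication ($\Rightarrow$) is essentially the content of the discussion preceding the statement: the procedure produces a relatively minimal model on which the image of $G$ automatically satisfies ($\#1$) and ($\#2$), hence a \#-minimal model, which by Lemmas~\ref{NT} and~\ref{Lem:a8} must be of one of the four types (\ref{a2I})--(\ref{a6}). For the converse ($\Leftarrow$), given $\nu\colon Y\to\Sigma_d$ whose image realizes a \#-minimal model of one of the four listed types, I would exhibit an ordering of the $N$ exceptional components of $\nu$ which executes the procedure.

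The argument is by induction on $N$. At each intermediate surface $Y_k$ between $Y$ and $\Sigma_d$, I must produce a $(-1)$-curve among the remaining exceptional components of $\nu$ that achieves the minimum intersection with the strict transform $G_k$ of $G$ among all $(-1)$-curves on $Y_k$. The $(-1)$-curves on $Y_k$ split into the leaf exceptional curves of the remaining tower (with intersection $m_i$), the strict transforms of fibres of $\Sigma_d$ passing through a single remaining singular point $p_i$ (with intersection $(a+2)-m_i$), and, when $d=1$, the strict transform of $\Delta_0$ (with intersection $b-(a+2)$ when $\Delta_0$ avoids the remaining singular points). A direct comparison across cases (\ref{a2I})--(\ref{a6}) shows that a leaf always realises the minimum: in (\ref{a2I}), (\ref{a2II}) and (\ref{a6}) all $m_i$ coincide, while in (\ref{a4}) one contracts the two double-point leaves (intersection $2$) before any triple-point leaf. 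Contracting such a leaf yields an intermediate surface on which the same analysis applies with one fewer point in the tower, so the induction closes.

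The bound $d\leq 2$ is immediate from Lemma~\ref{NT}. For the assertion about $\upsilon_0$, by the last sentence of Lemma~\ref{NT} I may assume the chosen \#-minimal model sits over $\Sigma_1$, so that $\upsilon_0$ factors as $X\xrightarrow{\mu}Y\xrightarrow{\nu}\Sigma_1\to\mathbb{P}^2$, the last morphism being the contraction of $\Delta_0$. The block $\mu$ contracts $G^2$ disjoint $(-1)$-sections that resolve the base points of $\Lambda_f$, each with $F.E=1$, which is the minimum possible since relative minimality of $f$ excludes any $(-1)$-curve contained in a fibre. The middle block $\nu$ is ordered by the previous paragraph. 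The last step contracts $\Delta_0$, the unique $(-1)$-curve on $\Sigma_1$, so the minimality is trivial. The concatenation yields the required ordering.

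The main obstacle I expect is in the induction step: I need to rule out any ``spurious'' $(-1)$-curve on $Y_k$ outside my three listed classes with strictly smaller intersection with $G_k$, since new $(-1)$-curves can a priori emerge after each contraction (for instance, a $(-2)$-curve meeting the contracted leaf transversally acquires self-intersection $-1$). The numerical rigidity of cases (\ref{a2I})--(\ref{a6}) together with the minimality property of $|\Gamma_X|$ furnished by Lemma~\ref{MinOfAdjDeg}, which bounds from below the intersection of $F$ with any $(-1)$-curve defining a base-point-free pencil of rational curves, should keep this verification bounded.
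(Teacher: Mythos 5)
Your skeleton matches the paper's: the forward direction is the discussion opening \S\ref{Sec:OM}, and the converse amounts to checking that on every intermediate surface between $Y$ and $\Sigma_d$ the exceptional leaf being contracted realises the minimum of the intersection number with the image of $G$ over \emph{all} $(-1)$-curves. But the step you yourself flag as the ``main obstacle'' --- ruling out $(-1)$-curves outside your three listed classes --- is precisely where the proof lives, and the tool you propose cannot close it. Lemma~\ref{MinOfAdjDeg} concerns general members $R$ of base-point-free pencils, i.e.\ curves with $R^2=0$; it says nothing about $(-1)$-curves. Moreover the $(-1)$-curves on an intermediate $Y_k$ are genuinely not exhausted by leaves, strict transforms of fibres through one $p_i$, and $\Delta_0$: strict transforms of sections or of members of $|k\Delta_0+l\Gamma|$ passing through several of the $p_i$ also become $(-1)$-curves, and there is no finite list to check. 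So the induction step, as written, is not a proof.

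What closes the gap in the paper is a global numerical lower bound valid for \emph{every} $(-1)$-curve at once, different in each case. For $(\mathrm{B}1)$ and $(\mathrm{C})$, Lemma~\ref{Barabara} gives $F.C\geq2$, resp.\ $4$, for all $(-1)$-curves $C$, because $F+2K_X$, resp.\ $F+4K_X$, is nef; the analogous class $G_k+\tfrac{a+2}{2}K_{Y_k}$ is the pull-back of $\Gamma$ on each intermediate $Y_k$, so the same bound persists there, and since every leaf meets the image of $G$ in exactly $2$, resp.\ $4$, minimality is automatic at every stage. For $(\mathrm{A})$, the definition of the reduction forces $(K_Y+G).C\geq1$, hence $C.G\geq2$, again matching the leaves. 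For $(\mathrm{B}2)$ one contracts the two double-point leaves first (minimum $2$ by the same reduction argument), reaching $(Z,H)$ with $H\sim-3K_Z$, whence $D.H=3$ for every $(-1)$-curve $D$ on $Z$ and the seven remaining triple-point leaves are minimal for free. Finally, the paper reduces $d\neq1$ to $d=1$ via the elementary-transformation argument at the end of the proof of Lemma~\ref{NT}, rather than running the argument separately for each $d$; you should do the same, since your treatment of the $d=0$ and $d=2$ rulings is not spelled out.
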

\begin{proof}
We recall the proof of the last statement in Lemma~\ref{NT}. 
Then we only have to show the case of $d=1$ in Proposition~\ref{Minimini}, 
that is, the last statement.

When $\upsilon_0$ gives $(\mathrm{B}1)$ or $(\mathrm{C})$, 
$\nu$ is associated with the procedure as at the start of {\upshape \S\ref{Sec:OM}} 
from Lemma~\ref{Barabara}. 
When $\upsilon_0$ gives $(\mathrm{A})$ or $(\mathrm{B}2)$, 
the definition of the reduction implies that $C.G\geq2$ for any $(-1)$-curve $C$ on $Y$. 
Hence, $\upsilon_0$ giving $(\mathrm{A})$ is associated with the procedure. 
When $\upsilon_0$ gives $(\mathrm{B}2)$, two $(-1)$-curves $e_{10}$ and $e_9$ 
as in Theorem~\ref{MT} corresponds to double points of $\upsilon_0(F)$. 
Let $(Z, H)$ denote the image of $(Y, G)$ by contracting only the two. 
From the genus formula, $D.H\geq3$ for any $(-1)$-curve $D$ on $Z$ since $H\sim-3K_Z$. 
Thus it also does so. 
\end{proof}

Furthermore, by comparing all of \#-minimal models of the reductions, 
we develop Lemma~\ref{NT} as follows:

\begin{theorem}\label{PlanePlan}
Let $f:X\to\mathbb{P}^1$ denote a fibred rational surface of genus two. 
Assume that $\rho(X)\leq13$. 
Let $(Y^\#, G^\#)$ be a \#-minimal model of the reduction $(Y, G)$ of $(X, F)$ and 
$d$ the degree of $Y^\#$ as a relatively minimal model $\Sigma_d$. 
Then $d\leq2$, and the following holds. 
\begin{itemize}
\item[$(0)$]
In the cases $(\mathrm{B}1)$ and $(\mathrm{C})$ in {\upshape Theorem~\ref{IntMT}}, 
any $f:X\to\mathbb{P}^1$ admits $d=0$. 
Otherwise, 
there exists $f:X\to\mathbb{P}^1$ not admitting $d=0$. 
\item[$(1)$]
All of $f:X\to\mathbb{P}^1$'s admit $d=1$. 
\item[$(2)$]
For each case in {\upshape Theorem~\ref{IntMT}}, 
there exists $f:X\to\mathbb{P}^1$ not admitting $d=2$. 
\end{itemize}
\end{theorem}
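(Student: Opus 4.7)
The bound $d\leq 2$ is immediate from Lemma~\ref{NT}. For part~(1), starting from a \#-minimal model of the reduction (whose existence is guaranteed by Lemma~\ref{NT} and Lemma~\ref{Barabara}), the plan is to reduce to $d=1$ by an elementary transformation of $\Sigma_d$, in the style of the last paragraph of the proof of Lemma~\ref{NT}. When $d=2$, a direct computation of the bidegree shows that $\Delta_0.G^\# \leq 1$ in each of the four types $(\ref{a2I})$, $(\ref{a2II})$, $(\ref{a4})$, $(\ref{a6})$, so any singular point $p_1$ of $G^\#$ (which has multiplicity $\geq 2$) must lie off $\Delta_0$. The elementary transformation $\tau$ at $p_1$ then carries $(Y^\#, G^\#)$ to a pair on $\Sigma_1$ with the same multiplicity list, and conditions $(\#1)$, $(\#2)$ are immediate for the new bidegree. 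The case $d=0$ is analogous: an elementary transformation at any singular point of $G^\#$ produces a \#-minimal model on $\Sigma_1$.

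For the admissibility half of part~(0) in cases~(B1) and~(C), the plan is to begin with the $\Sigma_1$ model furnished by part~(1) and perform one more elementary transformation to land on $\Sigma_0$. This requires a singular point of $G^\#$ off $\Delta_0$. Since a singular point of multiplicity $m$ lying on $\Delta_0$ contributes at least $m$ to $\Delta_0.G^\# = b-(a+2)$, and this value equals $3$ against $10$ double points in~(B1), or $5$ against $9$ quadruple points in~(C), at most one singular point can possibly lie on $\Delta_0$ in either case. An elementary transformation at any other singular point yields a curve of bidegree $(a+2,a+2)$ on $\Sigma_0$ with the same multiplicity list, satisfying $(\#1)$ for $d=0$, as required.

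For the non-admissibility portion of part~(0) in cases~(A) and~(B2), and for all of part~(2), the plan is to exhibit explicit fibrations. In each of the four cases I would use the extremal example with trivial Mordell-Weil group constructed in the final section of the paper (Figures~\ref{fig:H1V3}--\ref{fig:H3VVV}). Starting from the explicit description of the $(-1)$-curves and the reducible fibres of such an $f$, one checks directly that no birational morphism $\nu:Y\to\Sigma_d$ with the forbidden $d$-value can be \#-minimal: the bidegree and the multiplicity list prescribed by Lemma~\ref{NT} for the relevant type, combined with the actual intersection data on $X$ afforded by the dual-graph pictures and the triviality of the Mordell-Weil group (via Lemma~\ref{MinOfAdjDeg}), lead to a contradiction. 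This excludes $d=2$ in all four cases and $d=0$ in cases~(A) and~(B2). The main obstacle is this last case-by-case verification, which depends on the concrete geometry of the extremal examples and on ruling out any alternative base-point-free pencil of rational curves on $X$ that could support a \#-minimal morphism to $\Sigma_0$ or $\Sigma_2$.
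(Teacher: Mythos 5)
Your handling of $d\leq2$ and of part~(1) is essentially the paper's: the last statement of Lemma~\ref{NT} (recalled in Proposition~\ref{Minimini}) is exactly the elementary-transformation argument you describe, and your bidegree computation showing $\Delta_0.G^\#\leq1$ when $d=2$ is correct. The problems are in parts~(0) and~(2).

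For the positive half of part~(0) your argument has a genuine gap. An elementary transformation can only be performed at an \emph{actual} point of $\Sigma_1$, whereas the singular points $p_i$ in the \#-minimal data include infinitely near ones. The configuration your counting does not exclude is the one where the ten double points (resp.\ nine quadruple points) form a single chain of infinitely near points whose first member lies \emph{on} the minimal section $\Delta_0$ (it is the exceptional curve over the triple, resp.\ quintuple, point): then the unique actual singular point sits on $\Delta_0$, the elementary transformation there lands on $\Sigma_2$ rather than $\Sigma_0$, and there is no other point available. The paper's Lemma~\ref{Lem:BoneCSzero} is devoted almost entirely to this degenerate case: it splits according to whether $p_0,p_1,p_2$ are collinear, derives a contradiction in the collinear case from the Mordell--Weil rank formula of \cite{Shioda99} combined with Horikawa's classification of the fibre types $(\mathrm{II}_1)$, $(\mathrm{IV}_1)$, and in the non-collinear case builds a morphism to $\Sigma_0$ by contracting a different collection of curves ($\ell-e_1-e_2$, $e_2-e_3$, $e_3-e_4$, \dots) rather than by an elementary transformation.

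For part~(2) and the negative half of part~(0), your choice of witnesses is almost certainly wrong. The only obstruction the paper has to a $\Sigma_2$-model in cases (B1) and (C) is Lemma~\ref{Lem:suff}: such a model forces a $(-2)$-section. The trivial Mordell--Weil examples you propose to use \emph{do} have $(-2)$-sections (this is the content of Remark~\ref{Rmk:SINsec}), so this obstruction is unavailable for them, and the paper instead constructs Examples~\ref{Ex:Bonemts} and \ref{Ex:Cmts} specifically so that Lemma~\ref{Lem:mts} excludes every $(-2)$-section. Similarly, in cases (A) and (B2) the paper passes through the associated minimal elliptic surface (Theorems~\ref{Thm:A} and \ref{Thm:Btwo}) and needs \emph{two different} examples: a general elliptic surface (no curve of self-intersection $\leq-2$, hence no map to $\Sigma_2$) for part~(2), and one with a $\mathrm{II}^*$ fibre (unique ruling, minimal models only $\Sigma_1$ or $\Sigma_2$) for part~(0). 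The trivial-MW examples correspond to elliptic surfaces of the second kind, which do map onto $\Sigma_2$; so they cannot serve as witnesses for part~(2), and a single example per case cannot do both jobs. Your appeal to ``the triviality of the Mordell--Weil group via Lemma~\ref{MinOfAdjDeg}'' does not supply the missing argument.
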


In order to show the cases $(\mathrm{B}1)$ and $(\mathrm{C})$ for $d=2$, we consider 
a $(-2)$-{\it section} of $f$, i.e., a $(-2)$-curve 
whose intersection number with $F$ is equal to one. 
As a sufficient condition of the existence of it, we have the following:

\begin{lemma}\label{Lem:suff}
Let $f:X\to\mathbb{P}^1$ be a fibred rational surface which has a birational morphism 
$\upsilon_0:X\to\mathbb{P}^2$ giving 
$(\mathrm{B}1)$ or $(\mathrm{C})$ in {\upshape Theorem~\ref{MT}}. 
If $(X, F)$ admits a \#-minimal model $(\Sigma_2, \nu_*F)$, 
then the pull-back to $X$ of the minimal section $\Delta_0$ of $\Sigma_2$ 
is a $(-2)$-section of $f$. 
\end{lemma}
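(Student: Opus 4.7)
The plan is to verify, via intersection numbers on $\Sigma_2$, that neither the resolution $\nu:Y\to\Sigma_2$ nor the reduction map $\mu:X\to Y$ modifies $\Delta_0$. Once this is in hand, the pull-back of $\Delta_0$ to $X$ coincides with its strict transform and the required properties can be read off directly from $\Sigma_2$.

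First I would compute $\Delta_0.G^\#$ on $\Sigma_2$. Since $G^\#\sim(a+2)\Delta_0+(\breve b+(d+2)(a+2)/2)\Gamma$ with $d=2$, a direct calculation gives $\Delta_0.G^\#=\breve b$. Reading the data from Lemma~\ref{NT}, in both case $(\mathrm{B}1)$ (where $a=2$, $\breve b=1$, cf.\ $(\ref{a2II})$) and case $(\mathrm{C})$ (where $a=6$, $\breve b=1$, cf.\ $(\ref{a6})$), this equals $1$, so $\Delta_0$ meets $G^\#$ transversally at a single point. Since every $m_i\geq 2$, no singular point of $G^\#$ can lie on $\Delta_0$, for otherwise the local intersection number there would already be at least $2$; consequently no infinitely near singular point lies over $\Delta_0$ either. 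Therefore $\nu$ does not touch $\Delta_0$, and its strict transform $\tilde\Delta_0\subset Y$ is a $(-2)$-curve meeting $G$ at exactly one point.

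Next, substituting the same data into $(\ref{Eq:GSqGenS})$ yields $G^2=0$ in both cases, so by $(\ref{poG})$ the pencil $\Lambda_f\subset|G|$ has no base point and the reduction map $\mu:X\to Y$ is an isomorphism. Consequently the pull-back of $\Delta_0$ to $X$ is identified with $\tilde\Delta_0$, a smooth rational curve of self-intersection $-2$ meeting $F$ at one point, i.e., a $(-2)$-section of $f$.

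The only point requiring genuine care is excluding the possibility that a singular point of $G^\#$, infinitely near or otherwise, lies on $\Delta_0$; I expect this to be the main and essentially only obstacle, but it is immediate from the identity $\Delta_0.G^\#=1$ together with $m_i\geq 2$. Everything else is a one-line intersection computation on $\Sigma_2$ or a direct invocation of the formulas already collected in Section~\ref{Sec:RA}.
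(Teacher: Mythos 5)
Your proposal is correct and follows essentially the same route as the paper: identify the numerical type $(\ref{a2II})$ or $(\ref{a6})$ with $d=2$, compute $\Delta_0.\nu_*F=\breve b=1$, and conclude that no singular point of the image curve lies on $\Delta_0$, so its pull-back is a $(-2)$-section. Your additional observation that $G^2=0$ forces the reduction map $\mu:X\to Y$ to be an isomorphism is a nice explicit justification of a step the paper leaves implicit, but it does not change the argument.
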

\begin{proof}
When $\upsilon_0$ gives $(\mathrm{B}1)$ and $(\mathrm{C})$ respectively, 
$(\Sigma_2, \nu_*F)$ satisfies $(\ref{a2II})$ and $(\ref{a6})$ from 
Lemmas~\ref{NT}, \ref{Lem:a8} and \ref{Barabara}. 
Since $\Delta_0.\nu_*F=1$, any singular point of $\nu_*F$ is not on $\Delta_0$. 
Therefore, the pull-back to $X$ of $\Delta_0$ is a $(-2)$-section of $f$. 
\end{proof}

A necessary condition of the existence of a $(-2)$-section of $f$ is as follows:

\begin{lemma}\label{Lem:mts}
Let $f:X\to\mathbb{P}^1$ be a fibred rational surface of genus two 
and $D$ a $(-2)$-section of $f$. 
Then the image of $D$ by $f\times\Phi_{|K_X+F-Z|}:X\dashrightarrow W^\natural$ as before 
{\upshape Lemma~\ref{Lem:a8}} is a section of $W^\natural$ which is linearly equivalent to 
$\varDelta_0+\varGamma$ or $\varDelta_0$ according as $D$ meets the fixed locus of $|K_X+F|$ or not. 
Furthermore, for each case, among the $2(K_X+F)^2$ points which correspond to $(-1)$-curves 
contracted by $\sigma^\natural:W^\flat\to W^\natural$, 
the number of points lying on the image of $D$ is the following$:$

When $(K_X+F)^2=1$, it passes through one of the two points in the case where 
it is linearly equivalent to $\varDelta_0+\varGamma$, and the latter case does not occur. 

When $(K_X+F)^2=2$, it passes through two of the four points in the former case, 
and it does not pass through any point of them in the latter case. 

When $(K_X+F)^2=3$, it is three and one respectively, in the former and latter case. 
\end{lemma}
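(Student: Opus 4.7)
The plan is to lift $D$ to its strict transform $\widetilde{D}$ on $\widetilde{X}$, push forward to $W^\natural$ by $\sigma\circ\widetilde{\pi}$, and compare with the basis $\{\varDelta_0,\varGamma\}$ of $\mathrm{NS}(W^\natural)$ using the two pullback formulas displayed before Lemma~\ref{Lem:a8}.

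Since $D$ is a smooth rational curve with $D^2=-2$ and $D.F=1$, adjunction gives $K_X.D=0$ and $(K_X+F).D=1$. The factorization $f=p_1\circ(f\times\Phi_{|K_X+F-Z|})$ together with $f|_D$ being an isomorphism shows that $(\sigma\circ\widetilde{\pi})|_{\widetilde{D}}$ has degree one onto its image, so $\mathsf{D}:=(\sigma\circ\widetilde{\pi})_*\widetilde{D}$ is reduced and irreducible. Applying the projection formula together with $(\sigma\circ\widetilde{\pi})^*\varGamma\sim\varphi^*F$ and $(\sigma\circ\widetilde{\pi})^*\varDelta_0\sim\varphi^*(K_X+F-Z)-\mathcal{E}$ yields
\[
\mathsf{D}.\varGamma=D.F=1\quad\text{and}\quad\mathsf{D}.\varDelta_0=D.(K_X+F-Z)-\widetilde{D}.\mathcal{E}=1-D.Z-\widetilde{D}.\mathcal{E}.
\]
Hence $\mathsf{D}$ is a section of the $p_1$-ruling, of class $\varDelta_0+b\varGamma$ with $b=1-D.Z-\widetilde{D}.\mathcal{E}$. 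Base-point-freeness of $|(\sigma\circ\widetilde{\pi})^*\varDelta_0|$ forces $b\geq 0$, so $b\in\{0,1\}$. The dichotomy then records whether $D$ meets the fixed locus of $|K_X+F|$, taken as the union of the fixed divisor $Z$ and the base points $\varphi(\mathcal{E})$ of the moving part $|K_X+F-Z|$.

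For the count of points of $\sigma^\natural:W^\flat\to W^\natural$ lying on $\mathsf{D}$, I would compute $(\sigma^\flat\circ\widetilde{\pi})_*\widetilde{D}.(-K_{W^\flat})$ in two ways. Pulling back via $(\sigma^\flat\circ\widetilde{\pi})^*(-K_{W^\flat})\sim\varphi^*(2K_X-K_X^2 F)$ and applying the projection formula gives $2K_X.D-K_X^2(D.F)=-K_X^2$. On the other hand, letting $I$ index the centers of $\sigma^\natural$ lying on (proper transforms of) $\mathsf{D}$, and using $-K_{W^\flat}\sim(\sigma^\natural)^*(-K_{W^\natural})-\sum_i E_i$ together with $(\sigma^\natural)^*\mathsf{D}.(-K_{W^\flat})=\mathsf{D}.(-K_{W^\natural})=(\varDelta_0+b\varGamma).(2\varDelta_0+2\varGamma)=2+2b$, I get $-K_X^2=2+2b-|I|$. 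Since $K_X^2=(K_X+F)^2-4$ (from $K_X.F=2$, $F^2=0$), this yields $|I|=(K_X+F)^2+2b-2$, which reproduces all the counts listed in the lemma and rules out $b=0$ when $(K_X+F)^2=1$ (because then $|I|=-1<0$).

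The main obstacle will be justifying carefully the strict-transform bookkeeping when the $2(K_X+F)^2$ centers of $\sigma^\natural$ include infinitely near points, so that each center on $\mathsf{D}$ contributes with multiplicity exactly one. This reduces to checking that $\mathsf{D}$, already known to be a section (hence smooth and irreducible), meets each successive center transversally; the adjunction-based shortcut $\widetilde{\mathsf{D}}.(-K_{W^\flat})=\widetilde{\mathsf{D}}^2+2=(\mathsf{D}^2-|I|)+2=2b+2-|I|$ yields the same equation with less delicate bookkeeping and is the computation I would actually write out.
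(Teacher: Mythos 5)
Your argument is correct and is essentially the paper's own proof: the paper likewise computes $(\sigma\circ\widetilde{\pi})_*\widehat{D}.\varGamma=D.F=1$ and $(\sigma\circ\widetilde{\pi})_*\widehat{D}.\varDelta_0=1-D.Z-\widehat{D}.\mathcal{E}\le 1$ from the two pullback formulas, then derives all the point counts from $(\sigma^\flat\circ\widetilde{\pi})_*\widehat{D}.(-K_{W^\flat})=D.(2K_X-K_X^2F)=-K_X^2$, leaving implicit exactly the bookkeeping $|I|=2+2b+K_X^2=(K_X+F)^2+2b-2$ that you write out. One small observation: your formula $b=1-D.Z-\widehat{D}.\mathcal{E}$ assigns $\varDelta_0$ to the case where $D$ meets the fixed locus and $\varDelta_0+\varGamma$ to the case where it does not, which is the reverse of the order in which the lemma's statement pairs the two classes with ``meets / does not meet''; your computation agrees with the paper's proof, and since the counts are keyed to the linear equivalence class rather than to that phrase, nothing downstream is affected.
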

\begin{proof}
Let $D$ be a $(-2)$-section of $f$. 
The genus formula implies $K_X.D=0$. 
We use the notation as before Lemma~\ref{Lem:a8}. 
Remark that the strict transform $\widehat{D}$ to $\widetilde{X}$ of $D$ is 
also smooth and irreducible. 
In the way similar to the proof of Lemma~\ref{Lem:a8}, we have that 
\begin{align*}
(\sigma\circ\widetilde{\pi})_*\widehat{D}.\varDelta_0 
=D.(K_X+F)-D.Z-\widehat{D}.\mathcal{E}\leq D.(K_X+F)=1
\end{align*}
and $(\sigma\circ\widetilde{\pi})_*\widehat{D}.\varGamma =D.F=1$. 
Thus $(\sigma\circ\widetilde{\pi})_*\widehat{D}\sim\varDelta_0$ or $\varDelta_0+\varGamma$. 
Furthermore, 
\begin{align*}
(\sigma^\flat\circ\widetilde{\pi})_*\widehat{D}.(-K_{W^\flat})=D.(2K_X-({K_X}^2)F)=-{K_X}^2
\end{align*}
implies the statements for the $2(K_X+F)^2$ points 
which correspond to the contracted $(-1)$-curves. 
\end{proof}

Similarly, we have the following:

\begin{lemma}\label{BoneBtwo}
Let $f:X\to\mathbb{P}^1$ be a fibred rational surface of genus two. 
Assume that $(K_X+F)^2=2$. 
Let $f\times\Phi_{|K_X+F-Z|}:X\dashrightarrow W^\natural$ denote the double cover 
branched along $B^\natural$ as before {\upshape Lemma~\ref{Lem:a8}}. 
Then, there exists a birational morphism $\upsilon_0:X\to\mathbb{P}^2$ such that 
$\upsilon_0(F)$ is as in $(\mathrm{B}2)$ in {\upshape Theorem~\ref{IntMT}} if and only if 
$B^\natural$ include exactly one minimal section of $W^\natural$, 
which induces the unique $(-1)$-section of $f$. 
\end{lemma}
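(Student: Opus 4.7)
The plan is to transport a $(-1)$-section of $f$ to a component of $B^\natural$ and back, and then to invoke Lemma~\ref{Barabara} to pin down the case among $(\mathrm{B}1)$ and $(\mathrm{B}2)$.

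First I would handle the forward direction. Assume $\upsilon_0:X\to\mathbb{P}^2$ gives case $(\mathrm{B}2)$; by Lemma~\ref{Barabara}, $f$ has a $(-1)$-section $E$, realised by the class $e_{11}$ of Theorem~\ref{MT}. From $E^2=-1$ and $F.E=1$, adjunction yields $K_X.E=-1$ and hence $(K_X+F).E=0$. Since $Z$ is vertical while $E$ is horizontal, $Z.E\geq 0$; on the other hand a general member of the movable system $|K_X+F-Z|$ cannot contain $E$, so $(K_X+F-Z).E\geq 0$. These two inequalities force $Z.E=(K_X+F-Z).E=0$, so $\Phi_{|K_X+F-Z|}$ contracts $E$. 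Therefore $(f\times\Phi_{|K_X+F-Z|})(E)$ is a fibre of the second ruling of $W^\natural$, i.e.\ a minimal section $S\sim\varDelta_0$. Because $f|_E$ is an isomorphism while $X\dashrightarrow W^\natural$ is of degree two, the map $E\to S$ has degree one; consequently $E$ must lie generically in the ramification locus of $\pi^\natural:X^\natural\to W^\natural$, which is equivalent to $S\subset B^\natural$.

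For uniqueness I would write a prospective $(-1)$-section in the basis of Theorem~\ref{MT}$(\mathrm{B}2)$ as $E=a\ell+\sum c_i e_i$; the three numerical conditions $E^2=-1$, $K_X.E=-1$, $F.E=1$ become $a^2-\sum c_i^2=-1$, $3a+\sum c_i=1$, and $9a+3\sum_{i\le 8}c_i+2(c_9+c_{10})+c_{11}=1$, and a short case distinction on $a\ge 0$ eliminates every solution except $E=e_{11}$. Uniqueness of the $\varDelta_0$-component of $B^\natural$ then follows from the converse step below, since a second such component would yield a second $(-1)$-section of $f$.

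For the converse, suppose $S\subset B^\natural$ with $S\sim\varDelta_0$. Its strict transform $\widetilde{S}\subset\widetilde{W}$ is a component of the branch divisor of the canonical-resolution cover, so $\widetilde{E}:=\widetilde{\pi}^{-1}(\widetilde{S})_{\mathrm{red}}$ is a smooth rational curve with $\widetilde{\pi}^*\widetilde{S}=2\widetilde{E}$ and $\widetilde{E}.(\sigma\circ\widetilde{\pi})^*\varGamma=\widetilde{S}.\varGamma=1$. Setting $E:=\varphi_*\widetilde{E}$, this is an irreducible curve in $X$ with $F.E=1$. Arguing as in Lemma~\ref{Lem:mts}, I would count the contributions to $E^2$ from the exceptional $(-1)$-curves of $\varphi$ that meet $\widetilde{E}$ and from the blow-ups of $\sigma^\natural$ centred on $S$, thereby showing $E^2=-1$. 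Thus $E$ is a $(-1)$-section of $f$, and since $(K_X+F)^2=2$, Lemma~\ref{Barabara} forces $\upsilon_0$ to realise case $(\mathrm{B}2)$ rather than $(\mathrm{B}1)$.

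The main obstacle I expect is the intersection bookkeeping in the converse: among the $2(K_X+F)^2=4$ points blown up by $\sigma^\natural$, the correct number must lie on $S$, and the $(K_X+F)^2=2$ exceptional curves contracted by $\varphi$ must meet $\widetilde{E}$ in just the right way to yield $E^2=-1$ instead of $0$ or $-2$. The appropriate $\varDelta_0$-analogue of Lemma~\ref{Lem:mts} governs these counts; its derivation parallels that of Lemma~\ref{Lem:mts} but uses the identity $(K_X+F).E=0$ established in the forward direction in place of the $(-2)$-section identity $K_X.D=0$.
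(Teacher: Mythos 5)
The forward direction has a genuine gap at its key step. From $(K_X+F-Z).E=0$ you correctly conclude that the image $S$ of the $(-1)$-section $E$ is a minimal section of $W^\natural$ and that $E\to S$ has degree one. But ``degree one onto its image'' does not imply ``contained in the ramification locus'': for a double cover, the preimage of a curve not contained in the branch divisor can perfectly well \emph{split} into two components, each mapping isomorphically onto it. This is exactly what happens for the $(-2)$-sections treated in Lemma~\ref{Lem:mts}, whose images are sections linearly equivalent to $\varDelta_0$ or $\varDelta_0+\varGamma$ that are \emph{not} components of $B^\natural$; your inference would therefore ``prove'' too much. What the paper actually does is first pin down, via $(\sigma^\flat\circ\widetilde{\pi})_*\widehat{e_{11}}.(-K_{W^\flat})=e_{11}.(2K_X-({K_X}^2)F)=0$, that the image $\varDelta_\infty$ passes through two of the four centres of $\sigma^\natural$, and then uses the classification of the singularities of $B^\natural$ together with $(\ref{LECofB})$ and $(\ref{THindex})$ to exclude the split alternative, i.e.\ to force $\varDelta_\infty\subset B^\natural$. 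Some argument of this kind is indispensable and is missing from your proposal.

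Two further points. Uniqueness of the minimal section inside $B^\natural$ is obtained in the paper directly from the rationality constraint $(\ref{eq:Rationality})$ (a second minimal section in the branch divisor would violate $h^0(W^\flat,-K_{W^\flat}+{K_X}^2(\sigma^\natural)^*\varGamma)=0$); your route through a lattice computation in $\mathrm{NS}(X)$ only counts numerical classes, not irreducible $(-1)$-sections, and in any case rests on the flawed forward step. In the converse direction your outline has the right shape, but the ``intersection bookkeeping'' you flag as the main obstacle is precisely the content of the proof: one must show from $(\ref{LECofB})$ and $(\ref{THindex})$ that the singularities of $B^\natural$ on $\varDelta_\infty$ other than type $(0)$ are two of type $(\mathrm{I}_1)$ or one of type $(\mathrm{II}_1)$, so that the strict transform of $\varDelta_\infty$ is an isolated $(-2)$-component of $B^\flat$ whose preimage under the double cover is a $(-1)$-curve. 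Note also that at that stage there is as yet no curve $E$ on $X$, so the identity $(K_X+F).E=0$ cannot serve as input: the computation must live entirely on the branch-divisor side. The final appeal to Lemma~\ref{Barabara} to land in case $(\mathrm{B}2)$ rather than $(\mathrm{B}1)$ is fine.
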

\begin{proof}
Let $f:X\to\mathbb{P}^1$ be a fibred rational surface of genus two. 
Assume that $(K_X+F)^2=2$. 
Then the branch divisor $B^\natural$ includes at most one minimal section of $W^\natural$ 
from $(\ref{eq:Rationality})$.

Firstly, we assume that there exists $\upsilon_0:X\to\mathbb{P}^2$ giving $(\mathrm{B}2)$. 
Remark that the strict transform $\widehat{e_{11}}$ to $\widetilde{X}$ of 
the unique $(-1)$-section $e_{11}$ of $f$ is smooth and irreducible. 
By the quite same argument in Lemma~\ref{Lem:mts}, 
$(\sigma\circ\widetilde{\pi})_*\widehat{e_{11}}$ is a minimal section $\varDelta_\infty$ of $W^\natural$, 
and $\varDelta_\infty$ passes through two of the four points which correspond to $(-1)$-curves 
contracted by $\sigma^\natural$. 
Consider singularities of $B^\natural$ on $\varDelta_\infty$. 
Then $B^\natural$ must include $\varDelta_\infty$ from $(\ref{LECofB})$ and $(\ref{THindex})$. 

Next, we return the first situation and assume that $B^\natural$ includes a minimal section 
$\varDelta_\infty$ of $W^\natural$. 
We only have to show that the strict transform of $\varDelta_\infty$ by $\sigma\circ\widetilde{\pi}$ 
is the pull-back to $\widetilde{X}$ of the double $(-1)$-section of $f$. 
Consider singularities of $B^\natural$ on $\varDelta_\infty$. 
Then, from $(\ref{LECofB})$ and $(\ref{THindex})$, any singularities of types $(\mathrm{III}_k)$, 
$(\mathrm{IV}_k)$ and $(\mathrm{V})$ does not occur, and singularities of $B^\natural$ except 
of type $(0)$ are two of types $(\mathrm{I}_1)$ or one of type $(\mathrm{II}_1)$. 
Furthermore, $\varDelta_\infty$ and other components of $B^\natural$ except fibres of $W^\natural$ 
form two simple triple points or one two-fold triple point of $B^\natural$ except the fibres 
according to above types of the singularities. 
In particular, the strict transform of $\varDelta_\infty$ by $\sigma^\natural:W^\flat\to W^\natural$ 
is an isolated component whose self-intersection number is $(-2)$ of $B^\flat$. 
Thus a standard calculation for a double cover leads the desired statement. 
\end{proof}

A genus two fibration on a rational surface is obtained from 
a finite double cover of $W^\natural=\mathbb{P}^1\times\mathbb{P}^1$. 
In particular, a branch divisor $B^\natural$ on $W^\natural$ with $(\ref{LECofB})$, 
$(\ref{THindex})$ and $(\ref{eq:Rationality})$ uniquely determines 
a fibred rational surface $f:X\to\mathbb{P}^1$ of genus two 
in the way as before Lemma~\ref{Lem:a8}. 
We also use the notation as before Lemma~\ref{Lem:a8}. 
Let $\varGamma_p$ be the fibre of $p\in\mathbb{P}^1$ 
by the first projection $pr_1:W^\natural\to\mathbb{P}^1$ and 
$\varDelta_q$ the fibre of $q\in\mathbb{P}^1$ by the second projection $pr_2$. 
We take $(t,x)$ as an affine coordinate on $W^\natural\setminus(\varDelta_\infty\cup\varGamma_\infty)$.

\begin{example}\label{Ex:Bonemts}
Let $A$ be Zariski closure on $W^\natural$ of the divisor defined by 
\begin{align*}
x^4-2x^2t-2x^3t+x^4t+t^2+2xt^2-x^2t^2+t^3=0, 
\end{align*}
which is irreducible. 
The singularities of $A$ are on $(0,0)$ and on $(\infty,\infty)$. 
Take the section $C$ of $pr_2$ defined by $x^2-t-xt=0$. 
The four-fold double point $(0,0)$ of $A$ has a contact of order eight with $C$. 
Put $B^\natural=A+C$, which is linearly equivalent to $6\varDelta_0+4\varGamma$. 
In fact, the singularities of $B^\natural$ are 
of type $(\mathrm{IV}_1)$ on $\varGamma_0$ and of type $(0)$ on $\varGamma_\infty$. 
We take the finite double cover of $W^\natural$ branched along $B^\natural$ 
and the canonical resolution. 
Then we also see $(\ref{eq:Rationality})$ 
since $B^\natural$ does not include any minimal section of $pr_1$. 
Furthermore, from Lemma~\ref{BoneBtwo}, we have a fibred rational surface $f:X\to\mathbb{P}^1$ 
which have a birational morphism $\upsilon_0:X\to\mathbb{P}^2$ giving $(\mathrm{B}1)$.

Any section of $pr_1$ which is linearly equivalent to $\varDelta_0+\varGamma$ 
passes through at most one of the four points 
which correspond to the $(-1)$-curves contracted by $\sigma^\natural:W^\flat \to W^\natural$. 
Since minimal sections of $pr_1$ not passing through any of the four points 
must meet $C$ at a smooth point of $B^\natural$, 
the strict transforms to $\widetilde{X}$ are irreducible. 
In particular, any of them is not a section of $f$. 
Since $\varDelta_\infty$ meets $A$ at the smooth point $(-1, \infty)$ of $B^\natural$, 
the strict transform to $\widetilde{X}$ is also irreducible. 
Therefore, $f$ has no $(-2)$-section from Lemma~\ref{Lem:mts}. 
\end{example}

\begin{example}\label{Ex:Cmts}
Let $A$ be Zariski closure on $W^\natural$ of the divisor defined by 
\begin{align*}
x^4+2x^2t+4x^3t-4x^4t+t^2+4xt^2-4x^2t^2-4t^3=0, 
\end{align*}
which is irreducible. 
The singularities of $A$ are on $(0,0)$ and on $(\infty,\infty)$. 
$A$ is tangent to $\varGamma_1$ at $(1, 1)$, and passes through $(1/4, 0)$ and $(1/4, \infty)$. 
The bisection $C$ of $pr_1$ or of $pr_2$ defined by 
$x^2+t+2xt-4t^2=0$ has a double point at $(\infty,\infty)$. 
Furthermore, $C$ passes through $(1/4, 0)$ and $(1, 1)$. 
In addition, the four-fold double point $(0,0)$ of $A$ has 
a contact of order eight with $C$. 
Put $B^\natural=A+C+\varGamma_\infty$, 
which is linearly equivalent to $6\varDelta_0+6\varGamma$. 
In fact, the singularities of $B^\natural$ are of type $(\mathrm{IV_1})$ on $\varGamma_0$, 
of type $(\mathrm{V})$  on $\varGamma_\infty$, 
of type $(0)$ on $\varGamma_{1/4}$ and of type $(0)$ on $\varGamma_1$. 
If a section of $pr_2$ which is linearly equivalent to $2\varDelta_0+\varGamma$ 
passes through five points $(0,0)$, $(\infty,\infty)$ and the three 
infinitely near points of $(0,0)$ corresponding to double points of $A$, 
then it is unique and is defined by $x^2+2xt+t=0$. 
The section of $pr_2$ meets $\varGamma_\infty$ transversally. 
Therefore, $(\ref{eq:Rationality})$ also holds when 
we take the finite double cover of $W^\natural$ branched along $B^\natural$ 
and the canonical resolution. 
Hence, we obtain a fibred rational surface $f:X\to\mathbb{P}^1$ of genus two with $(K_X+F)^2=3$.

Any section of $pr_1$ which is linearly equivalent to $\varDelta_0+\varGamma$ 
passes through at most two of the six points 
which correspond to the $(-1)$-curves contracted by $\sigma^\natural:W^\flat \to W^\natural$. 
$\varDelta_0$ meets $B^\natural$ at a smooth point $(\infty, 0)$ of $B^\natural$ transversally. 
$\varDelta_\infty$ also meets $B^\natural$ at a smooth point $(1/4, \infty)$ transversally. 
Therefore, the strict transforms to $\widetilde{X}$ are irreducible. 
In particular, they meet $\varphi^*F$ at two points. 
Thus, $f$ has no $(-2)$-section from Lemma~\ref{Lem:mts}. 
\end{example}

From Lemma~\ref{Lem:suff}, Examples~\ref{Ex:Bonemts} and \ref{Ex:Cmts}, 
we have the following:

\begin{lemma}\label{Lem:BoneCStwo}
In each of the cases $(\mathrm{B}1)$ and $(\mathrm{C})$ in {\upshape Theorem~\ref{IntMT}}, 
there exists a fibred rational surface $f:X\to\mathbb{P}^1$ not admitting $\Sigma_2$ 
as the surface of a \#-minimal model of $(X, F)$. 
\end{lemma}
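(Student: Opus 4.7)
The plan is to combine the contrapositive of Lemma~\ref{Lem:suff} with the explicit constructions furnished by Examples~\ref{Ex:Bonemts} and~\ref{Ex:Cmts}. Indeed, Lemma~\ref{Lem:suff} asserts that whenever a fibred rational surface $f:X\to\mathbb{P}^1$ in case $(\mathrm{B}1)$ or $(\mathrm{C})$ admits $(\Sigma_2,\nu_*F)$ as a \#-minimal model of $(X,F)$, the pull-back of the minimal section $\Delta_0$ of $\Sigma_2$ is automatically a $(-2)$-section of $f$. Contrapositively, if some $f$ in one of these two cases has no $(-2)$-section at all, then $(X,F)$ cannot admit any \#-minimal model whose underlying surface is $\Sigma_2$.

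Hence it suffices to exhibit, in each of the cases $(\mathrm{B}1)$ and $(\mathrm{C})$, one fibred rational surface of genus two without any $(-2)$-section. This is exactly what the two examples already provide: Example~\ref{Ex:Bonemts} produces (via a suitably constructed branch divisor $B^\natural$ on $W^\natural$ satisfying $(\ref{LECofB})$, $(\ref{THindex})$ and $(\ref{eq:Rationality})$, together with Lemma~\ref{BoneBtwo}) a fibred rational surface $f$ with a birational morphism $\upsilon_0:X\to\mathbb{P}^2$ giving $(\mathrm{B}1)$, and Example~\ref{Ex:Cmts} produces one giving $(\mathrm{C})$. In both examples, the verification that $f$ has no $(-2)$-section is carried out by inspecting, in view of Lemma~\ref{Lem:mts}, the only candidates for images on $W^\natural$ of such sections (namely curves linearly equivalent to $\varDelta_0$ or $\varDelta_0+\varGamma$ satisfying the incidence conditions with the $2(K_X+F)^2$ blown-up points): each such candidate is shown to have an irreducible strict transform to $\widetilde{X}$ meeting $\varphi^*F$ at more than one point, contradicting Lemma~\ref{Lem:mts}.

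Thus the proof reduces to quoting Lemma~\ref{Lem:suff} in its contrapositive form and invoking Examples~\ref{Ex:Bonemts} and~\ref{Ex:Cmts}. The main conceptual step, which is already accomplished in these examples, is the combinatorial check ruling out $(-2)$-sections via Lemma~\ref{Lem:mts}: one must enumerate the possible linear equivalence classes of images of a hypothetical $(-2)$-section on $W^\natural$ and verify the tangency/incidence conditions case by case. Once that is done, no further argument is required, since the role of $\Sigma_2$ in a \#-minimal model is tied to the existence of a $(-2)$-section by Lemma~\ref{Lem:suff}.
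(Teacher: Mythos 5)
Your proposal is correct and follows exactly the paper's own route: the paper derives Lemma~\ref{Lem:BoneCStwo} precisely by combining the contrapositive of Lemma~\ref{Lem:suff} with the surfaces constructed in Examples~\ref{Ex:Bonemts} and~\ref{Ex:Cmts}, where the absence of a $(-2)$-section is checked via Lemma~\ref{Lem:mts} just as you describe. No differences worth noting.
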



\begin{lemma}\label{Lem:BoneCSzero}
Let $f:X\to\mathbb{P}^1$ be a fibred rational surface which has a birational morphism 
$\upsilon_0:X\to\mathbb{P}^2$ giving 
$(\mathrm{B}1)$ or $(\mathrm{C})$ in {\upshape Theorem~\ref{MT}}. 
Then $\Sigma_0$ appears as the surface of a \#-minimal model of $(X, F)$. 
\end{lemma}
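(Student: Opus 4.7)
The plan is to take any \#-minimal model $(\Sigma_d, G^\#)$ of the reduction of $(X, F)$ provided by Lemmas~\ref{NT} and \ref{Barabara} and, if $d \geq 1$, to reduce $d$ step by step via elementary transformations exactly as in the closing paragraph of the proof of Lemma~\ref{NT}. The essential point is that, in both cases $(\mathrm{B}1)$ and $(\mathrm{C})$, the multiplicity of the new proper singular point produced by such a transformation, namely $(a+2)-m_i$, coincides with $m_i$ ($=2$ or $4$ respectively), so the intrinsic data $(a, \breve{b}, N, m_1, \ldots, m_N)$ are preserved, and the conditions $(\#1)$ and $(\#2)$ for the new $d$ reduce to arithmetic inequalities already verified in the proof of Lemma~\ref{NT}.

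To perform an elementary transformation I need a proper singular point of $G^\#$ off $\Delta_0$. The intersection count $\Delta_0 \cdot G^\# = b - (a+2)d$ equals $5-2d$ in case $(\mathrm{B}1)$ and $9-4d$ in case $(\mathrm{C})$, and each proper singular point on $\Delta_0$ contributes at least its multiplicity ($2$ or $4$) to this intersection. Hence at most one proper singular point sits on $\Delta_0$ when $d=1$, and none when $d=2$; as soon as $G^\#$ carries two or more proper singular points one lies off $\Delta_0$, and iterating the procedure reaches $\Sigma_0$.

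The principal obstacle is the degenerate configuration in which $d=1$ and $G^\#$ has a single proper singular point $p_1 \in \Delta_0$ bearing the entire infinitely near chain of the remaining $N-1$ singular points. I would handle this case by performing the elementary transformation directly at $p_1$: a local analysis using the bound $(G^\# \cdot \Delta_0)_{p_1} \leq \Delta_0 \cdot G^\#$ together with the irreducibility of $G^\#$ shows that the infinitely near chain lies on the exceptional divisor $E$ away from the point $\widetilde{\Delta_0} \cap E$, so that the resulting \#-minimal model on $\Sigma_2$ acquires two distinct proper singular points, both off the new $\Delta_0$ (the first link of the chain becomes proper on the image fibre, and the contracted fibre yields an isolated double point). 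Two further elementary transformations at proper singular points off the current minimal section then descend through $\Sigma_1$ to $\Sigma_0$.
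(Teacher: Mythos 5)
Your first two paragraphs are fine and agree in spirit with the easy half of the paper's argument: when $G^\#$ on $\Sigma_1$ has a proper singular point off $\Delta_0$, one elementary transformation lands on $\Sigma_0$, the multiplicities $(a+2)-m_i=m_i$ are preserved, and the numerical conditions $(\#1)$, $(\#2)$ are easily checked. The gap is in your third paragraph. The bound $(G^\#\cdot\Delta_0)_{p_1}\leq\Delta_0\cdot G^\#$ only rules out the second point $p_2$ of the infinitely near chain lying at $\widetilde{\Delta_0}\cap E$; it says nothing about $p_2$ lying at $\widetilde{\Gamma_1}\cap E$, where $\Gamma_1$ is the fibre through $p_1$. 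Since $\Gamma_1\cdot G^\#=a+2=2m_1$, the inequality $(G^\#\cdot\Gamma_1)_{p_1}\geq m_{p_1}+m_{p_2}$ permits $p_2\in\widetilde{\Gamma_1}$. In that sub-case the elementary transformation at $p_1$ contracts $\widetilde{\Gamma_1}$ onto a point $q$ which absorbs $p_2$ and the whole remaining chain: the model on $\Sigma_2$ has a \emph{single} proper singular point $q$, and the only elementary transformation available at a singular point (the one at $q$) is the inverse of the one you just performed. Your procedure then cycles between $\Sigma_1$ and $\Sigma_2$ and never reaches $\Sigma_0$. In the plane model this sub-case is exactly the configuration where $p_0$, $p_1$, $p_2$ are colinear, and it cannot be dismissed by local intersection counts.

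The paper has to exclude precisely this configuration by a global argument: if $p_0,p_1,p_2$ are colinear then the vertical $(-2)$-curves $(e_i-e_{i+1})$, $i=2,\ldots,10$, together with $(\ell-e_1-e_2-e_3)$ all lie in one reducible fibre, forcing at least eleven components; the Mordell--Weil rank formula of Shioda shows the reducible fibre is unique with exactly eleven components; and Horikawa's classification shows a fibre of type $(\mathrm{II}_1)$ or $(\mathrm{IV}_1)$ whose dual graph contains $D_{10}$ must have more than eleven components --- a contradiction. Nothing in your proposal plays the role of this step, so the degenerate case is not actually handled. (Two smaller points: in the non-colinear case the paper does not iterate elementary transformations but contracts $(\ell-e_1-e_2)$, $(e_2-e_3)$, $(e_3-e_4)$ directly to exhibit the $\Sigma_0$ model; and even within your approach the two further descents must be ordered carefully, since after the first one the residual double point lands on the new minimal section.)
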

\begin{proof}
Let $f:X\to\mathbb{P}^1$ be a fibred rational surface of genus two. 
We may assume that there exists a birational morphism $\upsilon_0:X\to\mathbb{P}^2$ 
giving $(\mathrm{B}1)$ in {\upshape Theorem~\ref{MT}}, 
since the other case is similar and simpler. 
Let $p_{i-1}$ be the point which corresponds to $e_i$ 
as in {\upshape Theorem~\ref{MT}} by contracting it. 
For simplicity, we assume that $i<j$ if $p_j$ is an infinitely near point of $p_i$. 
Furthermore, we can take the composite $\upsilon_{i}:X\to X_i$ of the blow-downs 
which contracts $e_{11}, e_{10}, \ldots, e_{i+1}$ with $\upsilon_i(e_{i+1})=p_i$. 
If $\upsilon_1(e_i)$ is not on the minimal section $(\upsilon_1)_*e_1$ of $X_1$ for some $i\geq2$, 
then, in a way similar to the proof of the last statement in Lemma~\ref{NT}, 
$\Sigma_0$ appears as the surface of a \#-minimal model of $(X, F)$ 
by performing the elemental transformation of $X_1$ at $\upsilon_1(e_i)$. 
Assume that $p_{i+1}$ is an infinitely near point of $p_i$ for all $i$. 
Then $(e_1-e_2)$ is a $(-2)$-section of $f$, 
and $(e_i-e_{i+1})$, $i=2, 3, \ldots, 10$ are $(-2)$-curves contained in a fibre 
since they are connected.

At first, we suppose that $p_0$, $p_1$ and $p_2$ are colinear. 
Then the reducible fibre also contains the $(-2)$-curve $(\ell-e_1-e_2-e_3)$, 
since $(\ell-e_1-e_2-e_3).(e_3-e_4)=1$. 
Furthermore, only the irreducible components $(\ell-e_1-e_2-e_3)$ and $(e_i-e_{i+1})$, 
$i=2, 3, \ldots, 10$ do not generate $F$ by considering the class in 
$\mathrm{NS}(X)=\mathbb{Z}\ell\oplus(\bigoplus_{i=1}^{11}\mathbb{Z}e_i)$. 
In particular, the number of irreducible components of the reducible fibre is at least eleven. 
Now, for Jacobian surface of the generic fibre of $f$, we regard $(e_1-e_2)$ as an origin, 
and compute the rank of the Mordell-Weil group from the formula \cite[(3)~in~Theorem~3]{Shioda99}. 
In fact, a reducible fibre of $f$ is unique and 
the number of the irreducible components is exactly eleven. 
On the other hand, the unique reducible fibre is of type $(\mathrm{II}_1)$ or $(\mathrm{IV}_1)$ 
in the sense of Horikawa \cite{Hori77} from $(\ref{THindex})$. 
However, any singular fibre of type $(\mathrm{II}_1)$ or $(\mathrm{IV}_1)$ 
whose dual graph contains Dynkin diagram of the root lattice $D_{10}$ of rank ten
has more than eleven irreducible components, which is a contradiction.

Next, we assume that $p_0$, $p_1$ and $p_2$ are not colinear. 
We consider $(e_2-e_3)$, $(e_3-e_4)$ and a $(-1)$-bisection $(\ell-e_1-e_2)$ of $f$. 
Remark that they are disjoint with $(e_5-e_6), (e_6-e_7), \ldots, (e_{10}-e_{11})$ and $e_{11}$. 
Let $\upsilon_3':X\to X_3'$ be the composite of $\upsilon_{4}:X\to X_{4}$ and 
the blow-down $X_{4}\to X_{3}'$ contracting $(\upsilon_{4})_*(\ell-e_1-e_2)$. 
Then $(\upsilon_{3}')_*(e_2-e_3).(\upsilon_{3}')_*F=2$ and 
$(\upsilon_{3}')_*(e_2-e_3)$ is a $(-1)$-curve. 
We take the composite $\upsilon_{2}':X\to X_{2}'$ of $\upsilon_{3}'$ and 
the blow-down $X_{3}'\to X_2'$ contracting $(\upsilon_{3}')_*(e_2-e_3)$. 
Similarly, we can define a birational morphism $\upsilon_1':X\to X_1'$ 
as the composite of $\upsilon_{2}'$ and the blow-down $X_{2}'\to X_1'$ 
contracting $(\upsilon_{2}')_*(e_3-e_4)$. 
We regard $X_1'$ as $\Sigma_0$ so that $(\upsilon_1')_*(e_1-e_2)$ is a minimal section 
of $X_1'$ and $(\upsilon_1')_*(e_4-e_5)$ is a fibre. 
Then $(X_1', (\upsilon_{1}')_*F)$ is a \#-minimal model of $(X, F)$. 
\end{proof}

The statements in Theorem~\ref{PlanePlan} for $(\mathrm{B}1)$ and $(\mathrm{C})$ 
follow from Lemmas~\ref{Lem:BoneCStwo} and \ref{Lem:BoneCSzero}. 
When $\upsilon_0:X\to\mathbb{P}^2$ gives $(\mathrm{A})$ and $(\mathrm{B}2)$ respectively, 
$|F+K_X|$ and $|F+2K_X|$ induces an elliptic fibration with a section as follows:

\begin{theorem}[cf.\ {\upshape \cite{Kitagawa6}}]\label{Thm:A}
Let $f:X\to\mathbb{P}^1$ be a fibred rational surface which has 
a birational morphism $\upsilon_0:X\to\mathbb{P}^2$ giving $(\mathrm{A})$ 
as in {\upshape Theorem~\ref{MT}} and $(Y, G)$ the reduction of $(X, F)$. 
Then $|-K_Y|$ is a pencil of elliptic curves with one base point, and $G\sim-2K_Y$. 
Conversely, any minimal elliptic rational surface with a section induces 
a fibred rational surface of genus two as the above. 
\end{theorem}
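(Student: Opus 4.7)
The plan is to handle the two directions separately: the forward direction by explicit computation in $\mathrm{NS}(X)$ using case $(\mathrm{A})$ of Theorem~\ref{MT}, and the converse by constructing a pencil inside $|-2K_Y|$.

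For the forward direction, I would start from $F\sim 6\ell-2\sum_{i=1}^{8}e_i-\sum_{i=9}^{12}e_i$ and $K_X\sim -3\ell+\sum_{i=1}^{12}e_i$, which together give $K_X+F\sim 3\ell-\sum_{i=1}^{8}e_i$. Then $(K_X+F).e_i=0$ exactly for $9\leq i\leq 12$, so these four $(-1)$-curves are precisely the ones contracted to form the reduction $(Y,G)$. Hence $Y$ is $\mathbb{P}^2$ blown up at eight points, and pushing $F$ down gives $G\sim 6\ell-2\sum_{i=1}^{8}e_i=-2K_Y$, which is the second assertion. Since $-K_Y$ is the descent of the nef divisor $K_X+F$, it is nef, with $(-K_Y)^2=1$. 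Riemann--Roch gives $\chi(-K_Y)=1+K_Y^2=2$, and Kawamata--Viehweg vanishing applied with $-2K_Y$ nef and big kills the higher cohomology, so $h^0(-K_Y)=2$ and $|-K_Y|$ is a pencil; the relation $(-K_Y)^2=1$ forces exactly one base point. Members have arithmetic genus $1$ by adjunction, and a standard Bertini argument away from the base point shows that the general member is smooth.

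For the converse, let $Y'$ be a minimal elliptic rational surface with a section $S$. Then $Y'\cong\mathrm{Bl}_{9}\mathbb{P}^2$, the fibration is $|-K_{Y'}|$, and $S^2=-1$ because $S$ is a section of a genus one fibration on a rational surface. Contracting $S$ yields a smooth rational surface $Y$ with $K_Y^2=1$ on which the image of $|-K_{Y'}|$ is a pencil of elliptic curves with unique base point the image of $S$. By Riemann--Roch and vanishing, $h^0(-2K_Y)=1+3K_Y^2=4$, and the standard description of weak del Pezzo surfaces of degree one shows $|-2K_Y|$ is base-point-free. A generic pencil $\Lambda\subset|-2K_Y|$ has $(-2K_Y)^2=4$ distinct simple base points, and its general member is smooth of arithmetic genus $1+(-2K_Y)(-K_Y)/2=2$ by adjunction. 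Resolving the four base points gives a smooth rational surface $X$ and a relatively minimal genus two fibration $f:X\to\mathbb{P}^1$; composing the contraction $Y\to\mathbb{P}^2$ of eight $(-1)$-curves with $X\to Y$ produces a birational morphism $X\to\mathbb{P}^2$ realising case $(\mathrm{A})$.

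The main obstacle is the converse, specifically that $|-2K_Y|$ is base-point-free when $-K_Y$ is only nef and big (not necessarily ample) and that a generic pencil yields a relatively minimal fibration. Both are standard for weak del Pezzo surfaces of degree one but require care to state precisely, and this is presumably where the cited reference~\cite{Kitagawa6} supplies the details.
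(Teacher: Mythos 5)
The paper does not actually prove Theorem~\ref{Thm:A}: it is stated with ``cf.~\cite{Kitagawa6}'', a reference in preparation, so there is no in-paper argument to compare yours against. Judged on its own, your proposal is correct in substance and follows the route the surrounding text suggests: the forward direction is the Picard-lattice computation identifying $(Y,G)$ with a weak del Pezzo surface of degree one carrying $G\sim-2K_Y$, and the converse is the standard bianticanonical construction on such a surface. Two places deserve a little more care than you give them. First, in the forward direction, to pass from ``$(K_X+F).e_i=0$ for $i\geq 9$'' to ``$Y$ is the eight-point blow-up'' you should invoke the uniqueness of the reduction (the paper notes $Y$ is the minimal resolution of $\mathrm{Proj}(R(X,K_X+F))$) together with the observation that once $G\sim-2K_Y$ no further contraction is possible: a $(-1)$-curve $E$ with $(K_Y+G).E=-K_Y.E=0$ would have $K_Y.E=0$, contradicting $K_Y.E=-1$; also the points $p_9,\dots,p_{12}$ cannot lie under $p_1,\dots,p_8$ in the infinitely-near ordering, so the intermediate contraction really exists. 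Second, the facts you defer to the ``standard description of weak del Pezzo surfaces of degree one'' ($|-K_Y|$ a pencil with one simple base point and smooth general member, $|-2K_Y|$ base-point-free defining a degree-two map to a quadric cone, a general pencil therein having four reduced base points) are indeed standard in characteristic zero, but since $-K_Y$ is only nef and big here they are exactly the content one would expect \cite{Kitagawa6} to supply; your argument is complete once they are granted. With those points filled in, both directions go through.
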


\begin{theorem}[cf.\ {\upshape \cite{Kitagawa6}}]\label{Thm:Btwo}
Let $f:X\to\mathbb{P}^1$ be a fibred rational surface which has a birational morphism 
$\upsilon_0:X\to\mathbb{P}^2$ giving $(\mathrm{B}2)$ as in {\upshape Theorem~\ref{MT}}. 
Then the sum $(e_{10}+e_9)$ as in {\upshape Theorem~\ref{MT}} is unique. 
In particular, it is independent of a choice of $\upsilon_0$. 
Furthermore, $e_{10}$ and $e_9$ are disjoint from the $(-1)$-section $e_{11}$ of $f$. 
Let $S$ be the surface obtained from $X$ by contracting $e_{10}$ and $e_9$. 
Then $\Phi_{|-K_S|}:S\to\mathbb{P}^1$ is a minimal elliptic rational surface and 
the image of $e_{11}$ on $S$ is a section of $\Phi_{|-K_S|}$. 
Conversely, any minimal elliptic rational surface with a section induces 
a fibred rational surface of genus two as the above. 
\end{theorem}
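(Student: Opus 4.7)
The plan has three layers: (i) characterise $e_9, e_{10}$ intrinsically inside $\mathrm{NS}(X)$, settling uniqueness and disjointness; (ii) build $\mu: X \to S$ as the contraction of $e_9\cup e_{10}$ and identify $|-K_S|$ with the anticanonical pencil of a rational elliptic surface; (iii) invert the construction.

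For (i), I would classify all $(-1)$-bisections of $f$ in $\mathrm{NS}(X)$. Writing $C = c_0\ell + \sum_{i=1}^{11} c_i e_i$ and imposing $C^2 = -1$, $C\cdot K_X = -1$, $C\cdot F = 2$, the two linear conditions simplify to $6c_0 + 2\sum_{i=1}^{8} c_i + (c_9+c_{10}) = 1$ and $c_{11} = 1 - 3c_0 - \sum_{i=1}^{10} c_i$; combined with $\sum_{i=1}^{11} c_i^2 = c_0^2 + 1$, a short case analysis using the Cauchy--Schwarz inequality forces $c_0 = 0$ and narrows the remaining options to $C = e_9$ or $C = e_{10}$. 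Hence the unordered pair of $(-1)$-bisections of $f$ is canonically $\{e_9, e_{10}\}$, so the divisor $e_9+e_{10}$ is determined by $(X,f)$ alone, independent of $\upsilon_0$; and since $e_9, e_{10}, e_{11}$ are three distinct irreducible curves pairwise orthogonal in $\mathrm{NS}(X)$, they are mutually disjoint on $X$.

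For (ii), let $\mu: X \to S$ be the contraction of $e_9\cup e_{10}$. Direct computation yields $K_S^2 = K_X^2 + 2 = 0$, $\mu^{*}(-K_S) \sim 3\ell - \sum_{i=1}^{8} e_i - e_{11}$, and the key linear equivalence
\begin{equation*}
F \;\sim\; 3\mu^{*}(-K_S) + 2 e_{11} - 2(e_9 + e_{10}),
\end{equation*}
which pushes down to $\mu_{*}F \sim -3K_S + 2\bar{e}_{11}$ on $S$, where $\bar{e}_{11} = \mu(e_{11})$ is a $(-1)$-curve with $\bar{e}_{11}\cdot(-K_S) = 1$. A further contraction of $\bar{e}_{11}$ produces a weak del Pezzo surface $S'$ of degree one with $-K_{S'} \sim 3\ell - \sum_{i=1}^{8} e_i$, whose anticanonical system is the pencil of plane cubics through $p_1,\ldots,p_8$; by Cayley--Bacharach this pencil admits a unique ninth base point $p^{*}$. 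Granted $p_{11} = p^{*}$, the system $|-K_S|$ becomes a base-point-free pencil of elliptic curves, $\Phi_{|-K_S|}: S \to \mathbb{P}^{1}$ is a minimal elliptic fibration, and $\bar{e}_{11}$ is a section.

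The main obstacle is the identification $p_{11} = p^{*}$, which I would prove by contradiction. Assuming $p_{11} \neq p^{*}$, the restriction map $H^0(X, F + 2K_X - e_{11}) \to H^0(e_{11}, \mathcal{O})$ is surjective, so the short exact sequence
\begin{equation*}
0 \to \mathcal{O}(F+2K_X - 2e_{11}) \to \mathcal{O}(F+2K_X-e_{11}) \to \mathcal{O}_{e_{11}}(0) \to 0
\end{equation*}
forces $h^0(\mu^{*}(-K_S)) = h^0(3\ell - \sum_{i=1}^{8} e_i) - 1$. Combining this with the subsequent short exact sequences reflecting the multiplicities $2$ at $q_9, q_{10}$ and the fibre-restriction of $F$ then shows that the pencil $|F|$ must collapse to a single effective divisor, contradicting the hypothesis that $|F|$ defines the fibration $f$. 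Equivalently, any reducible member of $|F|$ produces a cubic distinct from the Cayley--Bacharach cubic through $p_1,\ldots,p_8,p_{11}$, forcing $p_{11} = p^{*}$. Finally, for (iii), I start from a minimal rational elliptic surface $S \to \mathbb{P}^{1}$ with section $\sigma$, choose two points $q_9, q_{10} \in S$ lying on distinct smooth elliptic fibres and off $\sigma$, and blow them up to obtain $X$ with disjoint exceptional $(-1)$-curves $e_9, e_{10}$ and strict transform $e_{11}$ of $\sigma$; the class $-3K_X + e_9 + e_{10} + 2 e_{11}$ defines a pencil of smooth irreducible curves of geometric genus two, verified by adjunction and a direct dimension count, yielding the fibration $f$ of type $(\mathrm{B}2)$.
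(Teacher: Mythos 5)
The paper never proves this theorem: it is stated with ``cf.~\cite{Kitagawa6}'', a reference listed as in preparation, so there is no internal proof to compare against. The surrounding text does reveal the intended route --- the sentence before Theorem~\ref{Thm:A} says $|F+2K_X|$ induces the elliptic fibration, Lemma~\ref{BoneBtwo} characterises case $(\mathrm{B}2)$ by the branch divisor $B^\natural$ containing a minimal section, and the paragraph after Theorem~\ref{Thm:Btwo} describes the converse via a special subpencil of $|-3K_S+2(O)_\epsilon|$ --- which is quite different from your Cayley--Bacharach plan. Your part (i) is sound: the lattice enumeration of classes with $C^2=C\cdot K_X=-1$, $C\cdot F=2$ does pin down $\{e_9,e_{10}\}$ (though for $c_0\geq 2$ plain Cauchy--Schwarz is not quite enough, e.g.\ $c_0=2$ needs $|2\sum_1^8c_i+c_9+c_{10}|=11$ against $\sqrt{34\cdot 5}\approx 13$, so a finer parity/integrality check is required; and you should also say why both classes are represented by irreducible curves, i.e.\ why $p_{10}$ is not infinitely near $p_9$).

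The genuine gaps are in (ii) and (iii). In (ii) the whole content of the theorem is the identification $p_{11}=p^*$, equivalently $h^0(-K_S)=2$ with $|-K_S|$ base point free, and your argument for it is only an assertion that ``the pencil $|F|$ must collapse''. The naive count in fact points the other way: $h^0(\mathcal{O}_{\mathbb{P}^2}(9))=55$ while eight triple points, two double points and one simple point impose $48+6+1=55$ conditions, so $|F|$ being a pencil means the conditions already fail independence by two; you cannot reach a contradiction merely by observing that $p_{11}\neq p^*$ costs one more condition, and the chain of exact sequences you invoke is not carried out. In (iii) the construction is actually wrong as stated: on a minimal elliptic rational surface $S$ one computes $(-3K_S+2(O))^2=8$, $(-3K_S+2(O))\cdot K_S=-2$, and Riemann--Roch with vanishing $h^1,h^2$ gives $h^0(-3K_S+2(O))=6$; imposing double points at two \emph{general} points $q_9,q_{10}$ (on distinct smooth fibres, off the section) is six conditions, so the space of sections of $-3K_X+e_9+e_{10}+2e_{11}$ has expected dimension zero and is not a pencil. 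The two blown-up points cannot be chosen freely: as the paper's own follow-up paragraph indicates, one must first exhibit a subpencil of $|-3K_S+2(O)_\epsilon|$ whose general member has exactly two double points which are the base points, and then blow up those two points; the existence of such a subpencil is itself the nontrivial content of the converse and is missing from your proposal.
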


Let $\epsilon:S\to\mathbb{P}^1$ be any minimal elliptic surface with a section $(O)_\epsilon$. 
From Theorem~$\ref{Thm:Btwo}$, we can take a subpencil of $|-3K_S+2(O)_\epsilon|$ 
whose general members have just two double points as its singularities, 
which are exactly the base points. 
Furthermore, a fibred rational surface $f:X\to\mathbb{P}^1$ which has a birational morphism 
$\upsilon_0:X\to\mathbb{P}^2$ giving $(\mathrm{B}2)$ as in {\upshape Theorem~\ref{MT}} 
is obtained from the subpencil by blowing $S$ up at the two base points. 
Then the strict transform of $(O)_\epsilon$ to $X$ is the $(-1)$-section $e_{11}$ of $f$. 
Let $(Z, H)$ be the image of $(X, F)$ by contracting $e_{11}$, $e_{10}$ and $e_9$. 
Then any \#-minimal model of the reduction $(Y, G)$ is obtained from $(Z, H)$ by contracting 
step by step a $(-1)$-curve whose intersection number with $H$ is three. 
In particular, we remark that $Z$ is obtained from $S$ by contracting $e_{11}$.

\begin{corollary}\label{Cor:nonMSzeroStwo}
For each of $d=0$ and $2$, when a birational morphism $\upsilon_0:X\to\mathbb{P}^2$ 
as in {\upshape Theorem~\ref{MT}} gives $(\mathrm{A})$ or $(\mathrm{B}2)$, 
there exists a fibred rational surface $f:X\to\mathbb{P}^1$ not admitting $\Sigma_d$ 
as the surface of a \#-minimal model of the reduction. 
\end{corollary}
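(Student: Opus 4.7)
The plan is to mimic Lemmas~\ref{Lem:suff}--\ref{Lem:BoneCSzero}, now exploiting the correspondences with minimal elliptic rational surfaces with a section from Theorems~\ref{Thm:A} and~\ref{Thm:Btwo}.

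First, for $d=2$. From $G^\#\sim(a+2)\Delta_0+(\breve b+(d+2)(a+2)/2)\Gamma$ with $\breve b=0$ in both cases $(\mathrm{A})$ and $(\mathrm{B}2)$, one computes
\[
\Delta_0\cdot G^\#=\frac{(a+2)(2-d)}{2},
\]
which vanishes at $d=2$. Hence no singular point of $G^\#$ lies on the minimal section $\Delta_0$ and, exactly as in the proof of Lemma~\ref{Lem:suff}, the pull-back of $\Delta_0$ to $X$ is a $(-2)$-section of $f$. It therefore suffices, in each of the two cases, to exhibit a fibred rational surface having no $(-2)$-section. Following Examples~\ref{Ex:Bonemts} and~\ref{Ex:Cmts}, we write down an explicit branch divisor $B^\natural$ on $W^\natural$ of the linear equivalence class $(\ref{LECofB})$ with Horikawa singularities compatible with $(\ref{THindex})$, with $(K_X+F)^2=1$ in case $(\mathrm{A})$ (Theorem~\ref{Thm:A} then confirming we are in case $(\mathrm{A})$) and with $(K_X+F)^2=2$ together with a minimal section of $W^\natural$ as an irreducible component of $B^\natural$ in case $(\mathrm{B}2)$ (by Lemma~\ref{BoneBtwo}). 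By Lemma~\ref{Lem:mts}, any $(-2)$-section of the resulting $f$ would descend to a section of $W^\natural$ of class $\Delta_0$ or $\Delta_0+\Gamma$ passing through a prescribed subset of the $2(K_X+F)^2$ distinguished points on $B^\natural$; arranging $B^\natural$ generically forbids every such section and hence every $(-2)$-section.

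For $d=0$, we use Theorems~\ref{Thm:A} and~\ref{Thm:Btwo} to pass to the underlying minimal elliptic rational surface $\epsilon:S\to\mathbb{P}^1$ with section $(O)_\epsilon$. A \#-minimal model of $(Y,G)$ on $\Sigma_0$ amounts, on $Y$ in case $(\mathrm{A})$ and on the contraction $Z$ of $e_{11}$ in case $(\mathrm{B}2)$, to the existence of two base-point-free pencils $|\Gamma_1|$, $|\Gamma_2|$ of rational curves with $\Gamma_i^2=0$, $\Gamma_1\cdot\Gamma_2=1$, and $G\cdot\Gamma_i=a+2$. We choose $\epsilon$ extremal, with a singular fibre of type $\mathrm{II}^*$ in the Kodaira list (which forces the Mordell--Weil group to be trivial by~\cite{OS}). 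On such a surface the $E_8$-configuration of $(-2)$-curves in the $\mathrm{II}^*$ fibre together with the triviality of the Mordell--Weil group make the ruling of $S$ with $-K_S\cdot \Gamma_1=2$ essentially unique (the one giving the $\Sigma_1$-model); no second independent pencil $|\Gamma_2|$ with $\Gamma_1\cdot\Gamma_2=1$ exists, and so the resulting $f$ admits no \#-minimal model on $\Sigma_0$.

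The main obstacle is the $d=0$ step: the rigorous verification, on the chosen extremal elliptic rational surface, that no second independent base-point-free pencil with $\Gamma_1\cdot\Gamma_2=1$ and the prescribed numerical invariants exists. This requires an enumeration of classes in $\mathrm{NS}(S)$ of square zero and $K_S$-degree $-2$, together with the analysis of their intersection with the $\mathrm{II}^*$-fibre components and with the section $(O)_\epsilon$; the enumeration is carried out in the companion paper~\cite{Kitagawa6}, on which Theorems~\ref{Thm:A} and~\ref{Thm:Btwo} already rely. By contrast, the $d=2$ case reduces cleanly to the absence of $(-2)$-sections, which is obtained from a generic branch divisor as above.
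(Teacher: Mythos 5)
There is a genuine gap in your $d=2$ step, and it is not a small one: the reduction to $(-2)$-sections is based on a misapplication of Lemma~\ref{Lem:suff}. That lemma concerns the cases $(\mathrm{B}1)$ and $(\mathrm{C})$, where $\breve b=1$ and hence $\Delta_0\cdot G^\#=1$ on $\Sigma_2$, so the pull-back of $\Delta_0$ really is a section. In the cases $(\mathrm{A})$ and $(\mathrm{B}2)$ one has $\breve b=0$, and your own computation gives $\Delta_0\cdot G^\#=(a+2)(2-d)/2=0$ at $d=2$. A curve whose image is disjoint from $G^\#$ pulls back to an irreducible $(-2)$-curve $D$ with $D\cdot F=0$, i.e.\ a component of a \emph{fibre} of $f$, not a $(-2)$-section. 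So admitting $\Sigma_2$ does not force a $(-2)$-section, and constructing branch divisors $B^\natural$ for which Lemma~\ref{Lem:mts} excludes $(-2)$-sections (which you in any case only sketch, never exhibiting the example) proves nothing about the corollary; note also that in case $(\mathrm{B}2)$ the fibration always has the $(-1)$-section $e_{11}$, so the horizontal geometry is not where the obstruction lives. The correct obstruction is vertical, and the paper gets it from Theorems~\ref{Thm:A} and \ref{Thm:Btwo}: choose the associated minimal elliptic rational surface $\epsilon:S\to\mathbb{P}^1$ to be general, so that every fibre of $\epsilon$ is irreducible; then the genus formula gives $C^2\geq-1$ for every irreducible curve on $S$, hence there is no birational morphism from $S$ (and therefore none from $Z$ or $Y$, which are blow-downs of $S$) onto $\Sigma_2$, whose minimal section would have to be the image of a curve of self-intersection at most $-2$.

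Your $d=0$ step is essentially the paper's argument: take $\epsilon$ with a singular fibre of type $\mathrm{II}^*$, so that $S$ has a unique ruling whose only relatively minimal models are $\Sigma_1$ and $\Sigma_2$, and conclude that no birational morphism $S\to\Sigma_0$ (hence no \#-minimal model on $\Sigma_0$) exists. You defer the uniqueness of the ruling to an enumeration in the companion paper, whereas the paper invokes it directly as a known property of such extremal elliptic surfaces; that part is acceptable. But as written the proposal does not prove the $d=2$ half of the statement, and the fix is to replace the $(-2)$-section analysis by the ``no curves of self-intersection $\leq-2$ on a general $S$'' argument above.
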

\begin{proof}
Keep the situation and the notation as before Corollary~\ref{Cor:nonMSzeroStwo}. 
At first, assume that $\epsilon:S\to\mathbb{P}^1$ is a general one, 
or any fibre of $\epsilon$ is irreducible. 
Then $C^2\geq-1$ for all of irreducible curves $C$ on $S$ from the genus formula. 
Hence, there exists no birational morphism from $S$ to $\Sigma_d$ for $d\geq2$. 
Thus $\Sigma_2$ does not appear as the surface of any \#-minimal model of $(Y, G)$.

Next, we assume that $\epsilon$ has a singular fibre of type $\mathrm{II}^*$ 
in the sense of Kodaira \cite{Kodai63}. 
Then a ruling on $S$ is unique, and we can only take $\Sigma_1$ or $\Sigma_2$ 
as its relatively minimal model. 
In particular, there exists no birational morphism from $S$ to $\Sigma_0$. 
Therefore, $f:X\to\mathbb{P}^1$ does not admit $\Sigma_0$ as such a model.

The case of $(\mathrm{A})$ is similar and simpler by applying Theorem~$\ref{Thm:A}$. 
\end{proof}

\smallskip

This completes the proof of Theorem~\ref{PlanePlan}. 
\hspace*{\fill}$\square$

%
%
%
%
%
%
%
%
%
%
\section{Trivial Mordell-Weil groups}
Tsen's theorem saw that any ruled surface has a section. 
By mimicking the proof in \cite{Bea96}, we have the following. 
\begin{proposition}\label{section}
Every genus two fibration on a smooth projective surface whose geometric genus is zero has a section. 
\end{proposition}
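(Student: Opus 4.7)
The plan is to adapt the classical proof of Tsen's theorem for ruled surfaces given in \cite{Bea96}, replacing the rational generic fibre by the hyperelliptic structure available for curves of genus two. As a preliminary step, I would reduce to the case in which the base $B$ of $f$ is $\mathbb{P}^{1}$: the exact sequence $0\to f^{*}\omega_{B}\to\omega_{X}\to\omega_{X/B}\to 0$ yields an injection $H^{0}(B,\omega_{B})\hookrightarrow H^{0}(X,\omega_{X})$, so the hypothesis $p_{g}(X)=0$ forces $g(B)=0$ and hence $B\cong\mathbb{P}^{1}$.

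Next I would set up the relative canonical map. The pushforward $f_{*}\omega_{X/\mathbb{P}^{1}}$ is a rank two locally free sheaf on $\mathbb{P}^{1}$ which, by Grothendieck's splitting theorem, decomposes as a direct sum of two line bundles; consequently $W:=\mathbb{P}_{\mathbb{P}^{1}}(f_{*}\omega_{X/\mathbb{P}^{1}})$ is a Hirzebruch surface $\Sigma_{e}$. Since every smooth fibre of $f$ is hyperelliptic, the canonical map induces a rational map $\phi\colon X\dashrightarrow W$ of degree two onto $W$, whose deck involution restricts to the hyperelliptic involution on each general fibre. Tsen's theorem, or directly the existence of the minimal section $\Delta_{0}$, furnishes sections of $W\to\mathbb{P}^{1}$ in abundance, and for each section $\sigma$ the preimage $\phi^{-1}(\sigma)$ is a $2$-section of $f$.

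The main step is to promote such a $2$-section to an honest section of $f$. The preimage $\phi^{-1}(\sigma)$ splits into two sections of $f$ exchanged by the hyperelliptic involution precisely when the \'etale double cover of $\sigma\cong\mathbb{P}^{1}$ induced by $\phi$ is trivial, i.e.\ when $\sigma$ is disjoint from the branch divisor $\Delta\subset W$ of $\phi$; if instead $\sigma$ is itself a component of $\Delta$, then $\phi^{-1}(\sigma)$ is a section taken with multiplicity two. The principal obstacle is therefore to rule out the possibility that every section of $W\to\mathbb{P}^{1}$ maps to an irreducible $2$-section of $X$. Here the hypothesis $p_{g}(X)=0$ enters decisively: combined with the double-cover formula relating $p_{g}(X)$ to $K_{W}+\tfrac{1}{2}\Delta$, together with the singularity corrections coming from Horikawa's canonical resolution, it constrains the numerical type and singular locus of $\Delta$ on $\Sigma_{e}$ sufficiently that a case analysis produces in every case a section of $W\to\mathbb{P}^{1}$ either contained in $\Delta$ or disjoint from it, whose pullback yields the desired section of $f$.
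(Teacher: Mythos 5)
Your approach is genuinely different from the paper's, but it has two gaps, one of which is fatal as written. The paper's proof is a short lattice-theoretic argument in the style of Beauville's proof of Tsen's theorem: since $p_g(X)=0$, $\mathrm{NS}(X)=H^2(X,\mathbb{Z})$ is unimodular under the intersection form, so the multisection index $n$ (the positive generator of $\{D.F\}$) satisfies $F\equiv nF'$ numerically with ${F'}^2=0$; then $nF'.K_X=F.K_X=2$ together with the parity $F'.K_X\equiv {F'}^2\equiv 0\pmod 2$ forces $n=1$, and adding $f^*L$ for $L$ sufficiently ample produces an effective divisor with a component meeting $F$ once. Note that this argument never assumes $B\cong\mathbb{P}^1$, and your reduction to that case is already shaky: the sequence $0\to f^*\omega_B\to\omega_X\to\omega_{X/B}\to 0$ is not an exact sequence (one has $\omega_X\cong f^*\omega_B\otimes\omega_{X/B}$, not an extension), and $p_g(X)=0$ does not force $g(B)=0$ --- e.g.\ $(C\times E)/(\mathbb{Z}/2)$, with the hyperelliptic involution on the genus-two curve $C$ and a translation on the elliptic curve $E$, is a genus-two fibre bundle over an elliptic curve with $p_g=0$.

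The fatal gap is in your main step. The preimage $\phi^{-1}(\sigma)$ of a section $\sigma$ of $W\to\mathbb{P}^1$ splits exactly when $\sigma$ meets the branch divisor $\Delta$ with \emph{even local multiplicity at every point} (with the appropriate convention at singular points of $\Delta$); ``disjoint from $\Delta$ or contained in $\Delta$'' is only a very special instance of this, and such sections need not exist. Since $\Delta$ contains a $6$-section of $W\to\mathbb{P}^1$, any section $\sigma$ has $\sigma.\Delta\geq 6$ after twisting unless the numerics are extremely degenerate, so genuinely disjoint sections are the exception, not the rule. The paper's own Example~\ref{Ex:Bonemts} exhibits a fibred rational surface where $B^\natural=A+C$ with $C$ a bisection of the ruling: no section of $pr_1$ is a component of $B^\natural$, every section meets it positively, and the minimal sections are shown to pull back to \emph{irreducible} bisections of $f$; nevertheless $f$ has a section, necessarily of self-intersection at most $(-3)$ (Remark~\ref{Rmk:SINsec}), whose image is a non-minimal section of $W$ with everywhere-even contact with $\Delta$. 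Your ``case analysis'' would therefore have to prove the existence of a section of $W\to\mathbb{P}^1$ with even tangency to $\Delta$ at every intersection point --- which is essentially equivalent to Proposition~\ref{section} itself --- and as formulated (looking only for disjoint or contained sections) it would return no candidate in this example. The difficulty has been deferred rather than resolved.
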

\begin{proof}
Let $X$ be a smooth projective surface whose geometric genus is zero, 
$B$ a smooth projective curve and 
$f:X\to B$ a relatively minimal fibration whose general fibre $F$ is a genus two curve. 
$\mathrm{NS}(X)$ coincides with $H^2(X,\mathbb{Z})$, 
since the geometric genus is zero. 
As $D$ runs through $H^2(X,\mathbb{Z})$, 
the set of integers $D.F$ is an ideal in $\mathbb{Z}$, 
of the form $n\mathbb{Z}$ with $n\geq1$. 
Consider the map $D \mapsto (1/n)(D.F)$ is a linear form on $H^2(X,\mathbb{Z})$. 
From Poincar\'e duality, there exists a divisor $F'\in H^2(X,\mathbb{Z})$ such that 
$D.F'=(1/n)(D.F)$ for all $D\in H^2(X,\mathbb{Z})$, 
so that $F$ is numerically equivalent to $nF'$. 
Remark that ${F'}^2=0$ from $n^2{F'}^2=F^2=0$. 
Furthermore, from the genus formula, we have $nF'.K_X=F.K_X+F^2=2$ and 
$F'.K_X=F'.(K_X+F')$ is even. 
These imply $n=1$. 
Hence, there exists a divisor $D_0\in H^2(X,\mathbb{Z})$ such that $D_0.F=1$. 
Take a sufficiently ample divisor $L$ on $B$, 
then we have an effective divisor $E\in|D_0+f^*L|$. 
Thus, there exists a component $C$ of $E$ such that $C.F=1$. 
\end{proof}

A relatively minimal fibration $f:X\to\mathbb{P}^1$ of genus two 
on a smooth rational surface always has a section from Proposition~\ref{section}. 
Furthermore, we are interested in Mordell-Weil group and lattice of $f$ 
introduced by Shioda \cite{Shioda99}.

\begin{theorem}\label{Thm:Ex}
For each $(\mathrm{A})$, $(\mathrm{B}1)$, $(\mathrm{B}2)$ or $(\mathrm{C})$ 
in {\upshape Theorem~\ref{MT}}, 
there exists a genus two fibration on a rational surface whose Mordell-Weil group is trivial. 
\end{theorem}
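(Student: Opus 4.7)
The plan is to exhibit, in each of the four cases, a concrete fibred rational surface $f:X\to\mathbb{P}^1$ whose reducible fibres together account for the full ``rank budget'' required to force the Mordell-Weil rank to vanish, and then to rule out torsion by direct inspection of the model. The rank computation I would use is the Shioda-Tate type formula from \cite[Theorem~3]{Shioda99} applied to the Jacobian surface of the generic fibre of $f$ (as already invoked inside the proof of Lemma~\ref{Lem:BoneCSzero}): the rank of $\mathrm{MW}(f)$ equals $\rho(X)-2-\sum_v(m_v-1)$, where the sum runs over reducible fibres and $m_v$ counts their irreducible components. Thus in cases $(\mathrm{A})$, $(\mathrm{B}1)$, $(\mathrm{B}2)$, $(\mathrm{C})$ I need $\sum(m_v-1)$ to equal $11$, $10$, $10$, $9$ respectively.

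For cases $(\mathrm{A})$ and $(\mathrm{B}2)$ I would reduce to the elliptic situation via Theorems~\ref{Thm:A} and \ref{Thm:Btwo}. Start from a minimal rational elliptic surface $\epsilon:S\to\mathbb{P}^1$ equipped with a section and a singular fibre of type $\mathrm{II}^*$; such an $\epsilon$ has trivial Mordell-Weil group by \cite{OS}. In case $(\mathrm{B}2)$, blow up $S$ at the two base points of a general subpencil of $|-3K_S+2(O)_\epsilon|$, as described after Theorem~\ref{Thm:Btwo}, to obtain $X$ and the fibration $f$; the strict transform of $(O)_\epsilon$ becomes the $(-1)$-section $e_{11}$ of $f$, and the $\mathrm{II}^*$-fibre of $\epsilon$ pulls back to a reducible fibre of $f$ whose dual graph contains the extended $E_8$ diagram of Figure~\ref{fig:FL(B2)}, accounting for the required ten components. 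Case $(\mathrm{A})$ is handled analogously on the reduction $Y$ using $|-2K_Y|$. In both cases triviality of $\mathrm{MW}(f)$ follows because any section of $f$ pushes down to a section of $\epsilon$; the zero Mordell-Weil group of $\epsilon$ together with the rank formula rules out both positive rank and torsion.

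For cases $(\mathrm{B}1)$ and $(\mathrm{C})$ there is no elliptic intermediary, so I would specialise the branch divisors $B^\natural$ of Examples~\ref{Ex:Bonemts} and \ref{Ex:Cmts} further, arranging additional singularities of Horikawa's types $(\mathrm{II}_k)$, $(\mathrm{IV}_k)$ and $(\mathrm{V})$ so that $(\ref{LECofB})$ and $(\ref{THindex})$ are still satisfied and $(\ref{eq:Rationality})$ is preserved. The fibres of $W^\natural$ carrying these singularities pull back to reducible fibres of $f$ whose dual graphs collectively contain the frames of Figures~\ref{fig:FL(B1)} and \ref{fig:FL(C)}, giving the prescribed component counts. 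The rank formula then forces $\mathrm{rank}\,\mathrm{MW}(f)=0$.

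The hardest step, and the one I expect to be delicate, is ruling out torsion in $(\mathrm{B}1)$ and $(\mathrm{C})$, since there the frame lattice is not of $E_8$ type and one cannot appeal to the elliptic classification. The practical route is to use the planar model $\upsilon_0:X\to\mathbb{P}^2$ from Theorem~\ref{MT}: in the $\mathbb{Z}$-basis $(\ell,e_1,\ldots,e_{11})$ or $(\ell,e_1,\ldots,e_{10})$ of $\mathrm{NS}(X)$ the classes $D$ with $D.F=1$ and $D^2\geq -2$ that could represent a section form an explicit finite list, and for the specific branch divisor chosen one verifies directly on the double cover that none of them, except those already contained in the lattice generated by a chosen zero section, the fibre class and the components of the reducible fibres, is represented by an irreducible curve. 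This finite-case verification completes the construction.
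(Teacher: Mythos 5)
Your overall strategy --- explicit double covers of $W^\natural$ with degenerate branch loci, a component count on the reducible fibres to kill the Mordell--Weil rank, and a separate argument for torsion --- is the same in spirit as the paper's, which proves the theorem by four explicit examples (Examples~\ref{Ex:ATrMWG}--\ref{Ex:CTrMWG}). But two steps in your proposal are genuine gaps. First, the existence of branch divisors $B^\natural$ realizing the required singularity configurations is the substantive content of the proof, and you only assert it: ``specialise the branch divisors of Examples~\ref{Ex:Bonemts} and \ref{Ex:Cmts} further, arranging additional singularities so that $(\ref{LECofB})$ and $(\ref{THindex})$ are still satisfied'' is exactly the nontrivial construction, and note that those two examples were built for a different purpose (ruling out $\Sigma_2$ models in Theorem~\ref{PlanePlan}); the paper uses entirely different equations for the trivial--Mordell--Weil examples, and one must also verify after canonical resolution that the fibres really have the claimed number of components and that $(\ref{eq:Rationality})$ holds.

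Second, and more seriously, your torsion argument does not work as stated. In Shioda's framework the Mordell--Weil group is the quotient $\mathrm{NS}(X)/T$ by the trivial lattice $T$ generated by $F$, a chosen section and the irreducible components of fibres, so a torsion element is merely a class in this quotient: it need not be represented by a section of $f$, nor by any irreducible curve $D$ with $D.F=1$ and $D^2\geq-2$. Hence your ``finite-case verification'' for $(\mathrm{B}1)$ and $(\mathrm{C})$ tests the wrong condition, and your claim for $(\mathrm{A})$ and $(\mathrm{B}2)$ that ``any section of $f$ pushes down to a section of $\epsilon$'' both is unjustified (the fibres of $f$ and $\epsilon$ are different pencils on $S$) and conflates the Mordell--Weil group of the genus-two fibration, which lives on the Jacobian of its generic fibre, with that of the elliptic surface. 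The clean way to kill rank and torsion simultaneously --- and what the paper does --- is to write the section and all reducible-fibre components explicitly in the basis $(\ell,e_1,\ldots)$ of $\mathrm{NS}(X)$ and exhibit an orthogonal decomposition of $\mathfrak{NS}(X)$ into unimodular pieces generated by these classes, so that $T=\mathrm{NS}(X)$ and $\mathrm{MW}(f)=0$ by \cite[Theorem~1]{Shioda99}. Without that (or an equivalent index computation) your argument only bounds the rank, not the group.
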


As a proof of Theorem~\ref{Thm:Ex}, we give an example for each 
$(\mathrm{A})$, $(\mathrm{B}1)$, $(\mathrm{B}2)$ or $(\mathrm{C})$ in Theorem~\ref{MT} 
with a concrete description of $(-1)$-curves contracted by a birational morphism $\upsilon_0:X\to\mathbb{P}^2$. 
We also use the notation and the setup as before Example~\ref{Ex:Bonemts}
throughout the following four examples. 
In addition, we denote by $F_p$ the fibre of $p\in\mathbb{P}^1$ by $f$.

At first, we describe an example of $(\mathrm{A})$ in Theorem~\ref{MT} 
with trivial Mordell-Weil group.

\begin{example}\label{Ex:ATrMWG}
Let $A$ be Zariski closure on $W^\natural$ of the divisor defined by $x^5+t^3+t^2x=0$, 
which is irreducible. 
The singularities of $A$ are on $(0,0)$ and on $(\infty,\infty)$. 
Put $B^\natural=A+\varDelta_0+\varGamma_0$, which is linearly equivalent to $6\varDelta_0+4\varGamma$. 
In fact, the singularities of $B^\natural$ are 
of type $(\mathrm{V})$ on $\varGamma_0$ and of type $(0)$ on $\varGamma_\infty$. 
Take the finite double cover of $W^\natural$ branched along $B^\natural$ 
and the canonical resolution. 
Then $( \sigma^\natural )^*\varDelta_0.(-K_{W^\flat}-3( \sigma^\natural )^*\varGamma)=-1$ 
implies $(\ref{eq:Rationality})$. 
Thus we have a relatively minimal fibration $f:X\to\mathbb{P}^1$ of genus two 
on a rational surface with $(K_X+F)^2=1$.

The strict transform by $\sigma\circ\widetilde{\pi}:\widetilde{X}\to W^\natural$ of $\varDelta_0$ 
is the pull-back by $\varphi:\widetilde{X}\to X$ of a double $(-1)$-section of $f$. 
Let $(O)$ be the $(-1)$-section of $f$. 
Reducible fibres of $f$ are $F_0$ and $F_\infty$. 
The irreducible components $\Theta_0, \Theta_1, \ldots, \Theta_{12}$ satisfy the following: 
Firstly, the irreducible decompositions are 
\begin{align*}
F_0&=\Theta_{11}+\Theta_9+2\Theta_{10}+2\Theta_{12}\\
F_\infty&=\Theta_0+4\Theta_1+7\Theta_2+5\Theta_8+2\sum_{i=3}^7(8-i)\Theta_i. 
\end{align*}
Secondary, $\Theta_0$ is a $(-4)$-curve and $\Theta_{12}$ is a $(-1)$-{\it elliptic curve}, 
i.e.,\ an elliptic curve whose self-intersection number is $(-1)$. 
The others are $(-2)$-curves. 
Furthermore, $\Theta_{11}$ and $\Theta_0$ intersect with $(O)$. 
Finally, the dual graphs are as in Figure~\ref{fig:H1V3}. 
\begin{figure}[hbtp]
\begin{center}
\begin{picture}(255,38.8)
\setlength{\unitlength}{1.08358pt}
\put(-2,-10){\V}
\put(43,5){\hidariashiEllOne}
\put(50,-7){\makebox(0,0){$\scriptstyle 12$}}
%
%
\put(15,27.8){\makebox(0,0){$\scriptstyle 9$}}
\put(3.5,-7){\makebox(0,0){$\scriptstyle 11$}}
\put(26,-7){\makebox(0,0){$\scriptstyle 10$}}
\put(83,-10){\EightM}
%
%
\put(167.5,27.8){\makebox(0,0){$\scriptstyle 8$}}
\put(89,-7){\makebox(0,0){$\scriptstyle 0$}}
\put(111.5,-7){\makebox(0,0){$\scriptstyle 1$}}
\put(134,-7){\makebox(0,0){$\scriptstyle 2$}}
\put(156.5,-7){\makebox(0,0){$\scriptstyle 3$}}
\put(179,-7){\makebox(0,0){$\scriptstyle 4$}}
\put(201.5,-7){\makebox(0,0){$\scriptstyle 5$}}
\put(224,-7){\makebox(0,0){$\scriptstyle 6$}}
\put(246.5,-7){\makebox(0,0){$\scriptstyle 7$}}
%
%
\end{picture}
\caption{}\label{fig:H1V3}
\end{center}
\end{figure}
Here the numbers of the vertices corresponds to the suffixes 
of irreducible components. 
In particular, a double circle means that the corresponding component is an elliptic curve. 
The strict transform by $\sigma\circ\widetilde{\pi}:\widetilde{X}\to W^\natural$ of $\varDelta_\infty$ 
is the pull-back by $\varphi:\widetilde{X}\to X$ of a $(-1)$-curve. 
We denote by $e_8$ the $(-1)$-curve. 
Remark that $e_8.\Theta_{12}=e_8.\Theta_7=1$ and $e_8.F=2$.

Let $\upsilon_{11}:X\to X_{11}$ be the blow-down contracting $(O)$. 
Then $(\upsilon_{11})_*\Theta_{11}.(\upsilon_{11})_*F=1$ holds and 
$(\upsilon_{11})_*\Theta_{11}$ is a $(-1)$-curve. 
We take the composite $\upsilon_{10}:X\to X_{10}$ of $\upsilon_{11}$ and 
the blow-down $X_{11}\to X_{10}$ contracting $(\upsilon_{11})_*\Theta_{11}$. 
In this way, we have a birational morphism $\upsilon_8:X\to X_8$ 
contracting $(O)$, $\Theta_{11}$, $\Theta_{10}$ and $\Theta_9$, 
where $(X_8,(\upsilon_8)_*F)$ consists with the reduction of $(X,F)$. 
Remark that $(O)$, $\Theta_{11}$, $\Theta_{10}$ and $\Theta_9$ do not meet $e_8$. 
Let $\upsilon_7:X\to X_7$ be the composite of $\upsilon_8$ and 
the blow-down $X_{8}\to X_{7}$ contracting $(\upsilon_8)_*e_8$. 
Then $(\upsilon_7)_*\Theta_7.(\upsilon_7)_*F=2$ and $(\upsilon_7)_*\Theta_7$ is a $(-1)$-curve. 
Similarly, we define a birational morphism $\upsilon_i:X\to X_i$ 
as the composite of $\upsilon_{i+1}$ and the blow-down $X_{i+1}\to X_{i}$ 
contracting $(\upsilon_{i+1})_*\Theta_{i+1}$ for $i=6,5,\ldots,0$. 
Then $\upsilon_0:X\to X_0=\mathbb{P}^2$ is a birational morphism giving $(\mathrm{A})$ in Theorem~\ref{MT}. 
Furthermore, we have 
\begin{align*}
(O)=e_{12}, \ \ \ 
\Theta_0 &=\ell -e_1-e_9-e_{10}-e_{11}-e_{12}, \ \ \ 
\Theta_8 =\ell -e_1-e_2-e_3, 
\\
\Theta_{12}&=3\ell -e_1-e_2-\cdots -e_{10}, \ \ \ 
\Theta_i = e_i-e_{i+1}, \ \ \ i=1,2,\ldots, 7, 9, 10, 11,
\end{align*}
and $\mathrm{NS}(X)=\mathbb{Z}\ell\oplus(\bigoplus_{i=1}^{12}\mathbb{Z}e_i)$, 
where $\ell$ and $e_i$ denote the same in Theorem~\ref{MT}. 
In addition, an orthogonal decomposition of the N\'eron-Severi lattice $\mathfrak{NS}(X)$, 
that is, $\mathrm{NS}(X)$ equipped with the bilinear form 
which is $(-1)$ times of the intersection form, 
is as follows: 
\begin{align*}
\mathfrak{NS}(X)=(\mathbb{Z}F\oplus\mathbb{Z}(O))\oplus^\perp
(\mathbb{Z}\Theta_{9}\oplus\mathbb{Z}\Theta_{10}\oplus\mathbb{Z}\Theta_{12})\oplus^\perp
\left(\bigoplus_{i=1}^{8}\mathbb{Z}\Theta_i\right). 
\end{align*}
Here $\mathfrak{L} \oplus^\perp \mathfrak{M} \oplus^\perp \cdots \oplus^\perp \mathfrak{N}$ 
means that lattices $\mathfrak{L},\mathfrak{M},\ldots,\mathfrak{N}$ are orthogonal each other. 
Thus, Mordell-Weil group of $f$ is trivial from \cite[Theorem~1]{Shioda99}. 
\end{example}

Secondly, we take an example of $(\mathrm{B}1)$ in Theorem~\ref{MT}
with trivial Mordell-Weil group.

\begin{example}\label{Ex:BoneTrMWG}
Let $A$ be Zariski closure on $W^\natural$ of the divisor defined by 
\begin{align*}
x^5+tx^4+tx^3+t^2x+t^3=0, 
\end{align*}
which is irreducible. 
The singularities of $A$ are on $(0,0)$ and on $(\infty,\infty)$. 
Take the section $C$ of $pr_1$ defined by $x+t=0$. 
Both simple triple points $(0,0)$ and $(\infty,\infty)$ of $A$ 
have contacts of order four with $C$. 
Put $B^\natural=A+C+\varGamma_0+\varGamma_\infty$, 
which is linearly equivalent to $6\varDelta_0+6\varGamma$. 
In fact, the singularities of $B^\natural$ are 
of type $(\mathrm{V})$ on $\varGamma_0$ and of same type on $\varGamma_\infty$. 
We take the finite double cover of $W^\natural$ branched along $B^\natural$ 
and the canonical resolution. 
Then we also have $(\ref{eq:Rationality})$ 
since $B^\natural$ does not include any minimal section of $pr_1$. 
Therefore, we obtain a relatively minimal fibration $f:X\to\mathbb{P}^1$ of genus two 
on a rational surface with $(K_X+F)^2=2$.

The strict transform by $\sigma\circ\widetilde{\pi}:\widetilde{X}\to W^\natural$ of 
$C$ is the pull-back by $\varphi:\widetilde{X}\to X$ of a double $(-2)$-section of $f$. 
Let $(O)$ be the $(-2)$-section of $f$. 
Reducible fibres of $f$ are $F_0$ and $F_\infty$. 
Furthermore, the irreducible components $\Theta_0, \Theta_1, \ldots, \Theta_{11}$ 
satisfy the following: 
Firstly, the irreducible decompositions are 
\begin{align*}
F_0=\Theta_0 +\Theta_7 +2\sum_{i=8}^{11}\Theta_i, \ \ \
F_\infty =\Theta_1+\Theta_2+2\sum_{i=3}^6\Theta_i. 
\end{align*}
Secondary, $\Theta_6$ and $\Theta_{11}$ are $(-1)$-elliptic curves. 
The others are $(-2)$-curves. 
Furthermore, $\Theta_0$ and $\Theta_2$ intersect with $(O)$. 
Finally, the dual graphs are as in Figure~\ref{fig:H2VV}. 
\begin{figure}[hbtp]
\begin{center}
\begin{picture}(258,38.8)
\setlength{\unitlength}{1.08358pt}
\put(-2,-9){\V}
\put(43,6){\migite}
\put(65.5,6){\migite}
\put(87.8,6){\hidariashiEllOne}
%
%
\put(15,28.8){\makebox(0,0){$\scriptstyle 7$}}
\put(4,-6){\makebox(0,0){$\scriptstyle 0$}}
\put(26.5,-6){\makebox(0,0){$\scriptstyle 8$}}
\put(49,-6){\makebox(0,0){$\scriptstyle 9$}}
\put(71.5,-6){\makebox(0,0){$\scriptstyle 10$}}
\put(95.5,-6){\makebox(0,0){$\scriptstyle 11$}}
%
%
\put(133,-9){\V}
\put(178,6){\migite}
\put(200.5,6){\migite}
\put(222.8,6){\hidariashiEllOne}
%
%
\put(150,28.8){\makebox(0,0){$\scriptstyle 1$}}
\put(139,-6){\makebox(0,0){$\scriptstyle 2$}}
\put(161.5,-6){\makebox(0,0){$\scriptstyle 3$}}
\put(184,-6){\makebox(0,0){$\scriptstyle 4$}}
\put(206.5,-6){\makebox(0,0){$\scriptstyle 5$}}
\put(230.5,-6){\makebox(0,0){$\scriptstyle 6$}}
%
%
\end{picture}
\caption{}\label{fig:H2VV}
\end{center}
\end{figure}
Here the numbers of the vertices corresponds to the suffixes 
of irreducible components. 
The strict transform by $\sigma\circ\widetilde{\pi}:\widetilde{X}\to W^\natural$ of $\varDelta_0$ 
is the pull-back by $\varphi:\widetilde{X}\to X$ of a $(-1)$-curve. 
We denote by $e_{11}$ the $(-1)$-curve. 
That of $\varDelta_\infty$ is the pull-back by $\varphi:\widetilde{X}\to X$ of a $(-1)$-curve. 
Let $e_6$ be the $(-1)$-curve. 
Remark that $e_6.\Theta_{5}=e_{11}.\Theta_{10}=1$ and $e_6.F=e_{11}.F=2$.

Let $\upsilon_{10}:X\to X_{10}$ be the blow-down contracting $e_{11}$. 
Then $(\upsilon_{10})_*\Theta_{10}.(\upsilon_{10})_*F=2$ and 
$(\upsilon_{10})_*\Theta_{10}$ is a $(-1)$-curve. 
We take the composite $\upsilon_{9}:X\to X_{9}$ of $\upsilon_{10}$ and 
the blow-down $X_{10}\to X_9$ contracting $(\upsilon_{10})_*\Theta_{10}$. 
Similarly, we define a birational morphism $\upsilon_i:X\to X_i$ 
as the composite of $\upsilon_{i+1}$ and the blow-down $X_{i+1}\to X_i$ 
contracting $(\upsilon_{i+1})_*\Theta_{i+1}$ for $i=8,7,6$. 
Remark that $e_{11}$, $\Theta_{10}$, $\Theta_{9}$, $\Theta_{8}$ and $\Theta_{7}$ 
do not meet $e_6$. 
Let $\upsilon_{5}:X\to X_{5}$ be the composite of $\upsilon_{6}$ and 
the blow-down $X_{6}\to X_5$ contracting $(\upsilon_{6})_*e_{6}$. 
Furthermore, we also define a birational morphism $\upsilon_i:X\to X_i$ 
as the composite of $\upsilon_{i+1}$ and the blow-down $X_{i+1}\to X_i$ 
contracting $(\upsilon_{i+1})_*\Theta_{i+1}$ for $i=4,3,2,1$. 
Then $(\upsilon_1)_*(O).(\upsilon_1)_*F=3$ and $(\upsilon_1)_*(O)$ is a $(-1)$-curve. 
Hence, the composite $\upsilon_0:X\to\mathbb{P}^2$ of $\upsilon_{1}$ and 
the blow-down $X_1\to\mathbb{P}^2$ contracting $(\upsilon_{1})_*(O)$ 
is a birational morphism giving $(\mathrm{B}1)$ in Theorem~\ref{MT}. 
Furthermore, we have 
\begin{align*}
\Theta_0 &=\ell -e_1-e_7-e_8, \ \ \ \Theta_1 =\ell -e_1-e_2-e_3, \ \ \ 
\Theta_6 =3\ell -\sum_{i=1}^5e_i-\sum_{i=7}^{11}e_i, \\
\Theta_i &= e_i-e_{i+1}, \ \ \ i=2, 3, 4, 5, 7, 8, 9, 10, \ \ \ 
\Theta_{11} =3\ell -\sum_{i=1}^{10}e_i, \ \ \ 
(O)=e_1-e_2,
\end{align*}
where $\ell$ and $e_i$ denote the same in Theorem~\ref{MT}. 
In addition, we have an orthogonal decomposition 
\begin{align*}
\mathfrak{NS}(X)=(\mathbb{Z}F\oplus\mathbb{Z}(O))\oplus^\perp
\left(\bigoplus_{i=7}^{11}\mathbb{Z}\Theta_i\right)\oplus^\perp
\left(\mathbb{Z}\Theta_1\oplus\bigoplus_{i=3}^{6}\mathbb{Z}\Theta_i\right). 
\end{align*}
Thus, Mordell-Weil group of $f$ is trivial from \cite[Theorem~1]{Shioda99}. 
\end{example}

Next, we show an example of $(\mathrm{B}2)$ in Theorem~\ref{MT} 
with trivial Mordell-Weil group.

\begin{example}\label{Ex:BtwoTrMWG}
Let $A$ be Zariski closure on $W^\natural$ of the divisor defined by 
\begin{align*}
t^3x^5-3t^4x^4-3t^2x^4+6t^3x^3+3tx^3+3t^4x^2-3t^2x^2-x^2-3t^3x-t^4=0, 
\end{align*}
which is irreducible. 
The singularities of $A$ are on $(0,0)$ and on $(0,\infty)$. 
Let $C$ be the section of $pr_1$ defined by $tx-1=0$. 
The three-fold triple point $(0,\infty)$ of $A$ has 
a contact of order nine with $C$. 
Put $B^\natural=A+\varDelta_0$, which is linearly equivalent to $6\varDelta_0+4\varGamma$. 
In fact, the singularities of $B^\natural$ are of type $(\mathrm{II}_1)$ on $\varGamma_0$. 
We take the finite double cover of $W^\natural$ branched along $B^\natural$ 
and the canonical resolution. 
Since $B^\natural$ includes just one minimal section $\varDelta_0$ of $pr_1$, 
we also have $(\ref{eq:Rationality})$. 
Thus we obtain a relatively minimal fibration $f:X\to\mathbb{P}^1$ of genus two 
on a rational surface with $(K_X+F)^2=2$.

The strict transform by $\sigma\circ\widetilde{\pi}:\widetilde{X}\to W^\natural$ of $\varDelta_0$ 
is the pull-back by $\varphi:\widetilde{X}\to X$ of a double $(-1)$-section of $f$. 
Let $(O)$ be the $(-1)$-section of $f$. 
Now, $F_0$ is a unique reducible fibre of $f$, and 
the irreducible components $\Theta_0, \Theta_1, \ldots, \Theta_{10}$ satisfy the following: 
Firstly, the irreducible decomposition is 
\begin{align*}
F_0=\Theta_{10}+\Theta_9 +\sum_{i=3}^8(9-i)\Theta_i+4\Theta_2+2\Theta_1+3\Theta_0. 
\end{align*}
Secondary, $\Theta_8$ is a $(-3)$-curve and $\Theta_{10}$ is a $(-1)$-elliptic curve. 
The others are $(-2)$-curves. 
Furthermore, $\Theta_{10}$ intersects with $(O)$. 
Finally, the dual graph is as in Figure~\ref{fig:H2II1}. 
\begin{figure}[hbtp]
\begin{center}
\begin{picture}(238,38.8)
\setlength{\unitlength}{1.08358pt}
\put(0,5){\ellOne}
\put(22.5,5){\migite}
\put(45,5){\migitesan}
\put(67.5,-10){\Eight}
%
%
\put(6.1,-7){\makebox(0,0){$\scriptstyle 10$}}
\put(28.5,-7){\makebox(0,0){$\scriptstyle 9$}}
\put(51,-7){\makebox(0,0){$\scriptstyle 8$}}
\put(73.5,-7){\makebox(0,0){$\scriptstyle 7$}}
\put(96,-7){\makebox(0,0){$\scriptstyle 6$}}
\put(118.5,-7){\makebox(0,0){$\scriptstyle 5$}}
\put(141,-7){\makebox(0,0){$\scriptstyle 4$}}
\put(163.5,-7){\makebox(0,0){$\scriptstyle 3$}}
\put(186,-7){\makebox(0,0){$\scriptstyle 2$}}
\put(208.5,-7){\makebox(0,0){$\scriptstyle 1$}}
\put(175,27.8){\makebox(0,0){$\scriptstyle 0$}}
%
%

\end{picture}
\caption{}\label{fig:H2II1}
\end{center}
\end{figure}
Here the numbers of the vertices corresponds to the suffixes of irreducible components. 
The strict transform by $\sigma\circ\widetilde{\pi}:\widetilde{X}\to W^\natural$ of $\varDelta_\infty$ 
is the pull-back by $\varphi:\widetilde{X}\to X$ of a $(-1)$-curve. 
Let $e_{10}$ be the $(-1)$-curve. 
That of $C$ is the pull-back by $\varphi:\widetilde{X}\to X$ of a $(-2)$-curve. 
We denote by $\widehat{e_8}$ the $(-2)$-curve. 
Remark that 
$e_{10}.\Theta_{9}=e_{10}.\Theta_{8}=\widehat{e_8}.(O)=\widehat{e_8}.\Theta_{7}=1$ and 
$e_{10}.F=\widehat{e_8}.F=2$.

Let $\upsilon_{10}:X\to X_{10}$ be the blow-down contracting $(O)$. 
Since $(O)$ do not meet $e_{10}$, 
we take the composite $\upsilon_{9}:X\to X_{9}$ of $\upsilon_{10}$ and 
the blow-down $X_{10}\to X_{9}$ contracting $(\upsilon_{10})_*e_{10}$. 
Then $(\upsilon_9)_*\Theta_9.(\upsilon_9)_*F=2$ and $(\upsilon_9)_*\Theta_9$ is a $(-1)$-curve. 
We denote by $\upsilon_8:X\to X_8$ the composite of $\upsilon_9$ and 
the blow-down $X_{9}\to X_{8}$ contracting $(\upsilon_{9})_*\Theta_{9}$. 
Remark that $(\upsilon_8)_*\widehat{e_8}.(\upsilon_8)_*F=3$ and 
$(\upsilon_8)_*\widehat{e_8}$ is a $(-1)$-curve. 
Let $\upsilon_7:X\to X_7$ be the composite of $\upsilon_8$ and 
the blow-down $X_{8}\to X_{7}$ contracting $(\upsilon_8)_*\widehat{e_8}$. 
Similarly, we define a birational morphism $\upsilon_i:X\to X_i$ 
as the composite of $\upsilon_{i+1}$ and the blow-down $X_{i+1}\to X_{i}$ 
contracting $(\upsilon_{i+1})_*\Theta_{i+1}$ for $i=6,5,\ldots,0$. 
Then $\upsilon_0:X\to X_0=\mathbb{P}^2$ is a birational morphism giving $(\mathrm{B}2)$ in Theorem~\ref{MT}. 
Furthermore, we have 
\begin{align*}
(O)=e_{11}, \ \ \ 
\Theta_{10} &=3\ell -\sum_{i=1}^9e_i-e_{11}, \ \ \ 
\Theta_0 =\ell -e_1-e_2-e_3, 
\\
\Theta_{8} &=3\ell -\sum_{i=1}^7e_i-2e_9-e_{10}, 
\ \ \ 
\Theta_i = e_i-e_{i+1}, \ \ \ i=1,2,\ldots, 6, 7, 9, 
\end{align*}
where $\ell$ and $e_i$ denote the same in Theorem~\ref{MT}. 
Hence, we have an orthogonal decomposition 
\begin{align*}
\mathfrak{NS}(X)=(\mathbb{Z}F\oplus\mathbb{Z}(O))\oplus^\perp
\left(\bigoplus_{i=0}^{9}\mathbb{Z}\Theta_i\right). 
\end{align*}
Thus, Mordell-Weil group of $f$ is trivial from \cite[Theorem~1]{Shioda99}. 
\end{example}

In the last, we see an example of $(\mathrm{C})$ in Theorem~\ref{MT} 
with trivial Mordell-Weil group.

\begin{example}\label{Ex:CTrMWG}
Let $A$ be Zariski closure on $W^\natural$ of the divisor defined by 
\begin{align*}
x^5+t^2x^4-6tx^4+6t^2x^3+4tx^3-4t^3x^2-6t^2x^2+6t^3x-t^2x-t^4=0, 
\end{align*}
which is irreducible. 
The singularities of $A$ are on $(0,0)$, on $(1,1)$ and on $(\infty,\infty)$. 
The section $C$ of $pr_1$ defined by $x-t=0$ passes through 
$(0,0)$, $(1,1)$ and $(\infty,\infty)$. 
Put $B^\natural=A+C+\varGamma_0+\varGamma_1+\varGamma_\infty$, 
which is linearly equivalent to $6\varDelta_0+8\varGamma$. 
In fact, the singularities of $B^\natural$ are three of type $(\mathrm{V})$. 
They are on $\varGamma_0$, on $\varGamma_1$ and on $\varGamma_\infty$. 
We define sections $D_8$, $D_9$ and $D_{10}$ of $pr_2$ by the following:
\begin{align*}
D_8: x^2-2x+t=0, \ \ \ 
D_9: x^2-t=0, \ \ \ 
D_{10}: x^2-2tx+t=0.
\end{align*}
They are linearly equivalent to $2\varDelta_0+\varGamma$, 
and pass through $(0,0)$, $(1,1)$ and $(\infty,\infty)$. 
Furthermore, $D_8$ is tangent to $\varGamma_1$ and to $\varGamma_\infty$. 
In fact, $D_8$ is a unique curve having such properties. 
However, it meets $\varGamma_0$ transversally. 
Therefore, $(\ref{eq:Rationality})$ also holds when 
we take the finite double cover of $W^\natural$ branched along $B^\natural$ 
and the canonical resolution. 
Hence, we obtain a relatively minimal fibration $f:X\to\mathbb{P}^1$ of genus two 
on a rational surface with $(K_X+F)^2=3$. 
Indeed, $D_9$ meets $\varGamma_1$ transversally, 
though it is also tangent to $\varGamma_0$ and to $\varGamma_\infty$. 
Similarly, $D_{10}$ is tangent to $\varGamma_0$ and to $\varGamma_1$, 
though it meets $\varGamma_\infty$ transversally.

The strict transform by $\sigma\circ\widetilde{\pi}:\widetilde{X}\to W^\natural$ of 
$C$ is the pull-back by $\varphi:\widetilde{X}\to X$ of a double $(-2)$-section of $f$. 
Let $(O)$ be the $(-2)$-section of $f$. 
Reducible fibres of $f$ are $F_0$, $F_1$ and $F_\infty$. 
Furthermore, the irreducible components $\Theta_2, \Theta_3, \ldots, \Theta_{13}$ 
satisfy the following: 
Firstly, the irreducible decompositions are 
\begin{align*}
F_0&=\Theta_2 +\Theta_{11} +2\Theta_5 +2\Theta_8, \\
F_1&=\Theta_3 +\Theta_{12} +2\Theta_6 +2\Theta_9, \ \ \
F_\infty =\Theta_4 +\Theta_{13} +2\Theta_7 +2\Theta_{10}. 
\end{align*}
Secondary, $\Theta_8$, $\Theta_9$ and $\Theta_{10}$ are $(-1)$-elliptic curves. 
The others are $(-2)$-curves. 
Furthermore, $\Theta_2$, $\Theta_3$ and $\Theta_4$ intersect with $(O)$. 
Finally, the dual graphs are as in Figure~\ref{fig:H3VVV}. 
\begin{figure}[hbtp]
\begin{center}
\begin{picture}(258,38.8)
\setlength{\unitlength}{1.08358pt}
\put(-2,-9){\V}
\put(42.5,6){\hidariashiEllOne}
%
%
\put(15,28.8){\makebox(0,0){$\scriptstyle 11$}}
\put(4,-6){\makebox(0,0){$\scriptstyle 2$}}
\put(26.5,-6){\makebox(0,0){$\scriptstyle 5$}}
\put(50,-6){\makebox(0,0){$\scriptstyle 8$}}
%
%
\put(86,-9){\V}
\put(130.5,6){\hidariashiEllOne}
%
%
\put(103,28.8){\makebox(0,0){$\scriptstyle 12$}}
\put(92,-6){\makebox(0,0){$\scriptstyle 3$}}
\put(114.5,-6){\makebox(0,0){$\scriptstyle 6$}}
\put(138,-6){\makebox(0,0){$\scriptstyle 9$}}
%
%
\put(174,-9){\V}
\put(218.5,6){\hidariashiEllOne}
%
%
\put(191,28.8){\makebox(0,0){$\scriptstyle 13$}}
\put(180,-6){\makebox(0,0){$\scriptstyle 4$}}
\put(202.5,-6){\makebox(0,0){$\scriptstyle 7$}}
\put(226,-6){\makebox(0,0){$\scriptstyle 10$}}
%
%
\end{picture}
\caption{}\label{fig:H3VVV}
\end{center}
\end{figure}
Here the numbers of the vertices corresponds to the suffixes of irreducible components. 
For $i=8,9,10$, 
the strict transform by $\sigma\circ\widetilde{\pi}:\widetilde{X}\to W^\natural$ of $D_i$ 
is the pull-back by $\varphi:\widetilde{X}\to X$ of a $(-1)$-curve. 
We denote by $e_{i}$ the $(-1)$-curve for $i=8,9,10$. 
That of $A$ is the pull-back by $\varphi:\widetilde{X}\to X$ of a double $(-1)$-curve. 
We denote by $e_1$ the $(-1)$-curve. 
Remark that 
\begin{align*}
&e_8.\Theta_{5}=e_9.\Theta_{6}=e_{10}.\Theta_{7}=e_1.\Theta_{11}=e_1.\Theta_{12}=e_1.\Theta_{13}=1, \\
&e_1.\Theta_{8}=e_1.\Theta_{9}=e_1.\Theta_{10}=2, \ \ \ e_8.F=e_9.F=e_{10}.F=4
\end{align*}
and $e_1.F=5$.

Since $e_8$, $e_9$ and $e_{10}$ are three disjoint $(-1)$-curves, 
we define a birational morphism $\upsilon_7:X\to X_7$ 
as the composite of the blow-downs which contract them. 
Then $(\upsilon_{7})_*\Theta_{7}.(\upsilon_{7})_*F=4$ and 
$(\upsilon_{7})_*\Theta_{7}$ is a $(-1)$-curve. 
We take the composite $\upsilon_{6}:X\to X_{6}$ of $\upsilon_{7}$ and 
the blow-down $X_{7}\to X_6$ contracting $(\upsilon_{7})_*\Theta_{7}$. 
Similarly, we define a birational morphism $\upsilon_i:X\to X_i$ 
as the composite of $\upsilon_{i+1}$ and the blow-down $X_{i+1}\to X_i$ 
contracting $(\upsilon_{i+1})_*\Theta_{i+1}$ for $i=5,4,\ldots,1$. 
Remark that $\Theta_{2}, \Theta_{3}, \ldots, \Theta_{7}$, $e_{8}$, $e_{9}$ and $e_{10}$ 
do not meet $e_1$. 
Hence, the composite $\upsilon_0:X\to\mathbb{P}^2$ of $\upsilon_{1}$ and 
the blow-down $X_1\to\mathbb{P}^2$ contracting $(\upsilon_{1})_*e_1$ 
is a birational morphism giving $(\mathrm{C})$ in Theorem~\ref{MT}. 
Furthermore, we have 
\begin{align*}
\Theta_i &= e_i-e_{i+3}, \ \ \ i=2, 3, \ldots, 7, \ \ \ \ \ 
\Theta_i =6\ell -2\sum_{j=1}^{10}e_j+e_i, \ \ \ i=8,9,10, \ \ \ 
\\
\Theta_{i+9} &=\ell -e_1-e_i-e_{i+3}, \ \ \ i=2,3,4, \ \ \ \ \ 
(O)=\ell -e_2-e_3-e_4,
\end{align*}
where $\ell$ and $e_i$ denote the same in Theorem~\ref{MT}. 
Therefore, we have an orthogonal decomposition 
\begin{align*}
\mathfrak{NS}(X)=
(\mathbb{Z}F\oplus\mathbb{Z}(O))\oplus^\perp
&(\mathbb{Z}\Theta_5\oplus\mathbb{Z}\Theta_8\oplus\mathbb{Z}\Theta_{11})
\\
\oplus^\perp
&(\mathbb{Z}\Theta_6\oplus\mathbb{Z}\Theta_9\oplus\mathbb{Z}\Theta_{12})\oplus^\perp
(\mathbb{Z}\Theta_7\oplus\mathbb{Z}\Theta_{10}\oplus\mathbb{Z}\Theta_{13}). 
\end{align*}
Thus, Mordell-Weil group of $f$ is trivial from \cite[Theorem~1]{Shioda99}. 
\end{example}

\smallskip

This completes the proof of Theorem~\ref{Thm:Ex}. 
\hspace*{\fill}$\square$

%
%
%
%

\begin{remark}\label{ABM}
Keep the situation and the notation as in Example~\ref{Ex:CTrMWG}. 
Remark that $(\upsilon_3)_*\Theta_{12}$, $(\upsilon_3)_*\Theta_{11}$ and $(\upsilon_3)_*(O)$ 
are three disjoint $(-1)$-curves on $X_3$. 
Hence, we obtain another birational morphism $\upsilon_0':X\to\mathbb{P}^2$ 
from $\upsilon_3:X \to X_3$ by contracting them. 
Then $\deg\upsilon_0'(F)=13$ and singularities of $\upsilon_0'(F)$ are the same as $\upsilon_0(F)$'s. 
In fact, $\upsilon_0'\circ(\upsilon_0)^{-1}: \mathbb{P}^2\dashrightarrow\mathbb{P}^2$ is the Cremona 
transformation at three points $\upsilon_0(e_1)$, $\upsilon_0(e_2)$ and $\upsilon_0(e_3)$. 
\end{remark}

\begin{remark}\label{Rmk:SINsec}
Although the self-intersection number of any section of the genus two fibrations in 
Examples~\ref{Ex:Bonemts} and \ref{Ex:Cmts} is at most $(-3)$, 
that of the unique sections of the ones in Examples~\ref{Ex:BoneTrMWG} and \ref{Ex:CTrMWG} 
is equal to $(-2)$. 
In contrast, $f:X\to\mathbb{P}^2$ always has a $(-1)$-section 
when $\upsilon_0:X\to\mathbb{P}^2$ gives $(\mathrm{A})$ or $(\mathrm{B}2)$. 
\end{remark}

%
%

\bigskip
\address{
General Education, \\
Gifu National College of Technology, \\
Kamimakuwa 2236-2, Motosu, \\ 
Gifu 501-0495, Japan
}
{kit058shiny@gifu-nct.ac.jp}
\end{document}